\numberwithin{equation}{section}							
\newtheorem*{acknowledgement}{Acknowledgement}
\newtheorem*{main}{Main Theorem}
\newtheorem{thm}{Theorem}[section]
\newtheorem{prop}[thm]{Proposition}
\newtheorem{lem}[thm]{Lemma}
\newtheorem{cor}[thm]{Corollary}
\newtheorem{rem}[thm]{Remark}
\newtheorem{defn}[thm]{Definition}
\crefname{thm}{Theorem}{Theorems}						
\crefname{main}{Main Theorem}{Main Theorems}			
\crefname{lem}{Lemma}{Lemmas}							
\crefname{prop}{Proposition}{Propositions}			
\crefname{cor}{Corollary}{Corollaries}				
\crefname{ineq}{inequality}{inequalities}			
\crefname{cond}{condition}{conditions}				
\renewcommand*{\eqref}[1]{\hyperref[{#1}]{\textup{\tagform@{\ref*{#1}}}}}	
\let\originalleft\left
\let\originalright\right
\renewcommand{\left}{\mathopen{}\mathclose\bgroup\originalleft}
\renewcommand{\right}{\aftergroup\egroup\originalright}
\newcommand{\N}{\bN}
\newcommand{\Z}{\bZ}
\renewcommand{\O}{{\rm O}}
\newcommand{\SO}{{\rm SO}}
\newcommand{\SU}{{\rm SU}}
\newcommand{\U}{{\rm U}}
\newcommand{\dist}{{\rm dist}}
\newcommand{\Ad}{\mathrm{Ad}}
\newcommand{\Aut}{\mathrm{Aut}}
\renewcommand{\epsilon}{\varepsilon}
\newcommand{\Isom}{\mathrm{Isom}}
\newcommand{\Ric}{{\rm Ric}}
\newcommand{\YM}{{\rm YM}}
\newcommand{\YMH}{{\rm YMH}}
\newcommand{\del}{\partial}
\newcommand{\diag}{\mathrm{diag}}
\newcommand{\dvol}{\mathop\mathrm{dvol}\nolimits}
\def\<{\mathopen{}\left<}
\def\>{\right>\mathclose{}}
\def\({\mathopen{}(}
\def\){)\mathclose{}}
\def\cx{\mathbb{C}}
\def\rl{\mathbb{R}}
\def\N{\mathbb{N}}
\def\Z{\mathbb{Z}}
\def\cE{\mathcal{E}}
\def\cF{\mathcal{F}}
\def\cL{\mathcal{L}}
\def\cM{\mathcal{M}}
\def\cN{\mathcal{N}}
\def\cO{\mathcal{O}}
\def\Ad{\mathrm{Ad}}
\def\Aut{\mathrm{Aut}}
\def\Isom{\mathrm{Isom}}
\def\SO{\mathrm{SO}}
\def\Vol{\mathrm{Vol}}
\def\delbar{\overline{\partial}} 		
\newtheorem{lemma}{Lemma}
\newtheorem{proposition}{Proposition}
\newtheorem{remark}{Remark}
\numberwithin{equation}{section}
\def\thetitle{From vortices to instantons on the Euclidean Schwarzschild manifold}
\def\theauthors{\'Akos Nagy and Gon\c{c}alo Oliveira}
\def\thekeywords{Euclidean Schwarzschild manifold, instantons, vortices, Kazdan--Warner equation}
\title{\thetitle}
\author{\theauthors}
\date{\today}
\keywords{\thekeywords}
\subjclass[2010]{53C07,58D27,70S15,83C57}
\address[\'Akos Nagy]{Duke University, Durham, NC 27708-0320}
\email{\href{mailto:contact@akosnagy.com}{contact@akosnagy.com}}
\urladdr{\href{https://akosnagy.com/}{akosnagy.com}}
\address[Gon\c calo Oliveira]{IMPA, Estrada Dona Castorina, 110\\
Jardim Botânico | CEP 22460-320\\
Rio de Janeiro, RJ - Brasil }
\email{\href{mailto:goliveira@impa.br}{goliveira@impa.br}}
\urladdr{\href{http://w3.impa.br/~goliveira/}{w3.impa.br/~goliveira}}
\begin{document}

\begin{abstract}
The first irreducible solution of the $\mathrm{SU} (2)$ anti-self-duality equations on the Euclidean Schwarzschild (ES) manifold was found by Charap and Duff in 1977, only 2 years later than the famous BPST instantons on $\mathbb{R}^4$ were discovered. While soon after, in 1978, the ADHM construction gave a complete description of the moduli spaces of instantons on $\mathbb{R}^4$, the case of the ES manifold has resisted many efforts for the past 40 years.

By exploring a correspondence between the planar Abelian vortices and spherically symmetric instantons on the ES manifold, we obtain: a complete description of a connected component of the moduli space of unit energy $\mathrm{SU} (2)$ instantons; new examples of instantons with non-integer energy and non-trivial holonomy at infinity; a complete classification of finite energy, spherically symmetric, $\mathrm{SU} (2)$ instantons.

As opposed to the previously known solutions, the generic instanton coming from our construction is not invariant under the full isometry group, in particular not static. Hence disproving a conjecture of Tekin.
\end{abstract}

\maketitle

\section{Introduction}

\subsection*{Motivation and Background}

Let $G$ be a compact semisimple Lie group and $P$ a principal $G$-bundle over an oriented, Riemannian 4-manifold $(X^4, g_X)$. The Hodge-$*$ operator associated to the metric $g_X$, acting on 2-forms satisfies $\ast^2 =1$. Hence the bundle of 2-forms splits into $\pm 1$ eigenspaces for $\ast$ as $\Lambda_X^2 \cong \Lambda_X^+ \oplus \Lambda_X^-$. Given a connection $D$ on $P$, its curvature $\cF_D$ is a 2-form with values in the adjoint bundle $\mathfrak{g}_P$, that is a section of $\Lambda_X^2 \otimes \mathfrak{g}_P$. Hence, it decomposes as $\cF_D= \cF_D^+ + \cF_D^-$, with $\cF_D^\pm$ taking values in $\Lambda^\pm_X \otimes \mathfrak{g}_P$. A connection $D$ is called {\em anti-self-dual} (ASD) if
\begin{equation}
\cF_D^+ = 0.
\end{equation}
{\em Instantons} are anti-self-dual connections with finite Yang--Mills energy 
\begin{equation}
E_{\YM}(D) = \frac{1}{8 \pi^2} \int\limits_X \vert \cF_D \vert^2 \dvol_X,  \label{eq:YM-energy}
\end{equation}
where $\vert \cdot \vert$ is computed using the Riemannian metric on $\Lambda_X^\pm$ and an $\Ad$-invariant inner product on $\mathfrak{g}_P$.  Instantons have since long been of great interest to physicists and they have also been extensively studied by mathematicians for more than 4 decades now \cite{BPST75}.

\smallskip

When the underlying manifold $X$ is closed, instantons have quantized energy, determined by characteristic classes.  In fact, instantons are global minimizers for $E_{\YM}$ on a fixed principal $G$-bundle.  Their moduli spaces are finite dimensional and non-compact with well-understood compactifications; cf. \cites{U82a,W04}.

\smallskip

When $X$ is non-compact, the theory is much less understood. On {\em asymptotically locally Euclidean} (ALE) {\em gravitational instantons}, the energy is also quantized. For example, in the case of the Euclidean $\rl^4$, the energy is, roughly speaking, classified $\pi_3 (\SU (2)) \cong \Z$, and can be thought of as a winding number determined by the instanton on the ``sphere at infinity''. For the general ALE gravitational instanton, the instanton moduli space can also be described via the generalized ADHM construction; cf. \cites{ADHM78,Kronheimer1990}. In particular, the moduli spaces are finite dimensional, non-compact, complete, and hyper-K\"ahler.

\smallskip

Another important class of Ricci-flat, non-compact, but complete manifolds are the {\em asymptotically locally flat} (ALF) spaces. In comparison to ALE manifolds, which have quartic volume growth, an ALF manifold has cubic volume growth, being asymptotic to circle bundle over a $3$-dimensional cone and constant length fibres. When these spaces are also gravitational instantons, their instanton moduli spaces are usually studied using tools from hyper-K\"ahler geometry. The construction of such instanton moduli spaces on ALF gravitational instantons is an active area of research and significant progress has been made recently using Cherkis' bow construction \cites{C11,Cherkis2016}. One further remark is that in this case the energy need not be quantized, as exemplified for example in \cite{E13}*{Section~4}.

\smallskip

However, there are important ALF manifolds, which are not hyper-K\"ahler. Probably the most well-known examples are ``Wick rotated black hole metrics'', the {\em Euclidean Schwarzschild} (ES) manifold, and its generalizations (e.g. the Euclidean Kerr manifold).  These spaces first appeared in the work of Hawking on black hole thermodynamics in Euclidean quantum gravity \cite{H77}. The ES and Kerr metric on $\mathbb{R}^2 \times S^2$ are some of the few known examples of Ricci flat, complete, but non-compact 4-manifolds which are not hyper-K\"ahler; cf. \cites{Chen2010,Chen2011}\footnote{We thank Hans Joachim Hein for bringing these references to our attention.}. However, Yang--Mills theory on the ES manifold has remained quite unexplored.

\smallskip

In \cite{CD77}, Charap and Duff used a fully isometric ansatz to produce two types of instantons. The first is a single, irreducible, and unit energy solution, which can be described as the spin connection on the anti-chiral spinor bundle of the ES manifold. Thus it is a particular instance of a result due to Atiyah, Hitchin, and Singer on Einstein manifolds \cite{AHS78}*{Proposition~2.2}. The second type of solutions found by Charap and Duff, turned out to be reducible to $\U (1) \cong \mathrm{S}(\U(1) \times \U(1)) \subset \SU (2)$, as shown by Etesi and Hausel \cite{EH01}. These form a family indexed by an integer, $n$, have energy $2 n^2$, and give rise to all $\U (1)$ instantons on the ES manifold. In other words, these are the only ASD and finite energy solutions of the Euclidean Maxwell equation.

In \cite{EJ08}, making use of Gromov--Lawson relative index theorem and the Hausel--Hunsicker--Mazzeo compactification of ALF spaces (cf. \cite{HHM04}), Etesi and Jardim computed the dimension of moduli spaces of certain instantons on ALF spaces. These instantons are required to have strictly faster than quadratic curvature decay on the ES manifold.  This condition guarantees integer energy for the instantons by \cite{E13}. In particular, the moduli space of unit energy instantons on the ES manifold is 2-dimensional, and non-empty, due to the example Charap and Duff.

In \cite{MT09}, Mosna and Tavares showed, using numerical simulations, that unlike the in compact or ALE cases, the ES manifold carries irreducible $\SU (2)$ instantons of non-integer energy. Their work (numerically) shows the existence of an $\SU (2)$ instanton for each energy value $E \in [1,2]$, interpolating from the irreducible Charap--Duff instanton $(E = 1)$, to the (unique) reducible instanton of energy $E = 2$. These instantons and one other family with finite energy were also studied in \cites{Radu2006,Radu2008}. All of these instantons are completely invariant under the action of the group of orientation preserving isometries, $\Isom_+ (X, g_X) \cong \mathrm{S} (\O (2) \times \O (3))$, so they are both static and spherically symmetric.

\smallskip

Despite many efforts, e.g. \cites{T02,Radu2006,EJ08,MT09,E13}, still, very little is known about irreducible, non-Abelian instantons on the ES manifold.

\medskip

\subsection*{Main Results}

First we recast a correspondence between spherically symmetric $\SU (2)$ instantons and planar Abelian vortices.  This method was first introduced by Witten in \cite{W77}, and subsequently studied by Taubes and Garcia-Prada \cites{T80,GP93,GP94}.  However, they worked with different geometries and with somewhat different applications in mind. In this paper, we prove the following results on the ES manifold:

\begin{itemize}
\item A component of the unit energy Etesi--Jardim moduli space is canonically diffeomorphic to $\cx \cong \rl^2$. The Charap--Duff instanton being the origin of $\cx$.

\item Any irreducible, spherically symmetric $\SU (2)$ instantons must have energy $E \in (0,2)$.  We give a complete classification of these instantons and their moduli spaces.  These moduli spaces are canonically homeomorphic to each other for energies in either $(0,1)$ or $[1,2)$.

\item Uhlenbeck compactness:  For each energy $E \in (0, 2)$, the moduli space of energy $\cE$, irreducible, spherically symmetric $\SU (2)$ instantons is compact in the weak $L_1^2$-topology.

In particular, this is true for the full moduli space of unit energy instantons. Thus, in that case the compactification is homeomorphic to $S^2 \cong \cx \cup \lbrace \infty \rbrace$, hence closed. We also remark here that the holonomy around the ``circle at infinity'' in the imaginary time direction is $ \diag(\exp (2 \pi E i), \exp (-2 \pi E i)) \in \mathrm{S}(\U(1) \times \U(1)) \subset \SU (2)$, hence trivial for integer energies.
\end{itemize} 

\smallskip

While these instantons are all spherically symmetric, the generic one is not static, thus not completely invariant under the full group of orientation preserving isometries, $\Isom_+ (X, g_X) \cong \mathrm{S} (\O (2) \times \O (3))$. In particular, our analysis recovers all previously known instantons on the ES manifold (the Mosna--Tavares/Radu--Tchrakian--Yang solutions), but also another family of fully isometric, irreducible solutions, which are indexed by energy values $E \in (0,2)$. These solutions have exactly quadratic curvature decay, and the unit energy one corresponds to the Uhlenbeck compactification point of the Etesi--Jardim moduli space, as described above. We summarize these results in the \hyperlink{main:sch}{Main Theorem}.

\smallskip

For the rest of the paper, let $\cM_{X,E}^{\SO (3)}$ be the moduli space of spherically symmetric $\SU (2)$ instantons of energy $\cE$ on the ES manifold, $X$, and $(\cM_{X,E}^{\SO (3)})^* \subseteq \cM_{X,E}^{\SO (3)}$ be the subspace of irreducible instantons.

\smallskip

\begin{main}
\hypertarget{main:sch}
Irreducible, spherically symmetric $\SU (2)$ instantons on the ES manifold must have energy $E \in (0, 2)$, and for these energy values we have
\begin{equation}
(\cM_{X,E}^{\SO (3)})^* = \cM_{X,E}^{\SO (3)}.
\end{equation}
Furthermore, we have following descriptions these moduli spaces:
\begin{enumerate}
\item For each $E \in (0,1)$, $\cM_{X,E}^{\SO (3)}$ has 1 element, which is invariant under the action of $\Isom_+ (X, g_X)$.

\item For each $E \in [1,2)$, the moduli spaces are canonically homeomorphic to $S^2 = \cx \cup \{ \infty \}$ in the weak $L_1^2$-topology.  The action of $\Isom_+ (X, g_X)$ on $\cM_{X,E}^{\SO (3)}$ has exactly two fixed points, $0, \infty \in S^2$.

\end{enumerate}
Moreover, instantons with unit energy corresponding to $\cx \subset \cM_{X,1}^{\SO (3)}$ have exactly cubic curvature decay, all other irreducible, spherically symmetric $\SU (2)$ have exactly quadratic curvature decay.
\end{main} 

\smallskip

In the theorems above, and for $E\in [1,2)$, we mention the existence of a canonical homeomorphism (in the weak $L^2_1$-topology)
\begin{equation}
c: \cM_{X,E}^{\SO(3)} \rightarrow \cx \cup \lbrace \infty \rbrace = S^2,
\end{equation}
and it is defined as follows:  To each spherically symmetric instanton we associate a vortex field $(\nabla_D, \Phi_D)$ on $\Sigma \cong \cx$ equipped with a certain complete, finite volume metric.  The only gauge invariant ``quantity'' one vortex fields have are divisors of the $\Phi$ fields. In our case, a generalized version of Bradlow's condition \cite{B90}*{Equation~4.10} implies that divisor has to have either degree 0, or 1.  Then the map $c$ simply sends the instanton $D$ to $\infty$, when the degree is zero, and to the zero of $\Phi_D$ in $\Sigma \cong \cx$, when the degree is 1.

\begin{rem}
\begin{enumerate}
\item As mentioned in the previous discussion, for $E \in [1,2)$ the two parameters of the moduli correspond, via $c$, to the ``location'' of the vortex $(\nabla_D, \Phi_D)$ associated with an instanton $D$.  One of the coordinates corresponds to the ``imaginary time'' location.  In contrast to the case of the BPST instantons on $\rl^4$, the other coordinate is not a ``concentration'' scale for bubbling.  Intuitively speaking bubbling for instantons would happen at points, but since all the instantons we construct are spherically symmetric and there is no $\SO(3)$-orbit in the ES manifold which is a point, there can be no bubbling.

\item In \cite{EJ08} it was proven that the moduli space $\cM_{X,1}$ of unit energy $\SU (2)$ instantons on the ES manifold is $2$-dimensional. As a consequence, our main theorem above shows that a connected component of $\cM_{X,1}$ is canonically homeomorphic to $S^2 = \cx \cup \{ \infty \}$.
\end{enumerate}
\end{rem}

\medskip

\subsection*{Connections to Physics}

In this short section we lay out how our work ties into modern Theoretical Particle Physics.

\smallskip

The existence of instantons on $\rl^4$ has deep consequences for quantum Yang--Mills theory on Minkowski space \cites{T1976,Jackiw1976,Callan1976}. Indeed it implies that the vacuum is not unique, and instantons can be interpreted as fields configurations that tunnel between the different vacua.

Such an interpretation on more general manifolds requires the instantons to not be static.  A theorem of Birkhoff states that a complete, spherically symmetric solution of the vacuum Einstein equations must be isometric to the Schwarzschild spacetime, and thus, static; cf. \cite{BL23}. More generally, a spherically symmetric solution of the coupled Einstein--Maxwell equations must be equivalent to the Reissner--Nordstr\"om electro-vacuum.  These result stay true in the Euclidean setting as well.  A very broad generalization of Birkhoff's theorem (to arbitrary dimensions and signatures) can be found in \cite{BM95}. Since a connection is an instanton if and only if it has vanishing stress-energy tensor, this implies that the only Ricci flat, complete, and spherically symmetric solution of the Euclidean Einstein--Maxwell equations must be one of the reducible Charap--Duff instantons, or anti-instantons on the ES manifold.

These results, together with fact that the Charap--Duff instantons are static and the difficulty in finding further examples of instantons led to the conjecture that such non-static instantons did not exist \cite{T02}.

In the light of the above observations, our results can be interpreted as the existence of tunneling effects in the quantum Einstein--Yang--Mills theory (at finite temperature), and the invalidity of Birkhoff's theorem in the non-Abelian setting.  This disproves a conjecture of Tekin \cite{T02}.

\medskip

\subsection*{Organization of the paper}

In \Cref{sec:vorinstdual} we recast the correspondence between spherically symmetric instantons on $\Sigma \times S^2$ and planar Abelian vortices on $\Sigma$ following mainly Garcia-Prada \cite{GP94}. In \Cref{sec:Kazdan--Warner} we develop the required technical results on the Kazdan--Warner equation. The main result of that section is \Cref{thm:Intro_Main_2}, whose generality is needed for our application. In fact, this is the first fully elliptic treatment of such a general existence theorem. In \Cref{sec:vortexpart2} we introduce the basics of the theory of Abelian vortices on non-compact, complete, finite volume Riemann surfaces. In particular we give an existence theorem for vortices using the results of \Cref{sec:Kazdan--Warner}. Many of the results in \Cref{sec:Kazdan--Warner,sec:vortexpart2} are developed with a little more generality than what is needed later, with possible further applications in mind. Finally in \Cref{sec:ES} we apply these results to prove our results for instantons on the ES manifold.

\smallskip

\begin{acknowledgement}
The authors are grateful for G\'abor Etesi and Mark Stern for helpful discussions. We thank Andy Royston for explaining to us the physical implications of our results and Aleksander Doan for comments on an earlier version of this manuscript.  The first author thanks the Fields Institute for hosting him during the early stages of this project.  The second author wishes to thank Sergey Cherkis and Marcos Jardim for discussions regarding instantons on ALF manifolds.
\end{acknowledgement}

\bigskip

\section{From instantons to vortices}
\label{sec:vorinstdual}

In this section, following the ideas of \cites{W77,T80,GP94}, we outline the correspondence between Abelian vortices in dimension 2 and spherically symmetric instantons in dimension 4.  This technique has been extensively studied recently for product manifolds $S^2 \times \Sigma$, where $\Sigma$ is a hyperbolic surface, in \cite{P12}.
\smallskip

For the rest of the paper, let $\Sigma$ be an oriented, complete, finite volume surface with the Riemannian metric $g_\Sigma$.  The corresponding volume form on $\Sigma$ is denoted by $\dvol_\Sigma$.  Note that $\dvol_\Sigma$ is also the K\"ahler form of the metric $g_\Sigma$.  Similarly let $g_{S^2}$ be the round metric on $S^2$ of volume $2 \pi$ and $\dvol_{S^2}$ the corresponding volume/K\"ahler form.

\smallskip

Let $(X, g_X) = (\Sigma \times S^2, g_\Sigma \oplus g_{S^2})$ be the product K\"ahler manifold, $p_\Sigma$ and $p_{S^2}$ be the projections to $\Sigma$ and $S^2$, respectively, and $\Lambda_X : \Lambda^* \rightarrow \Lambda^{* - 2}$ to be the contraction by the corresponding K\"ahler form, $\omega_X$.  Now we have the canonical inclusion $\SO (3) \xhookrightarrow{} \Isom (S^2, g_{S^2}) \xhookrightarrow{} \Isom (X, g_X)$, and we call $\SO (3)$ invariant objects {\em spherically symmetric}.  Since $\SO (3)$ is connected, the pullback of any smooth bundle over $X$ via an element of $\SO (3)$ is always homotopic, hence isomorphic (gauge equivalent) to the original bundle.  Thus we can always talk about connections whose gauge equivalence class is spherically symmetric.  By an abuse of notation, we also call such connection spherically symmetric.

\smallskip

Since $(X, g_X)$ is K\"ahler, the equation $\cF_D^+ = 0$ is equivalent to the equations (cf. \cite{Donaldson1990}*{Proposition~2.1.59})
\begin{align}
\Lambda_X \cF_D &= 0,  \\
\cF_D^{0, 2} &= 0.
\end{align}
In particular, $D^{0,1} = \delbar_D$ defines a holomorphic structure on any induce complex vector bundle.

\smallskip

The following lemma is a special case of \cite{GP94}*{Propositions~3.1,~3.2,~3.4,~and~3.5}.

\begin{lem}
\label{lem:invconnection}
Let $D$ be an spherically symmetric $\SU (2)$ connection on the vector bundle $\cE$ over $X$ with $\cF_D^{0, 2} = 0$ with compatible Hermitian structure $H_{\cE}$.  Then $D$ induces a holomorphic structure on $\cE$, and there are
\begin{itemize}
	\item a holomorphic Hermitian line bundle $\cL \rightarrow \Sigma$ with Chern connection $\nabla^{\cL}$,
	\item a holomorphic section $\Phi \in H^0 \left( \cL^2 \right)$,
	\item a non-negative integer $n$,
	\item and an $\SO (3)$-invariant section, $\alpha_n$ of $\Lambda_{S^2}^1 \otimes \cO (-2n)$,
\end{itemize}
such that $\cE$ has the orthogonal holomorphic decomposition
\begin{equation}
\cE \cong (p_\Sigma^* (\cL) \otimes p_{S^2}^* (\cO (- n))) \oplus (p_\Sigma^* (\cL^{-1}) \otimes p_{S^2}^* (\cO (n))),  \label{eq:invbundle}
\end{equation}
and if $\nabla^{\cO (n)}$ is the Chern connection of $\cO (n)$, then there is a global gauge in which
\begin{equation}
D = \left( \begin{array}{cc}
p_\Sigma^* (\nabla^{\cL}) \otimes p_{S^2}^* (\nabla^{\cO (-n)}) & \tfrac{1}{2} p_\Sigma^* (\Phi) \otimes p_{S^2}^* (\alpha) \\
- \tfrac{1}{2} (p_\Sigma^* (\Phi) \otimes p_{S^2}^* (\alpha))^* &  p_\Sigma^* (\nabla^{\cL^{-1}}) \otimes p_{S^2}^* (\nabla^{\cO (n)})
\end{array} \right),  \label{eq:invconnection}
\end{equation}
where $\nabla^{\cL^{-1}}$ is the Chern connection of $\cL^{-1}$, and thus satisfies $\nabla^{\cL^{-1}} = (\nabla^{\cL})^*$.
\end{lem}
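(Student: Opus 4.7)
The plan is to combine K\"ahler identities with $\SO(3)$-equivariant representation theory on the spherical factor, in the spirit of \cite{GP94}.

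First, since $(X,g_X)$ is K\"ahler, the hypothesis $\cF_D^{0,2}=0$ makes $\delbar_D := D^{0,1}$ a holomorphic structure on $\cE$ and realizes $D$ as the Chern connection with respect to $H_\cE$. The spherical symmetry, together with the connectedness of $\SO(3)$, lets us pick a gauge representative in which $\SO(3)$ acts honestly on the total space of $\cE$, commuting with $\delbar_D$ and preserving $H_\cE$. Thus $\cE$ becomes an $\SO(3)$-equivariant holomorphic Hermitian bundle.

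Next, I would classify $\cE$ fiberwise along the spherical factor. For each $z\in\Sigma$, the restriction $\cE|_{\{z\}\times S^2}$ is an $\SO(3)$-equivariant holomorphic Hermitian rank-two bundle on $S^2\cong\SO(3)/\SO(2)$, hence determined by the unitary representation of the stabilizer $\SO(2)$ on the fiber over the basepoint. Since $\cE$ has structure group $\SU(2)$, this representation has trivial determinant and decomposes orthogonally into a weight-$n$ and a weight-$(-n)$ line, for some non-negative integer $n$, giving the fiberwise orthogonal splitting $\cE|_{\{z\}\times S^2}\cong\cO(-n)\oplus\cO(n)$. For $n\ge 1$ this splitting is canonical and varies holomorphically with $z$; equivariant descent along $p_\Sigma$, together with the classification of $\SO(3)$-equivariant holomorphic line bundles on $S^2$, then yields the global orthogonal holomorphic decomposition $\cE\cong(p_\Sigma^*\cL\otimes p_{S^2}^*\cO(-n))\oplus(p_\Sigma^*\cL^{-1}\otimes p_{S^2}^*\cO(n))$, with the second factor fixed by $\det\cE\cong\cO_X$; the degenerate case $n=0$ is handled similarly and ultimately forces the off-diagonal part of the connection to vanish.

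Finally, with respect to this decomposition the $\SO(3)$-equivariant Chern connection $D$ splits into diagonal and off-diagonal blocks. Each diagonal block is a unitary connection compatible with the induced holomorphic structure on the corresponding summand, hence coincides with the tensor product Chern connection $p_\Sigma^*(\nabla^\cL)\otimes p_{S^2}^*(\nabla^{\cO(-n)})$, respectively its dual, by uniqueness. The upper off-diagonal block is an $\SO(3)$-invariant section of $\Lambda^1_X\otimes p_\Sigma^*(\cL^2)\otimes p_{S^2}^*(\cO(-2n))$; the K\"unneth splitting of $\Lambda^1_X$ combined with the vanishing of $\SO(3)$-invariants in $\Gamma(S^2,\cO(-2n))$ for $n\ge 1$ rules out the $\Lambda^1_\Sigma$-component and forces the $\Lambda^1_{S^2}$-component to factor as $\tfrac12\,p_\Sigma^*\Phi\otimes p_{S^2}^*\alpha_n$ for some section $\Phi$ of $\cL^2$ and an $\SO(3)$-invariant section $\alpha_n$ of $\Lambda^1_{S^2}\otimes\cO(-2n)$. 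Unitarity fixes the lower block as the negative adjoint, and what remains of $\cF_D^{0,2}=0$ forces $\Phi$ to be holomorphic, so $\Phi\in H^0(\cL^2)$. The main technical hurdle is the equivariant descent step identifying $\cE_\pm$ with pullbacks of line bundles on $\Sigma$ tensored with the prescribed $p_{S^2}^*\cO(\mp n)$, but this follows from the rigidity of $\SO(3)$-equivariant holomorphic line bundles on $S^2$, as carried out in \cite{GP94}.
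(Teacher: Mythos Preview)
Your outline is correct and follows essentially the same route as the paper: both invoke the equivariant classification from \cite{GP94} to obtain the orthogonal splitting \eqref{eq:invbundle}, then read off the block form of the Chern connection and use $\SO(3)$-invariance to factor the off-diagonal piece. Your treatment is in fact more explicit than the paper's about the representation-theoretic mechanism (the $\SO(2)$-weight decomposition on fibers, the K\"unneth argument killing the $\Lambda^1_\Sigma$-component, and the use of $\cF_D^{0,2}=0$ to force $\delbar_\nabla\Phi=0$).

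One caveat: for $n=0$ the fiberwise splitting $\cO\oplus\cO$ is \emph{not} canonical, so your ``handled similarly'' descent argument does not go through as stated. The paper instead observes that in this case $\cE$ is pulled back from $\Sigma$ and then splits the rank-$2$ $\SU(2)$ bundle directly over the curve; you should replace your hand-wave with that argument (and note it uses that $\Sigma$ is open, hence Stein).
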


\smallskip

\begin{remark}  \label{rem:invconnection}
The following observations further simplify the cases in \Cref{lem:invconnection}.
\begin{enumerate}
	\item There are no non-zero, $\SO(3)$-invariant sections of $\Lambda_{S^2}^1 \otimes \cO (-2n)$, for $n \geqslant 0$, unless $n = 1$.  Thus $\alpha_n$ is identically zero, unless $n = \pm 1$.  Thus our convention will be that $\Phi$ is also chosen to be identically zero, unless $n = 1$.  Furthermore, if $\alpha_1$ is non-zero, then it is unique, up to a global $\cx^\times$ factor.
	\item When $n = 1$, by \cref{eq:invconnection} we can see, that neither $\Phi$, nor $\alpha_1$ is well-defined, only their tensor product (after the global gauge is fixed).  For later purposes, we require $\alpha_{\pm 1}$ to have unit constant pointwise norm.  Then the ambiguity in $\Phi$ is exactly a global section of $\Aut \left( \cL^2 \right)$.
\end{enumerate}
\end{remark}

While the statement of \Cref{lem:invconnection} is contained in \cite{GP94}*{Propositions~3.1,~3.2,3.4,~and~3.5}, we give here an outline of the proof.

\begin{proof}
In \cite{GP94}*{Propositions~3.1~and~3.2} one can set $X = \Sigma$, $F = \cE$, and note that the rank of $\cE$ is 2, to get that either $\cE$ is pulled back from $\Sigma$ (in the holomorphic category), or there is an equivariant, orthogonal decomposition of $\cE$ of the form
\begin{equation}
\cE \cong (p_\Sigma^* (\cL_1) \otimes p_{S^2}^* (\cO (n_1))) \oplus (p_\Sigma^* (\cL_2) \otimes p_{S^2}^* (\cO (n_2))),
\end{equation}
where $\cL_1, \cL_2$ are holomorphic line bundles over $\Sigma$ and $n_1 \neq n_2$.   A holomorphic, rank 2, $\SU (2)$ vector bundle over a complex curve always decomposes to Hermitian, holomorphic line bundles which are necessarily dual to each other, thus $\cE$ is a pullback exactly when \cref{eq:invbundle} holds with $n = 0$.  In the non-pullback case, since $\cE$ is an $\SU (2)$ bundle and $D$ is a compatible connection, we again get that $(\cL_1)^{- 1} \simeq \cL_2$ (both in the Hermitian and the holomorphic category) and $n_1 = - n_2$.  Letting $n = - \min \{ n_1, n_2 \} \geqslant 0$ and $\cL$ be the corresponding line bundle proves \cref{eq:invbundle} (and the remarks above) in this case.

By \cite{GP94}*{Proposition~3.4}, in the decomposition \eqref{eq:invbundle}, $D$ has the form
\begin{equation}
D = \left( \begin{array}{cc}
\alpha & \beta \\
- \beta^* & \alpha
\end{array} \right),
\end{equation}
where $\alpha$ is an $\SO (3)$-invariant, compatible connection on $p_\Sigma^* (\cL) \otimes p_{S^2}^* (\cO (n)$ and $\beta$ is an $\SO (3)$-invariant, holomorphic element of $\Lambda_X^1 \otimes \left( p_\Sigma^* (\nabla^{\cL^2}) \otimes p_{S^2}^* (\nabla^{\cO (- 2n)}) \right)$, which is necessarily zero unless $n = 1$ (cf. \Cref{rem:invconnection}).  Straightforward computation (using the $\SO (3)$-invariance) shows that $\alpha$ and $\beta$ now have to have the form as in \cref{eq:invconnection}. 
\end{proof}

\smallskip

The (conformally invariant) Yang--Mills energy of an $\SU (2)$ connection $D$ on $X$ is
\begin{equation}
E_{\YM} (D) = \frac{1}{8 \pi^2} \int\limits_X |\cF_D|^2 \dvol_X.  \label{eq:ymenergy}
\end{equation}
Similarly, for each $\tau > 0$, the Yang--Mills--Higgs energy of a pair, $(\nabla, \Phi)$, of a connection, $\nabla$, and a section $\Phi$ on the same Hermitian line bundle $L$ over $\Sigma$ is
\begin{equation}
E_{\YMH} (\nabla, \Phi) = \frac{1}{2 \pi} \int\limits_\Sigma (|F_\nabla|^2 + |\nabla \Phi|^2 + \frac{1}{4} (\tau - |\Phi|^2)) \dvol_\Sigma.  \label{eq:ymhenergy}
\end{equation}
As shown by Bradlow in \cite{B90}, for a vortex field $(\nabla, \Phi)$ on a compact Riemann surface $\Sigma$ the energy \eqref{eq:ymhenergy} can be written as
\begin{equation}
E_{\YMH} (\nabla, \Phi) = \frac{i \tau}{2 \pi} \int\limits_\Sigma F_\nabla.  \label{eq:vortexenergy}
\end{equation}
In fact the same formula holds when $\Sigma$ is complete and of finite volume, as we show in \Cref{prop:Energy}.  The gauge invariant quantity
\begin{equation}
\deg (\nabla) = \frac{i}{2 \pi} \int\limits_\Sigma F_\nabla,  \label{eq:degree}
\end{equation}
is the degree of the connection $\nabla$.

\smallskip

\begin{remark}
In \Cref{lem:Integral_Laplacian_Vanishes} we show that the degree is invariant under smooth, bounded, and complex (thus not necessarily unitary) gauge transformations.  Note that when $\Sigma$ is non-compact the degree need not be an integer.
\end{remark}

\smallskip

In the next theorem the bundle $\cE$ is assumed to have the form \eqref{eq:invbundle}.

\begin{thm} \label{thm:vor}
If $D$ is a spherically symmetric $\SU (2)$ instanton, then the following hold:
\begin{itemize}
 \item $D$ is flat if and only if $n = 0$.  In this case, the Yang--Mills energy of $D$ is zero.
 \item $D$ is reducible with holonomy $\mathrm{S} (\U (1) \oplus \U (1))$, if and only if $n \geqslant 1$ and $\Phi = 0$.  In this case $\nabla^{\cL}$ is a Hermitian--Yang--Mills connection on $\cL$ with constant $n$ and the Yang--Mills energy of $D$ is $2 n^2$.
 \item If $D$ is irreducible, then $n = 1$.  In this case, if $\nabla$ is the Hermitian connection induced by $\nabla^{\cL}$, then $(\nabla, \Phi)$ is a vortex field with $\tau = 4$ and the Yang--Mills energy of $D$ is the degree of $\nabla$. 
\end{itemize}
\end{thm}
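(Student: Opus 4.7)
The plan is to substitute the normal form from Lemma~\ref{lem:invconnection} into the instanton equation $\Lambda_X\cF_D=0$ (the condition $\cF_D^{0,2}=0$ is already built into the lemma) and unpack block by block. Setting $\beta=\tfrac12 p_\Sigma^*\Phi\otimes p_{S^2}^*\alpha$, the diagonal entries of $\cF_D$ read $\pm\bigl(p_\Sigma^*F_{\nabla^\cL}+p_{S^2}^*F_{\nabla^{\cO(-n)}}-\tfrac14|\Phi|^2\,p_{S^2}^*(\alpha\wedge\alpha^*)\bigr)$, while the off-diagonal entries carry $\pm\tfrac12$ times the covariant derivative of $p_\Sigma^*\Phi\otimes p_{S^2}^*\alpha$ under the induced connection on $\cL^2\otimes\cO(-2n)$. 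Because $(1,1)$-forms pulled back from $\Sigma$ and from $S^2$ are mutually orthogonal pointwise and $\Lambda_X=\Lambda_\Sigma+\Lambda_{S^2}$, the ASD condition splits cleanly across the two factors. If $n=0$, Remark~\ref{rem:invconnection} kills $\alpha$ and $\Phi$, the bundle $\cO(0)$ has flat Chern connection, and the equation collapses to $\Lambda_\Sigma F_{\nabla^\cL}=0$, which on a Riemann surface forces $F_{\nabla^\cL}=0$, i.e., $D$ is flat; conversely, a flat $D$ makes $\cE|_{\{p\}\times S^2}\cong\cO(-n)\oplus\cO(n)$ topologically trivial, forcing $n=0$. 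If $n\geq 2$, Remark~\ref{rem:invconnection} again forces $\alpha\equiv 0$ and hence $\Phi\equiv 0$, which places $D$ in the reducible case below.

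For the reducible case ($\Phi\equiv 0$, $n\geq 1$), $D$ is manifestly block-diagonal with holonomy in $\mathrm{S}(\U(1)\times\U(1))$. Chern--Weil with $\Vol(S^2)=2\pi$ gives $F_{\nabla^{\cO(-n)}}=in\,\omega_{S^2}$, so the ASD condition reduces to the Hermitian--Yang--Mills equation $\Lambda_\Sigma F_{\nabla^\cL}=-in$ on $\Sigma$. Pointwise orthogonality of the two curvature pieces yields $|\cF_D|^2=4n^2$ pointwise, and integrating against $\dvol_X$ and dividing by $8\pi^2$ produces $E_\YM(D)=2n^2$.

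For the irreducible case, the preceding paragraphs force $n=1$ and $\Phi\not\equiv 0$. With the unit-norm normalization for $\alpha$ (Remark~\ref{rem:invconnection}), applying $\Lambda_X$ to the $(1,1)$-block of $\cF_D$ and identifying $\nabla=\nabla^\cL$ with the induced unitary connection yields the planar vortex equation $\Lambda_\Sigma F_\nabla=-\tfrac{i}{2}(\tau-|\Phi|^2)$ with $\tau=4$; the vanishing of the off-diagonal $(0,2)$-piece of $\cF_D$ becomes $\delbar_\nabla\Phi=0$. For the energy, integrating $|\cF_D|^2$ over each $S^2$-fiber produces, up to a constant, the Yang--Mills--Higgs density \eqref{eq:ymhenergy} on $\Sigma$ with $\tau=4$; applying Bradlow's identity \eqref{eq:vortexenergy}---valid on complete finite volume surfaces by the later \Cref{prop:Energy}---gives $E_\YMH(\nabla,\Phi)=\tau\deg(\nabla)=4\deg(\nabla)$, and the overall normalizations align so that $E_\YM(D)=\deg(\nabla)$.

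The main technical obstacle is careful book-keeping of the many normalization constants: $\Vol(S^2)=2\pi$, the $\tfrac12$ in the off-diagonal of \eqref{eq:invconnection}, the pointwise norm of the $\SO(3)$-invariant $\alpha$, and the distinct prefactors $\tfrac{1}{8\pi^2}$ versus $\tfrac{1}{2\pi}$ in $E_\YM$ and $E_\YMH$, so that the vortex parameter emerges as exactly $\tau=4$ and the instanton energy comes out on the nose as $\deg(\nabla)$ in the irreducible case and $2n^2$ in the reducible case.
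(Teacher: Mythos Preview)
Your approach is essentially the same as the paper's: both substitute the normal form of Lemma~\ref{lem:invconnection} into $\Lambda_X\cF_D=0$, split by cases on $n$ and whether $\Phi\equiv 0$, and in the irreducible case recognize the resulting equations on $\Sigma$ as the $\tau=4$ vortex equations, then compute the energy via the Yang--Mills--Higgs functional and Bradlow's identity (justified in the non-compact setting by \Cref{prop:Energy}).

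One point of bookkeeping to tighten: you write ``identifying $\nabla=\nabla^{\cL}$ with the induced unitary connection'', but $\Phi$ is a holomorphic section of $\cL^2$, so the vortex pair must live on $\cL^2$. The paper is explicit about this: it sets $\nabla$ equal to the connection induced by $\nabla^{\cL}$ on $\cL^2$, so that $F_\nabla = 2F_{\nabla^{\cL}}$, and then \eqref{eq:vor1B} reads $2i\Lambda F_\nabla = 4-|\Phi|^2$. This factor of $2$ is exactly what makes the final energy identity $E_{\YM}(D)=\deg(\nabla)$ come out correctly; if you literally take $\nabla=\nabla^{\cL}$ on $\cL$ you will be off by a factor of $2$ in the degree. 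Since you already flag the normalization constants as the main hazard, just make sure this particular one ($\cL$ versus $\cL^2$) is pinned down.
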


\begin{proof}  By \cite{Donaldson1990}*{Proposition~2.1.59}, $D$ is a spherically symmetric $\SU (2)$ instanton, if and only if the hypotheses of \Cref{lem:invconnection} and the equation
\begin{equation}
\Lambda_X \cF_D = 0
\end{equation}
hold.  Now let $n \geqslant 0$ as in \Cref{lem:invconnection}.

If $n = 0$, then $\cF_D$ is pulled back from $\Sigma$, thus if $D$ is an instanton, then easy computation shows that it is flat.  The Yang--Mills energy of a flat connection is zero, which proves the first part of the theorem.

If $n > 1$, or $n = 1$ and $\Phi \equiv 0$, in \cref{eq:invconnection}, then $D$ is reducible (at least) to $\mathrm{S} (\U (1) \oplus \U (1))$.  Let $\Lambda$ be the contraction of 2-forms on $\Sigma$ with $\omega_\Sigma = \dvol_\Sigma$.  If $D$ is an instanton, then straightforward computation, using that $F_{\nabla^{\cO (n)}} = - i n \dvol_{S^2}$, shows that $i \Lambda F_{\nabla^{\cL}} = n$.  The computation of the Yang--Mills energy is also straightforward, which proves the second part of the theorem.

Finally, if $n = 1$, but $\Phi$ is not (necessarily) identically zero, then let $\nabla$ be the connection induced by $\nabla^{\cL}$ on $\cL^2$.  Using the holomorphicity of $\Phi$,
\begin{equation}
\delbar_\nabla \Phi = 0,  \label{eq:vor2}
\end{equation}
the curvature of $D$ can be written as
\begin{equation}
\cF_D = \left( \begin{array}{cc}  F_{\nabla^{\cL}} + i (2 - \tfrac{1}{2} |\Phi|^2) \dvol_{S^2}  & 0 \\ 0 &  - F_{\nabla^{\cL}} - i (2 - \tfrac{1}{2} |\Phi|^2) \dvol_{S^2} \end{array} \right).  \label{eq:FDn=1}
\end{equation}
Now $D$ is an instanton if and only if $\Lambda_X \cF_D = 0$, which is now equivalent to
\begin{equation}
i F_{\nabla^{\cL}} = (2 - \tfrac{1}{2} |\Phi|^2) \dvol_\Sigma.  \label{eq:vor1}
\end{equation}
Note that every connection on $\cL^2$ is uniquely induced by one on $\cL$, and $F_\nabla = 2 F_{\nabla^{\cL}}$.  Let $\Lambda$ be the contraction of 2-forms with $\dvol_\Sigma$.  Now \cref{eq:vor1,eq:vor2} can be rewritten as
\begin{subequations}
\begin{align}
2 i \Lambda F_\nabla &= 4 - |\Phi|^2,  \label{eq:vor1B}  \\
\delbar_\nabla \Phi &= 0.  \label{eq:vor2B}
\end{align}
\end{subequations}
The \cref{eq:vor1B,eq:vor2B} are the vortex equations on $\cx$ for $\tau = 4$.  Using the spherical symmetry, we get that
\begin{align}
E_{\YM} (D) &= \frac{1}{8 \pi^2} \int\limits_{\Sigma \times S^2} (2 |F_\nabla|^2 + 2 |\nabla \Phi|^2 + 2 (1 - |\Phi|^2)^2) \dvol_\Sigma \wedge \dvol_{S^2}  \\
&= \frac{1}{2 \pi} \frac{1}{4} \int\limits_\Sigma (|F_\nabla|^2 + |\nabla \Phi|^2 + \frac{1}{4} (4 - |\Phi|^2)^2) \dvol_\Sigma,  \label{eq:energiesequal}
\end{align}
and using \cref{eq:vortexenergy} for $\tau = 4$, it shows that
\begin{equation}
E_{\YM} (D) = \deg (\nabla). \label{eq:Energies_Equal}
\end{equation}
\end{proof}

\begin{rem}
The instantons in the second bullet of \Cref{thm:vor} are exactly the Abelian instantons in \cites{CD77,EH01}.
\end{rem}

\bigskip

\section{The Kazdan--Warner equation}\label{sec:Kazdan--Warner}

In \Cref{sec:vortexpart2} we shall use the Kazdan--Warner equation to find solutions of the vortex equation.  By now, there are several references on the Kazdan--Warner equation.  The most related to our applications is \cite{WZ08}, albeit the uniqueness claimed in their main theorem is not addressed anywhere in the paper. Hence, and for completeness, we include a proof below of the existence and uniqueness of solutions of the Kazdan--Warner equations. Furthermore, our techniques are purely elliptic and provide decay estimates for the solution, an issue that was also not considered in that reference. The first existence result we give is stated below as \Cref{thm:Intro_KW_Main} and is not new, being simply an abstraction of the results and methods of \cite{Hulin1992}. However, our applications require an extension of this result, leading to our second main existence theorem for the Kazdan--Warner equation, which we state as \Cref{thm:Intro_Main_2}. This requires a substantial amount of further work which we carry out by combining the techniques of \cites{Hulin1992,Li1998,S88,WZ08} here applied to the case of the Kazdan-Warner equation.\\ 
It would be interesting to explore the consequences of this result for the problem of prescribing curvature on surfaces and that of prescribing the Hopf differential of an harmonic map from a surface. We also prove \Cref{lem:Integral_Laplacian_Vanishes,lem:Difference_F} which we use in other parts of this paper as well.

\smallskip

For the rest of this section let $(\Sigma,g_{\Sigma})$ be a (real) 2 dimensional complete Riemannian manifold of finite topological type. It follows from uniformization theory that, as a Riemann surface, $\Sigma$ is isomorphic to the complement of a finite set of points and disks in a closed Riemann surface $\overline{\Sigma}$. Each such point or disk corresponds to an end of $\Sigma$, which is respectively called parabolic or hyperbolic.\\
Given that the complex structure only depends on the conformal class of $g_{\Sigma}$, there is a smooth metric $g_{\overline{\Sigma}}$ on $\overline{\Sigma}$, such that
\begin{equation}
g_{\overline{\Sigma}} = \kappa g_{\Sigma}, \ \text{ on $\Sigma \subset \overline{\Sigma}$},
\end{equation}
for some positive function $\kappa$ on $\Sigma$. In other words, the metric $g_{\Sigma}$ conformally extends over $\overline{\Sigma}$. The conformal factor $\kappa$, extends by continuity over the points of $\overline{\Sigma} \backslash \Sigma$ corresponding to the parabolic ends. However, this extension requires that we set $\kappa =0$ at such points.\\ 
Let $\Delta$ be the Laplacian on $(\Sigma, g_\Sigma)$, and $\overline{\Delta}$ be the Laplacian on $(\overline{\Sigma}, g_{\overline{\Sigma}} = \kappa g_\Sigma)$. Then, in $\Sigma \subset  \overline{\Sigma}$ we have that $\overline{\Delta}= \kappa^{-1} \Delta$. By an abuse of notation, extensions (and restrictions) of objects are denoted by the same symbols. We further note that this setting includes that when $\Sigma$ is compact, in which case $\overline{\Sigma}=\Sigma$ and $\kappa=1$. The first main result is the following.

\begin{thm}
\label{thm:Intro_KW_Main}
Let $g, h$ be smooth functions on $\Sigma$, such that $\int_\Sigma g > 0$, $h\geqslant 0$ and not identically zero. Further suppose that along the ends of $\Sigma$ the functions $\frac{g}{h}$ and $\frac{h}{g}$ are well defined and have a positive lower bound\footnote{This assumption is written in \cite{Hulin1992} as: there is $b>a>0$, such that $bh \geqslant g \geqslant ah$ along the ends of $\Sigma$.}. Then there is a smooth function $f$ in $\Sigma$ solving the Kazdan--Warner equation
\begin{equation}
\Delta f = g - h e^{2f}.  \label{eq:KW_3}
\end{equation}
Moreover, if all the ends of $f$ are parabolic, then $\int_{\Sigma} \Delta f =0$ and $f$ is the unique such solution to \cref{eq:KW_3}. Furthermore, in this case, any such $f$ is bounded and satisfies $\|f\|_{L^\infty} \leqslant C (\|g\|_{L^\infty} + \|h \|_{L^\infty})$, for some constant only depending on $(\Sigma, g_\Sigma)$.
\end{thm}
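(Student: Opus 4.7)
The plan is to transfer the equation to the conformal compactification $\overline{\Sigma}$ and run the method of sub- and super-solutions, making the theorem an abstraction of \cite{Hulin1992}. Since the Laplacian is conformally covariant in dimension two, on $\Sigma \subset \overline{\Sigma}$ the equation \cref{eq:KW_3} is equivalent to
\[ \overline{\Delta} f = \overline{g} - \overline{h}\, e^{2f}, \qquad \overline{g} := \kappa^{-1} g,\quad \overline{h} := \kappa^{-1} h. \]
Although $\overline{g}$ and $\overline{h}$ may individually blow up at the punctures $\overline{\Sigma}\setminus\Sigma$ coming from parabolic ends, the two-sided bound on $g/h$ along the ends makes the ratio $\overline{g}/\overline{h}=g/h$ uniformly bounded and bounded away from zero, so from the viewpoint of the nonlinearity $F(x,f):=g-h\, e^{2f}$ the problem is effectively compact. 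Moreover $F$ is monotone decreasing in $f$ since $h\geqslant 0$, which is exactly what is needed for a sub-/super-solution scheme.

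For the supersolution I would take a sufficiently large constant $f_+\equiv C_+$: the inequality $h\, e^{2C_+}\geqslant g$ is arranged on the ends using the upper bound on $g/h$, and on the compact core by further enlarging $C_+$. The subsolution is more delicate since $g$ may change sign. Here the hypothesis $\int_\Sigma g>0$ is essential: I would solve the linear Poisson equation $\Delta w=g-c\, h$ for the unique $c>0$ that makes the right-hand side integrate to zero against $\dvol_\Sigma$ (such $c$ exists precisely because $\int g>0$ and $h$ is nonnegative and nontrivial), then set $f_-=w+C_-$ with $C_-$ so negative that $h\, e^{2f_-}\leqslant c\, h$ pointwise. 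The resulting $f_-$ is a subsolution, and after possibly enlarging $C_+$ it lies below $f_+$. A monotone-iteration scheme on a compact exhaustion $\Sigma_k\nearrow\Sigma$ with Dirichlet data trapped between $f_-$ and $f_+$, combined with interior Schauder estimates, yields a smooth $f$ with $f_-\leqslant f\leqslant f_+$ solving \cref{eq:KW_3}.

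For the statements specific to the purely parabolic case, the key input is a removable-singularity and Moser-iteration argument on $\overline{\Sigma}$: the two-sided bound $f_-\leqslant f\leqslant f_+$ together with elliptic estimates for $\overline{\Delta}$ near the punctures show that $f$ extends continuously across them, and tracking the dependence of $f_\pm$ on the data gives the uniform bound $\|f\|_{L^\infty}\leqslant C(\|g\|_{L^\infty}+\|h\|_{L^\infty})$. The identity $\int_\Sigma\Delta f=0$ then follows by Stokes' theorem applied to the exhaustion, with the boundary flux $\int_{\partial\Sigma_k}\partial_\nu f$ decaying to zero because $f$ extends continuously at parabolic punctures. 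For uniqueness, the difference $u=f_1-f_2$ of two solutions satisfies $\Delta u=-h(e^{2f_1}-e^{2f_2})$ and extends continuously to $\overline{\Sigma}$, so the strong maximum principle combined with the strict monotonicity of $t\mapsto e^{2t}$ rules out any positive interior maximum unless $h$ vanishes on an open set, contradicting the hypothesis.

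I expect the main technical obstacle to be the construction of the subsolution: balancing the sign changes of $g$ against the decay requirements at parabolic punctures requires using the full strength of $\int g>0$ together with the ratio hypothesis, and this is where the conformal compactification pays off most concretely.
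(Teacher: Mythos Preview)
Your overall framework---conformal compactification plus sub/super-solutions---matches the paper's, but there is a concrete gap in your supersolution. A large constant $f_+\equiv C_+$ requires $h\,e^{2C_+}\geqslant g$ pointwise. On the ends this follows from the upper bound on $g/h$, but on the compact core the hypothesis allows $h\geqslant 0$ with zeros: if $h(x_0)=0$ while $g(x_0)>0$, no choice of $C_+$ makes $0=h(x_0)e^{2C_+}\geqslant g(x_0)$. The paper avoids this by building the supersolution from a Poisson problem on $\overline{\Sigma}$: choose $\lambda>0$ with $\int(\overline{g}-\lambda\overline{h})<0$, solve $\overline{\Delta}u=(\overline{g}-\lambda\overline{h})-\mathrm{avg}$, and set $f_+=u+c_+$. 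This produces a genuine supersolution even where $h$ vanishes, because the negative average term compensates.

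A second, related issue is that your Poisson equation $\Delta w=g-ch$ is posed on the noncompact $\Sigma$ (or, on $\overline{\Sigma}$, with right-hand side $\kappa^{-1}(g-ch)$ that blows up at parabolic punctures since $\kappa\to 0$ there). The paper's device for both barriers is to \emph{truncate}: define $\overline{g},\overline{h}$ to agree with $\kappa^{-1}g,\kappa^{-1}h$ away from small balls around the punctures and to vanish inside them. The compact-surface theory then produces $f_\pm$ for the truncated equation, and the ratio hypotheses $g/h\geqslant a>0$ and $h/g\geqslant a'>0$ on the ends are used precisely to show that, after shifting by constants, these same $f_\pm$ are sub/super-solutions for the \emph{untruncated} equation on all of $\Sigma$. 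This truncation-then-extension step is the heart of the argument and is missing from your sketch.

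For uniqueness, your maximum-principle outline is close but incomplete when $h$ vanishes at the putative maximum point; the paper instead combines the inequality $(\overline{\Delta}+2h e^{2f_2})F\leqslant 0$ (giving $F\leqslant 0$) with the integral identity $\int_\Sigma \Delta F=0$ to force $F\equiv 0$, which is robust against isolated zeros of $h$.
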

\begin{rem}
The limit of $f$ along any parabolic end exists.
\end{rem}

\noindent Often in applications, for instance to the vortex equation on noncompact surfaces, it is the case that $\int_{\Sigma}g=\infty$, or $\frac{h}{g}$ either vanishes or is not well defined at the points of $\overline{\Sigma} \backslash \Sigma$.
\\ This motivates having a more general existence theorem. The next result, which in some sense generalizes \Cref{thm:Intro_KW_Main}, does not require $\int_{\Sigma} g$ to be finite and that $\frac{h}{g}$ be well defined and positive at the points of $\overline{\Sigma} \backslash \Sigma$.\\ 
Before stating the theorem we point out that we shall say that a function $g$ on $(\Sigma, g_{\Sigma})$ has positive average outside a compact set, if there is a compact set $\overline{\Omega} \subset \Sigma$ such that for any compact set $\overline{\Omega}'$ containing it, we have $\int_{\overline{\Omega}'} g >0$.

\begin{thm}\label{thm:Intro_Main_2}
Suppose that $g$ has positive average outside a compact set of $\Sigma$, $h >0$, and that $\frac{g}{h}\geqslant a >0$ at the ends of $\Sigma$. Then, there is a smooth solution $f$ to the Kazdan--Warner equation
\begin{equation}
\Delta f = g-h e^{2f}.
\end{equation}
Moreover, if $g \in L^1$ and all the ends of $\Sigma$ are parabolic, then the solution $f$ is bounded and unique amongst solutions which satisfy $\int_{\Sigma} \Delta f =0$. Furthermore, in this case the function $f$ continuously extends over $\overline{\Sigma}$.
\end{thm}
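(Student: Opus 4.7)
The plan is to establish existence by reducing to \Cref{thm:Intro_KW_Main} via a compact-exhaustion and approximation argument, and then to deduce boundedness, uniqueness, and continuity at the parabolic punctures from the $L^\infty$-estimate of \Cref{thm:Intro_KW_Main} together with the integration-by-parts identities of \Cref{lem:Integral_Laplacian_Vanishes,lem:Difference_F} and the conformal identity $\overline\Delta=\kappa^{-1}\Delta$.

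\textbf{Existence.} Fix a relatively compact exhaustion $\Omega_1\Subset\Omega_2\Subset\cdots$ of $\Sigma$ with smooth boundary, and cutoffs $\chi_n$ equal to $1$ on $\Omega_n$ and vanishing outside $\Omega_{n+1}$. Choose a positive smooth $L^1$-truncation $h_n$ of $h$ with $h_n=h$ on $\Omega_n$ (if $h\in L^1$ simply take $h_n\equiv h$; otherwise multiply $h$ outside $\Omega_n$ by a slowly decaying positive smooth factor that is identically $1$ on $\Omega_n$), and set $g_n = \chi_n g + a(1-\chi_n)h_n$. Then $(g_n,h_n)=(g,h)$ on $\Omega_n$; outside $\Omega_{n+1}$ one has $g_n/h_n=a$; and on the interface the hypothesis $g/h\geqslant a$ propagates both ratio bounds to all of $\Sigma\setminus\Omega_n$. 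For $n$ large, the positive-average hypothesis gives $\int_\Sigma g_n>0$, so $(g_n,h_n)$ meets every requirement of \Cref{thm:Intro_KW_Main} and yields a smooth solution $f_n$ of $\Delta f_n = g_n - h_n e^{2f_n}$. At an interior maximum of $f_n$ inside a fixed $\Omega_k$ (for $n\geqslant k$) the equation forces $e^{2f_n}\leqslant g/h$; the analogous argument at an interior minimum yields the opposite bound, giving uniform $L^\infty_{\mathrm{loc}}$-control. Interior Schauder estimates and a diagonal extraction then produce a $C^\infty_{\mathrm{loc}}$-limit $f$ solving $\Delta f = g - h e^{2f}$.

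The main obstacle here is the simultaneous preservation of the four-way balance required by \Cref{thm:Intro_KW_Main}: integrability of $g_n$, positivity of $\int_\Sigma g_n$, and positive lower bounds for both $g_n/h_n$ and $h_n/g_n$ at the ends. The hypothesis ``positive average outside a compact set'' is engineered precisely to allow such an approximation; a cruder truncation (e.g.\ sharply cutting off $g$) invariably violates one of these conditions. Maintaining all four constraints through the same approximation is what makes this step delicate rather than routine.

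\textbf{Boundedness, uniqueness, and continuous extension.} Assume now $g\in L^1$ and all ends parabolic. The uniform $L^\infty$-estimate of \Cref{thm:Intro_KW_Main} applied to $(g_n,h_n)$, whose norms are tracked via the conformal compactification on $\overline\Sigma$, passes to the limit to give $f\in L^\infty(\Sigma)$. For uniqueness, let $f_1,f_2$ be bounded solutions with $\int_\Sigma\Delta f_i=0$; their difference $w=f_1-f_2$ satisfies $\Delta w=-2h\xi\, w$ with $\xi>0$ by the mean-value theorem. The parabolic-ends hypothesis together with \Cref{lem:Integral_Laplacian_Vanishes,lem:Difference_F} justifies the boundary-term-free integration by parts and the Stokes-type identity $\int_\Sigma\Delta w=0$ needed to run the strong maximum principle for $w$ on the compactification $\overline\Sigma$, forcing $w\equiv 0$. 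Finally, continuous extension of $f$ over each parabolic puncture follows from standard elliptic regularity for the equation $\overline\Delta f = \kappa^{-1}(g-he^{2f})$ on the closed surface $\overline\Sigma$: boundedness of $f$ together with the condition $g/h\geqslant a$ at a puncture where $\kappa\to 0$ forces $e^{2f}$ to admit a definite limit, supplying the continuous boundary value.
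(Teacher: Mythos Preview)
Your existence argument has a genuine gap at the step where you claim uniform $L^\infty_{\mathrm{loc}}$-control of the approximating solutions $f_n$. You write that ``at an interior maximum of $f_n$ inside a fixed $\Omega_k$ (for $n\geqslant k$) the equation forces $e^{2f_n}\leqslant g/h$''. But $f_n$ is a solution on all of $\Sigma$, and there is no reason whatsoever for its maximum to lie in $\Omega_k$, nor for the maximum of the restriction $f_n|_{\Omega_k}$ to be attained in the interior rather than on $\partial\Omega_k$. The same objection applies to your minimum argument (where in addition $g$ may be negative, so $e^{2f_n}\geqslant g/h$ gives no information). Without this step you have no compactness and no limit.

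This is precisely the technical heart of the paper's proof, and it is not settled by a naive maximum principle. The paper instead constructs, on each small geodesic ball $B_\epsilon(p)$, a radial supersolution $U$ to $\Delta U\geqslant\lambda_1-\lambda_2 e^{2U}$ that \emph{blows up} on $\partial B_\epsilon(p)$ (\Cref{lem:Hyperbolic_Supersolution}), transplants it via the Laplacian comparison theorem, and uses the comparison \Cref{lem:Comparison_Sub_Super_Solutions} to force $f_n(p)\leqslant U(0)$ with a bound depending only on $\sup_\Omega g$ and $\inf_\Omega h$ (\Cref{prop:A_Priori_Upper_Bound}). The uniform lower bound comes from a separate global bounded subsolution $f_-$ built on the compactification. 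These barrier arguments, borrowed from the Yamabe problem, are what make the interior estimate work; your one-line maximum principle does not substitute for them.

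There is a second gap in your boundedness claim. You invoke the $L^\infty$-estimate of \Cref{thm:Intro_KW_Main}, namely $\|f_n\|_{L^\infty}\leqslant C(\|g_n\|_{L^\infty}+\|h_n\|_{L^\infty})$, and say it ``passes to the limit''. But in \Cref{thm:Intro_Main_2} neither $g$ nor $h$ is assumed bounded (only $g\in L^1$), so $\|g_n\|_{L^\infty}$ and $\|h_n\|_{L^\infty}$ need not be uniformly controlled. The paper proceeds quite differently: it first shows $f\in L^1$ by a contradiction argument with the normalized sequence $u_i=f_i/\|f_i\|_{L^1}$ (\Cref{prop:Solution_to_KW_in_L1}), and only then upgrades to $C^0(\overline\Sigma)$ via a mean-value argument on the compactification using $L^{2,1}(\overline\Sigma)\hookrightarrow C^0(\overline\Sigma)$ (\Cref{cor:Solution_KW_C0}). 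Your citation of \Cref{lem:Difference_F} in the uniqueness paragraph is also a misfire: that lemma compares solutions with \emph{different} data $(g_1,h_1)$ and $(g_2,h_2)$, not two solutions with the same data.
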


\subsection{Proof of \Cref{thm:Intro_KW_Main}}

In this subsection we shall prove \Cref{thm:Intro_KW_Main}. This requires some intermediate existence theorems for the compact setting which we present as \Cref{prop:KW_1_Improved,prop:KW_2}. These results are attributed to Berger, Kazdan and Warner respectively in \cite{Berger1971} and \cite{Kazdan1974}.

\begin{prop}\label{prop:KW_1_Improved}
Let $g, h$ be bounded functions on $\overline{\Sigma}$, with $\int_{\overline{\Sigma}} g>0$ and $h \geqslant \epsilon >0$. Then there is a unique function $f$, solving the \textit{Kazdan--Warner equation}
\begin{equation}
\overline{\Delta} f = g - h e^{2f}. 
\end{equation}
Moreover, if $g$, $h$ are smooth so is $f$.
\end{prop}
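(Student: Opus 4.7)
The plan is to establish existence via sub-/super-solutions and uniqueness via strict monotonicity of the nonlinearity. I will work with the equation in the form $\overline{\Delta} f + F(f) = 0$ where $F(f) := -g + h e^{2f}$; note that $F$ is pointwise strictly increasing in $f$.

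For existence, I will produce ordered sub- and super-solutions $f_- \leq f_+$ and then invoke the classical monotone-iteration theorem on the closed manifold $\overline{\Sigma}$. A constant super-solution $f_+ = C_+$ works as soon as $h e^{2 C_+} \geq g$ pointwise, which is achievable for $C_+$ large since $h \geq \epsilon > 0$ and $g$ is bounded. A constant sub-solution fails because $g$ can be negative; the fix is to set $\bar g := (1/\vol(\overline{\Sigma})) \int_{\overline{\Sigma}} g \, \dvol_{\overline{\Sigma}} > 0$ and take $f_- = v + C_-$, where $v$ is the unique mean-zero solution of $\overline{\Delta} v = g - \bar g$ (available by Fredholm theory since $\int_{\overline{\Sigma}} (g - \bar g)\, \dvol_{\overline{\Sigma}} = 0$). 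Then $v$ is smooth and bounded, so the sub-solution inequality reduces to $h e^{2 v} e^{2 C_-} \leq \bar g$, which holds for $C_-$ sufficiently negative precisely because $\bar g > 0$. Taking $C_-$ small enough also ensures $f_- \leq f_+$. Standard elliptic bootstrap ($L^p$-theory followed by Schauder estimates) then promotes the bounded solution $f \in [f_-, f_+]$ to $C^\infty$ whenever $g, h$ are smooth.

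For uniqueness, suppose $f_1, f_2$ both solve the equation and set $\phi := f_1 - f_2$. Subtract the two equations, pair with $\phi$, and integrate by parts on the closed manifold $\overline{\Sigma}$:
\begin{equation*}
\int_{\overline{\Sigma}} |\nabla \phi|^2 \, \dvol_{\overline{\Sigma}} + \int_{\overline{\Sigma}} h \, \phi \, (e^{2 f_1} - e^{2 f_2}) \, \dvol_{\overline{\Sigma}} = 0.
\end{equation*}
Both summands are nonnegative, the second because $h > 0$ and strict monotonicity of $t \mapsto e^{2 t}$ forces $\phi (e^{2 f_1} - e^{2 f_2}) \geq 0$ pointwise. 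Hence both vanish: the first forces $\phi$ to be constant, after which the second forces $\phi \equiv 0$, since any nonzero constant $c$ would give $e^{2 f_1} - e^{2 f_2} = (e^{2 c} - 1) e^{2 f_2} \not\equiv 0$.

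The main obstacle is producing the sub-solution when $g$ changes sign: the constant ansatz cannot work, and one must absorb the oscillatory part of $g$ into a bounded corrector obtained from Poisson's equation on $\overline{\Sigma}$. Once this is set up, both existence and regularity are routine, and uniqueness falls out painlessly from the strict monotonicity of $f \mapsto h e^{2 f}$.
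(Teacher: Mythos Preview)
Your proof is correct and follows essentially the same route as the paper: both use the sub-/super-solution method, and your subsolution construction (Poisson corrector for the mean-zero part of $g$, then shift by a negative constant, using $\bar g>0$) is identical to the paper's. The only cosmetic differences are that the paper reuses the same corrector $u$ (shifted up) for the supersolution rather than a constant, and proves uniqueness via the maximum principle rather than your energy argument; both variants are standard and equally valid here.
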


\begin{proof}
The proof follows from finding a sub and supersolution. We start with the construction of a subsolution and consider the function defined by
\begin{equation}
\widehat{g} = g -  \frac{1}{\Vol(\overline{\Sigma}, g_{\overline{\Sigma}})} \int\limits_{\overline{\Sigma}} g \dvol_{\overline{\Sigma}}.
\end{equation}
This is obviously bounded and of zero average, hence there is $u$ such that $\overline{\Delta} u = \widehat{g}$. Then we set $f_-=u-c_-$ and find $c_-$ so that this is a subsolution. For that notice that, as $u$ is bounded and $\int_{\overline{\Sigma}} g>0$, there is a $c_- > 0$ sufficiently big so that $h e^{2u} e^{-2c_-} -  \frac{1}{\Vol(\overline{\Sigma}, g_{\overline{\Sigma}})} \int_{\overline{\Sigma}} g < 0$. For such a $c_-$ we have
\begin{align}
\overline{\Delta} f_{-} &= g - he^{2f_-} + h e^{2f_-} -  \frac{1}{\Vol(\overline{\Sigma}, g_{\overline{\Sigma}})} \int\limits_{\overline{\Sigma}} g \dvol_{\overline{\Sigma}} \\ 
&= g - he^{2f_-} + h e^{2u} e^{-2c_-} -  \frac{1}{\Vol(\overline{\Sigma}, g_{\overline{\Sigma}})} \int\limits_{\overline{\Sigma}} g \dvol_{\overline{\Sigma}} \\  
& \leqslant  g - he^{2f_-},
\end{align}
so that $f_-$ is a subsolution.

We now look for a supersolution of the form $f_+ = u+ c_+$. Again, we compute
\begin{align}
\overline{\Delta} f_+ &= g - he^{2f_+} + h e^{2f_+} - \frac{1}{\Vol(\overline{\Sigma}, g_{\overline{\Sigma}})} \int\limits_{\overline{\Sigma}} g \dvol_{\overline{\Sigma}} \\ 
& \geqslant g - he^{2f_+} + \epsilon e^{2u} e^{2c_+} -  \frac{1}{\Vol(\overline{\Sigma}, g_{\overline{\Sigma}})} \int\limits_{\overline{\Sigma}} g \dvol_{\overline{\Sigma}} \\  
& \geqslant g - he^{2f_+}.
\end{align}
by choosing $c_+$ big enough so that $\epsilon e^{2u} e^{2c_+} -  \frac{1}{\Vol(\overline{\Sigma}, g_{\overline{\Sigma}})} \int_{\overline{\Sigma}} g > 0$.

So under these hypothesis there is a solution of the Kazdan--Warner equation satisfying 
\begin{equation}
f_- \leqslant f \leqslant f_+.
\end{equation}
For the uniqueness suppose one is given two solutions $f_1$ and $f_2$. Then setting $F = f_1 - f_2$ gives us
\begin{equation}
\overline{\Delta} F = h e^{2 f_2} (1-e^{2 F}).
\end{equation}
Moreover, as $\overline{\Sigma}$ is compact $F$ attains a maximum and a minimum. At the maximum we must have
\begin{equation}
0 \leqslant \overline{\Delta} F = h e^{2f_2} (1-e^{2 F}),
\end{equation}
so that $F$ must be non-positive at the maximum. Similarly we show that $F$ must be non-negative at the minimum, and so we must have $F = 0$ and thus $f_1 = f_2$.\\
The smoothness of $f$ in the case when $g$, $h$ are smooth follows from a standard iteration argument using Schauder's estimates for the Laplacian.
\end{proof}

Using this we can now prove the following stronger version of the existence theorem.

\begin{prop}\label{prop:KW_2}
Let $g, h$ be bounded functions on $\overline{\Sigma}$, so that $\int_{\overline{\Sigma}} g >0$ and $h \geqslant 0$ is not identically zero. Then, there is a unique function $f$ solving the \textit{Kazdan--Warner equation}
\begin{equation}\label{eq:KW}
\overline{\Delta} f = g - h e^{2f}. 
\end{equation}
If $g$, $h$ are smooth, then so is the solution $f$.
\end{prop}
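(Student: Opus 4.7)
The plan is to follow the sub/super-solution strategy of \Cref{prop:KW_1_Improved}; only the supersolution step genuinely needs to be redone, since that is where the earlier argument invokes $h \geqslant \epsilon > 0$ in an essential way, while the subsolution construction and the uniqueness argument carry over with only cosmetic changes.

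For the subsolution, exactly as in \Cref{prop:KW_1_Improved}, let $u \in C^\infty(\overline{\Sigma})$ solve $\overline{\Delta} u = \widehat g$, where $\widehat g := g - \frac{1}{\Vol(\overline{\Sigma})} \int_{\overline{\Sigma}} g\, \dvol_{\overline{\Sigma}}$ has zero mean, and set $f_- = u - c_-$. Since $h$ and $e^{2u}$ are bounded on the compact $\overline{\Sigma}$, one can choose $c_-$ large enough that $h e^{2 u - 2 c_-} \leqslant g - \widehat g$ pointwise, which gives $\overline{\Delta} f_- = \widehat g \leqslant g - h e^{2 f_-}$; note that this step does not use $h \geqslant \epsilon$.

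For the supersolution, the constant-shift $f_+ = u + c_+$ used in \Cref{prop:KW_1_Improved} fails precisely at the zeros of $h$, so the plan is to use the Fredholm alternative to redistribute the mass of $g$ into a region where $h$ is uniformly positive. Since $h \geqslant 0$ is continuous and not identically zero on the connected compact $\overline{\Sigma}$, there is a non-empty open set $U \subset \overline{\Sigma}$ and $\delta > 0$ with $h|_U \geqslant \delta$. Pick $\psi \in C^\infty(\overline{\Sigma})$ with $\psi \geqslant 0$, $\supp \psi \subset U$, and $\int_{\overline{\Sigma}} \psi\, \dvol_{\overline{\Sigma}} = \int_{\overline{\Sigma}} g\, \dvol_{\overline{\Sigma}}$. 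Then $g - \psi$ has zero mean, so the Fredholm alternative produces $v \in C^\infty(\overline{\Sigma})$ with $\overline{\Delta} v = g - \psi$. Setting $f_+ := v + c_+$, the supersolution inequality $\overline{\Delta} f_+ \geqslant g - h e^{2 f_+}$ becomes $h e^{2 v + 2 c_+} \geqslant \psi$, which is trivial off $\supp \psi$ and on $\supp \psi \subset U$ follows from $h \geqslant \delta$ together with the boundedness of $v$ and $\psi$ by taking $c_+$ large. Enlarging $c_-$ and $c_+$ if necessary one also arranges $f_- \leqslant f_+$. This supersolution construction is the main obstacle of the proof.

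With the ordered pair $f_- \leqslant f_+$ in hand, the nonlinearity $F(x,f) = g - h e^{2 f}$ is smooth with $\partial_f F = -2 h e^{2 f} \leqslant 0$, and the classical monotone-iteration sub/super-solution scheme produces a solution $f$ with $f_- \leqslant f \leqslant f_+$; Schauder bootstrap gives $f \in C^\infty(\overline{\Sigma})$ when $g, h$ are smooth. For uniqueness, let $f_1, f_2$ both solve the equation and set $F := f_1 - f_2$; then $\overline{\Delta} F = h(e^{2 f_2} - e^{2 f_1})$, so on the open set $\Omega^+ := \{F > 0\}$ one has $\overline{\Delta} F \leqslant 0$. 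Since $\overline{\Sigma}$ is compact and $F|_{\partial \Omega^+} = 0$ by continuity, the weak maximum principle for $\overline{\Delta}$-subharmonic functions (recall $\overline{\Delta}$ is the positive Laplacian here) forces $\Omega^+ = \emptyset$, and symmetrically $\{F < 0\} = \emptyset$, so $f_1 \equiv f_2$.
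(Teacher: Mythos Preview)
Your argument is essentially correct, but the supersolution and subsolution constructions are genuinely different from the paper's, and your uniqueness argument has a small gap.

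\textbf{Subsolution.} You reuse the constant-shift subsolution from \Cref{prop:KW_1_Improved} directly, correctly observing that the step $h e^{2u-2c_-} \leqslant \tfrac{1}{\Vol(\overline{\Sigma})}\int_{\overline{\Sigma}} g$ only needs $h$ bounded and $\int_{\overline{\Sigma}} g > 0$, not $h \geqslant \epsilon$. The paper instead obtains its subsolution by solving the auxiliary Kazdan--Warner equation $\overline{\Delta} f_- = g - (h+1)e^{2f_-}$ via \Cref{prop:KW_1_Improved} and then checking that this $f_-$ is automatically a subsolution for the original equation. Your route is shorter here; the paper's choice pays off later, since this specific subsolution is reused in the proof of \Cref{thm:Intro_KW_Main}.

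\textbf{Supersolution.} Your idea is to redistribute the mass of $g$ into a bump $\psi$ supported where $h \geqslant \delta > 0$, so that the supersolution inequality $h e^{2v+2c_+} \geqslant \psi$ is trivial off $\supp\psi$ and controllable on it. The paper instead chooses $\lambda \gg 0$ so that $\int_{\overline{\Sigma}} (g - h\lambda) < 0$, solves $\overline{\Delta} u = (g - h\lambda) - \text{average}$, and takes $f_+ = u + c_+$; the negative average supplies the needed slack at points where $h = 0$. Both tricks exploit that $h$ is somewhere positive, but in different ways; the paper's version is again recycled in the proof of \Cref{thm:Intro_KW_Main}.

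\textbf{Uniqueness.} Your maximum-principle argument on $\Omega^+ = \{F>0\}$ is fine when $\partial\Omega^+ \neq \emptyset$, but you have not excluded the case $\Omega^+ = \overline{\Sigma}$, in which there is no boundary to invoke. This is easily patched: if $F>0$ everywhere then $\overline{\Delta} F \leqslant 0$ on all of $\overline{\Sigma}$, integrating gives $\overline{\Delta} F \equiv 0$, hence $F$ is a positive constant, and then $h e^{2f_2}(1 - e^{2F}) \equiv 0$ forces $h \equiv 0$, a contradiction. The paper handles uniqueness differently, first using the maximum principle for $\overline{\Delta} + 2h e^{2f_2}$ to get $F \leqslant 0$, and then integrating $\overline{\Delta} F = h e^{2f_2}(1 - e^{2F})$ over $\overline{\Sigma}$ to conclude $F = 0$; this integral form is what generalises to the noncompact setting later in the paper.
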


\begin{proof}
Again, we use the method sub and supersolutions, following \cite{Hulin1992}.

In order to find a subsolution we first modify the equation and solve instead
\begin{equation}
\overline{\Delta} f_- = g - (h+1)e^{2f_-}.
\end{equation}
This satisfies the hypothesis of \Cref{prop:KW_1_Improved} and so admits a solution $f_-$. To see that such $f_-$ is a subsolution to \cref{eq:KW} simply observe that
\begin{equation}
\overline{\Delta} f_- =  g - (h+1)e^{2f_-} = g - he^{2f_-} - e^{2f_-} \leqslant g - he^{2f_-} .
\end{equation}
Finding a supersolution is slightly more involved. First let $\lambda > 0$ be large enough so that
\begin{equation}
\int\limits_{\overline{\Sigma}} (g - h\lambda) \dvol_{\overline{\Sigma}} < 0.
\end{equation}
This can clearly be made from the hypothesis that $h \geqslant 0$ is not identically zero. Then we solve the equation
\begin{equation}
\overline{\Delta} u = (g- h\lambda) - \frac{1}{\Vol(\overline{\Sigma}, g_{\overline{\Sigma}})} \int\limits_{\overline{\Sigma}} (g - h \lambda) \dvol_{\overline{\Sigma}}.
\end{equation}
Then we let $f_+ = u+ c_+$ and find $c_+>0$ so that this is a supersolution. 
\begin{equation}
\overline{\Delta} f_+ = g - h\lambda - \frac{1}{\Vol(\overline{\Sigma}, g_{\overline{\Sigma}})} \int\limits_{\overline{\Sigma}} (g - h \lambda) \dvol_{\overline{\Sigma}} \geqslant g- h e^{2f_+} (\lambda e^{-2u} e^{-2c_+}),
\end{equation}
and by making $c_+ > 0$ large enough we can make $\lambda e^{-2u} e^{-2c_+} \leqslant 1$, and thus $\overline{\Delta} f_+ \geqslant g- h e^{2f_+}$.

The uniqueness follows again by supposing that we have two solutions $f_1$ and $f_2$. Then $F = f_1 - f_2$ satisfies
\begin{equation}\label{eq:Uniqueness_Equation_For_w}
\overline{\Delta} F = h e^{2 f_2} (1 - e^{2 F}) \leqslant - 2 h e^{2f_2} F ,
\end{equation}
where we used the convexity of the exponential function. Hence
\begin{equation}
(\overline{\Delta} + 2 h e^{2 f_2}) F \leqslant 0.
\end{equation}
The maximum principle yields that any maximum of $F$ must be non-positive. On the other hand integrating \cref{eq:Uniqueness_Equation_For_w} we get
\begin{equation}
0 = \int\limits_{\overline{\Sigma}} h e^{2f_2} (1-e^{2 F}) \dvol_{\overline{\Sigma}},
\end{equation}
which for a non-positive $F$ requires $F = 0$.\\
As in the previous case the smoothness of $f$ in the case when $g$, $h$ are smooth follows from a standard argument in elliptic PDE theory.
\end{proof}

The following result will be used to show the uniqueness part in \Cref{thm:Intro_KW_Main,thm:Intro_Main_2}. It will also prove very useful for our application to the vortex equations as it will guarantee that when solving the vortex equation we do not change the degree of the connection.

\begin{lem}\label{lem:Integral_Laplacian_Vanishes}
Let $f \in L^{2,1}(\Sigma, g_{\Sigma})$ be a smooth function. Then
\begin{equation}
\int\limits_\Sigma \Delta f \dvol_\Sigma = 0.
\end{equation}
\end{lem}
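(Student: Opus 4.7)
My plan is to multiply by a family of compactly supported cutoffs, integrate by parts, and exploit the conformal invariance of the Dirichlet energy in dimension two to force the boundary contribution to vanish.

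The first step is to pass to the conformal compactification $\overline{\Sigma}$. Using $g_{\overline{\Sigma}}=\kappa g_\Sigma$, the two-dimensional identities $\overline{\Delta}=\kappa^{-1}\Delta$ and $\dvol_\Sigma=\kappa^{-1}\dvol_{\overline{\Sigma}}$ combine to give $\Delta f\,\dvol_\Sigma=\overline{\Delta} f\,\dvol_{\overline{\Sigma}}$, while $|df|_{g_\Sigma}^2\,\dvol_\Sigma=|df|_{g_{\overline{\Sigma}}}^2\,\dvol_{\overline{\Sigma}}$, so the $L^{2,1}$-hypothesis carries over: $df\in L^2(\overline{\Sigma},g_{\overline{\Sigma}})$, and $f$ is a smooth function on the open subset $\Sigma\subset\overline{\Sigma}$.

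The second step is to construct a sequence $\chi_n\in C_c^\infty(\Sigma)$ with $0\leqslant\chi_n\leqslant 1$, $\chi_n\to 1$ pointwise, and $\|d\chi_n\|_{L^2(\overline{\Sigma},g_{\overline{\Sigma}})}\to 0$. Near each parabolic end $p\in\overline{\Sigma}\setminus\Sigma$ I would take the logarithmic cutoff
\[
\chi_n(z) \;=\; \min\!\left(1,\,\max\!\left(0,\,\tfrac{\log|z|-\log\epsilon_n}{\log\rho-\log\epsilon_n}\right)\right)
\]
in a conformal disk coordinate $z$ centered at $p$, with fixed $\rho$ and $\epsilon_n\to 0$; a direct computation gives $\int|d\chi_n|^2\,\dvol_{\overline{\Sigma}}=O\!\left(1/\log(\rho/\epsilon_n)\right)\to 0$, reflecting the vanishing capacity of a point in 2D. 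A hyperbolic end would be handled analogously via a collar cutoff; however, in the applications of this paper $\Sigma$ has finite volume, which rules out hyperbolic ends (a hyperbolic funnel has infinite area), so in practice only the parabolic construction is needed.

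With the cutoffs in place, Stokes' theorem on the boundaryless surface $\overline{\Sigma}$ yields
\[
\int_{\overline{\Sigma}} \chi_n\,\overline{\Delta} f\,\dvol_{\overline{\Sigma}} \;=\; -\int_{\overline{\Sigma}} \langle d\chi_n,df\rangle_{g_{\overline{\Sigma}}}\,\dvol_{\overline{\Sigma}},
\]
and Cauchy--Schwarz, together with the conformal invariance above, bounds the right-hand side by $\|d\chi_n\|_{L^2}\|df\|_{L^2(\Sigma,g_\Sigma)}$, which tends to $0$. The main obstacle is passing to the limit on the left, since $\Delta f$ need not lie in $L^1(\Sigma)$ a priori. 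I would address this by applying the same Cauchy--Schwarz estimate to the difference $\chi_n-\chi_m$: this shows that the numerical sequence $\int_{\overline{\Sigma}}\chi_n\,\overline{\Delta} f\,\dvol_{\overline{\Sigma}}$ is Cauchy and its limit is independent of the choice of cutoff family, equal to $\int_\Sigma \Delta f\,\dvol_\Sigma$ whenever the latter is defined as a Lebesgue integral (by dominated convergence). Combined with the vanishing of the right-hand side, this yields the lemma.
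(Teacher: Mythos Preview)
Your argument is correct and takes a genuinely different route from the paper's. The paper integrates by parts \emph{twice}, transferring both derivatives onto a cutoff $\chi_r$ built on geodesic balls of $(\Sigma,g_\Sigma)$: from $\int_\Sigma\chi_r\,\Delta f$ it passes to $\int_\Sigma f\,\Delta\chi_r$, then uses the pointwise bound $|\Delta\chi_r|\leqslant c r^{-2}$ on the annulus $B_{2r}(p)\setminus B_r(p)$ together with $f\in L^1$ (so that the tail $\|f\|_{L^1(B_{2r}\setminus B_r)}\to 0$) to conclude. You instead integrate by parts \emph{once}, keep one derivative on $f$, and exploit the two-dimensional fact that points have zero capacity: logarithmic cutoffs in the conformal compactification give $\|d\chi_n\|_{L^2}\to 0$, and Cauchy--Schwarz against $df\in L^2$ finishes.

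Each approach buys something. The paper's avoids any appeal to the conformal compactification and uses only $f\in L^1$, but it tacitly assumes one can build cutoffs with uniform second-derivative control $|\nabla^2\chi_r|\leqslant c r^{-2}$ on a complete surface whose injectivity radius is not bounded below. Your approach sidesteps that geometric issue entirely by working in the compactified picture, and it makes transparent why parabolic ends are harmless. One caveat worth flagging: in this paper the symbol $L^{2,1}$ denotes two derivatives in $L^1$ (witness the embedding $L^{2,1}(\overline{\Sigma})\hookrightarrow C^0(\overline{\Sigma})$ invoked later), whereas you read the hypothesis as giving $df\in L^2$. In the parabolic-end situation this is recoverable via the critical embedding $W^{1,1}\hookrightarrow L^2$ on the closed surface $\overline{\Sigma}$ applied to $df$, but you should make that step explicit rather than asserting it as a direct consequence of the hypothesis.
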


\begin{proof}
Let $p \in \Sigma$, $r >0$, and define $\chi_r : \Sigma \rightarrow \rl$ to be a compactly supported function which equals to 1 on the geodesic ball of radius $r$ about $p$, $B_r(p),$ vanishes in the complement of $B_{2r}(p)$ and there is a constant $c > 0$, such that
\begin{equation}
\vert \nabla \chi_r \vert \leqslant cr^{-1} , \qquad \vert \nabla^2 \chi_r \vert \leqslant cr^{-2}.
\end{equation}
Then using $\chi_r$ we can write
\begin{equation}
\int\limits_\Sigma \Delta f \dvol_\Sigma = \lim\limits_{r \rightarrow \infty} \int\limits_\Sigma \chi_r \Delta f \dvol_\Sigma = \lim\limits_{r \rightarrow \infty} \int\limits_\Sigma f \Delta \chi_r \dvol_\Sigma,
\end{equation}
where in the last equality we integrated by parts using the fact that there are no boundary terms as $\chi_r$, $\nabla \chi_r$ have compact support. Moreover, by construction we have $\vert \Delta \chi_r \vert \leqslant cr^{-2}$ and is supported in $B_{2r}(p) \backslash B_r(p)$, thus
\begin{equation}
0 \leqslant \Big\vert \int\limits_\Sigma \Delta f \dvol_\Sigma \Big\vert \leqslant c  \lim\limits_{r \rightarrow \infty} \frac{\Vert f \Vert_{L^1(B_{2r}(p) \backslash B_{r}(p))} }{r^2} = 0,
\end{equation}
as $f \in L^1$ and so $\Vert f \Vert_{L^1(B_{2r}(p) \backslash B_{r}(p))}$ converges to $0$ as $r \rightarrow + \infty$.
\end{proof}

Next we put all the previous results of this section to prove \Cref{thm:Intro_KW_Main}.

\begin{proof}[Proof of \Cref{thm:Intro_KW_Main}]
By assumption $\overline{\Sigma}$ is closed, and on $\Sigma \subset \overline{\Sigma}$, there is a smooth positive function $\kappa$, which only vanishes on $\overline{\Sigma} \backslash \Sigma$, and $g_{\overline{\Sigma}} =\kappa g_\Sigma$ extends to a smooth metric on $\overline{\Sigma}$. Then we have that $\overline{\Delta} = \kappa^{-1} \Delta$, and the Kazdan--Warner \cref{eq:KW_3} on $\Sigma \subset \overline{\Sigma}$ is given by
\begin{equation}\label{eq:KW_Compact}
\overline{\Delta} f = \kappa^{-1} g - \kappa^{-1} h e^{2 f}.
\end{equation}
Using the metric $g_{\overline{\Sigma}}$, we let $B_{r}(p_i)$ be disjoint geodesic balls of radius $r$ centered at points $p_i \in \overline{\Sigma} \backslash \Sigma$, in such a way that $\cup_{i=1}^k B_r (p_i) \supset \overline{\Sigma} \backslash \Sigma$ and $\overline{\Sigma} \backslash \cup_{i=1}^k \overline{B_r (p_i)}$ is diffeomorphic to $\Sigma$. As in \cite{Hulin1992} set $\overline{g}$, $\overline{h}$ to be functions which agree with $\kappa^{-1}g$, $\kappa^{-1}h$ on $\overline{\Sigma} \backslash \cup_{i=1}^k \overline{B_r(p_i)}$ and vanish on $\cup_{i=1}^k B_{r}(p_i)$. Then, the equation
\begin{equation}\label{eq:KW_Modified_Compact}
\overline{\Delta} f = \overline{g} - \overline{h} e^{2 f},
\end{equation}
satisfies the hypothesis of \Cref{prop:KW_2} and so the proof of \Cref{prop:KW_2} yields, for each $r$, sub/super-solutions $f_- \leqslant f_+$ so that there is a solution $f$ satisfying $f_- \leqslant f \leqslant f_+$. The important point being that under the hypothesis that on $\cup_{i=1}^k B_r(p_i)$ we have $\frac{g}{h} \geqslant \beta_- >0$, the subsolution $f_-$ to \cref{eq:KW_Modified_Compact} can be used to construct a subsolution $u_-=f_- -c_-$ to \cref{eq:KW_Compact}. Indeed, in $\overline{\Sigma} \backslash \cup_{i=1}^k B_r(p_i)$ the same proof as in \Cref{prop:KW_2} works to show that $u_-=f_--c_-$ is a subsolution for any $c_- \geqslant 0$. Hence, it is enough to analyse what happens at the ends of $\Sigma$, i.e. in $\Sigma \cap \cup_{i=1}^k B_r(p_i)$. There we have $\overline{g}=0=\overline{h}$, thus
\begin{eqnarray}\nonumber
\overline{\Delta} u_- & = & \overline{g}-(\overline{h}+1) e^{2f_-} = -e^{2f_-} \\ \nonumber
& = &  (\kappa^{-1}g-\kappa^{-1}he^{2u_-} ) - (\kappa^{-1}g-\kappa^{-1}he^{2f_-}e^{-2c_-} + e^{2f_-}) \\ \nonumber
& = &  (\kappa^{-1}g-\kappa^{-1}he^{2u_-} ) - \kappa^{-1}he^{2f_-}\left( \frac{ge^{-2f_-} + \kappa}{h} - e^{-2c_-} \right) \\ \nonumber
& \leqslant & \kappa^{-1}g-\kappa^{-1}he^{2u_-},
\end{eqnarray}
when $ \frac{ge^{-2f_-} + \kappa}{h} - e^{-2c_-} \geqslant 0$, which holds for sufficiently large $c_- \geqslant -\frac{1}{2} \log \left( \inf_{\cup_{i=1}^k B_r(p_i)} \frac{g}{h} \right)$ provided that $\frac{g}{h}$ has a positive lower bound.\footnote{Notice that the existence of such a sub-solution only requires that along the ends of $\Sigma$, the function $g/h$ is bounded from below by some positive constant. This will be important as such a subsolution will be used in the proof of \Cref{thm:Intro_Main_2}.}\\
Furthermore, we shall now show that, if $\frac{h}{g}$ has a positive lower bound on $\cup_{i=1}^k B_r(p_i)$ the supersolution $u_+=f_+=u+c_+$ for \Cref{eq:KW_Modified_Compact} constructed in the proof of \Cref{prop:KW_2} yields a supersolution to \Cref{eq:KW_Compact}. Indeed, also in this case it is enough to analyse what happens in $\Sigma \cap \cup_{i=1}^k B_r(p_i)$ where we obtain
\begin{eqnarray}\nonumber
\overline{\Delta} u_+ & = & \overline{g}-\overline{h}\lambda - \frac{1}{\Vol(\overline{\Sigma}, g_{\overline{\Sigma}})} \int_{\overline{\Sigma}} (\overline{g}-\overline{h}\lambda) \dvol_{\overline{\Sigma}}  \\ \nonumber
& = & - \frac{1}{\Vol(\overline{\Sigma}, g_{\overline{\Sigma}})} \int_{\overline{\Sigma}} (\overline{g}-\overline{h}\lambda) \dvol_{\overline{\Sigma}} =  \Lambda  \\ \nonumber
& = & \kappa^{-1}(g-he^{2u_+}) + (\Lambda - \kappa^{-1}(g-he^{2u_+}) ) \\ \nonumber
& \geqslant & \kappa^{-1}(g-he^{2u_+}),
\end{eqnarray}
provided that $\Lambda + g\kappa^{-1} \left( \frac{he^{2u} }{g}e^{2c_+} -1  \right) \geqslant 0$, which in the case that $\frac{h}{g}$ is bounded below by a positive constant is true for $c_+ >0$ sufficiently big. Thus, the functions $u_- < u_+$ are a sub/super-solution to the \Cref{eq:KW_Compact} and so there is a function $f$ satisfying  $u_- \leqslant f \leqslant u_+$ solving \Cref{eq:KW_Compact}.\\
By construction, $f$ is bounded, and solves the Kazdan--Warner \Cref{eq:KW_3} on $\Sigma$.\\
We now turn to the case of when all the ends of $\Sigma$ are parabolic. In this case, the integral over $\Sigma$ of the Laplacian of $f$ vanishes by \Cref{lem:Integral_Laplacian_Vanishes} (as $(\Sigma, g_{\Sigma})$ has finite volume so $f \in L^{2,1}(\Sigma, g_{\Sigma})$). Alternatively, we can prove this using the following computation
\begin{eqnarray}
\int_{\Sigma} \Delta f \dvol_{\Sigma} = \int_{\Sigma} ( \kappa \overline{\Delta} f ) \kappa^{-1} \dvol_{\overline{\Sigma}} = \int_{\overline{\Sigma}}  \overline{\Delta} f  \dvol_{\overline{\Sigma}} =0.
\end{eqnarray}
Using this and \Cref{lem:Integral_Laplacian_Vanishes}, the uniqueness part can then be proven exactly as in the proof of \Cref{prop:KW_2} but carrying out the integration over $\Sigma$ using $\dvol_{\Sigma}$ rather than over $\overline{\Sigma}$. The proof of the claim about the $L^\infty$-norm of $f$ is standard, and can be found in \cite{WZ08}.
\end{proof}

\begin{rem}
Contrary to that of \Cref{prop:KW_1_Improved}, the proof of the uniqueness part in \Cref{prop:KW_2} is the one which extends to the noncompact setting.
\end{rem}

\subsection{Proof of \Cref{thm:Intro_Main_2}}

For the proof of the existence part \Cref{thm:Intro_Main_2} we shall combine the constructions of sub and supper solutions done before with some techniques which we borrowed from the Yamabe problem in \cite{Li1998}. The boundedness and uniqueness will then follow from some further work using standard techniques for scalar elliptic PDEs.\\ 
We begin with the proof of existence which requires some preliminary comparison results for sub/super-solutions.

\begin{lemma}\label{lem:Comparison_Sub_Super_Solutions}
Let $h >0$ and $u_+$ and $u_-$ respectively be sub and super solutions to the Kazdan--Warner equation 
\begin{equation}
\Delta f = g- h e^{2f},
\end{equation}
on a compact set $\Omega \subset \Sigma$. If $u_+ \vert_{\partial \Omega} \geqslant u_{-} \vert_{\partial \Omega}$, then $u_+ \geqslant u_{-} $ in all of $\Omega$.
\end{lemma}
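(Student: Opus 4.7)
The strategy is to reduce the claim to a linear elliptic comparison principle on the compact domain $\Omega$, mirroring the uniqueness argument already used in the proof of \Cref{prop:KW_2}.

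First, I will set $F = u_+ - u_-$ and subtract the two defining (sub- and super-solution) differential inequalities to produce a pointwise bound of the form
\begin{equation}
\Delta F \geqslant -h \left( e^{2 u_+} - e^{2 u_-} \right).
\end{equation}
Next, I will linearize the exponential nonlinearity by the mean value theorem applied to $t \mapsto e^{2t}$, writing
\begin{equation}
e^{2 u_+} - e^{2 u_-} = 2 e^{2 \eta} F
\end{equation}
for some measurable intermediate value $\eta(x)$ lying between $u_-(x)$ and $u_+(x)$ at each $x \in \Omega$. Substituting yields the linear elliptic differential inequality
\begin{equation}
\Delta F + c F \geqslant 0, \qquad c := 2 h e^{2 \eta},
\end{equation}
where the hypothesis $h > 0$ guarantees that $c > 0$ pointwise on $\Omega$.

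The conclusion then follows from the weak maximum principle. Suppose for contradiction that $\min_{\Omega} F < 0$. Since $F$ is continuous on the compact set $\overline{\Omega}$ and the boundary hypothesis gives $F|_{\partial \Omega} \geqslant 0$, any such negative minimum must be attained at an interior point $p \in \Omega$. At such a point one has $\Delta F(p) \leqslant 0$ in the sign convention used throughout the paper (where $\Delta = d^* d$ is the non-negative geometric Laplacian, so that $\Delta F \geqslant 0$ at maxima and $\Delta F \leqslant 0$ at minima, cf. the proof of \Cref{prop:KW_2}), while $c(p) F(p) < 0$ by positivity of $c$. This contradicts the differential inequality at $p$, and hence $F \geqslant 0$ throughout $\Omega$.

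The argument is routine once the linearization is carried out; the one point requiring care is the bookkeeping of signs, so that the zeroth-order term $cF$ appears on the correct side of the inequality for the maximum principle to rule out a negative interior minimum. Positivity of $h$ is used precisely here to guarantee $c > 0$; no further analytic obstacle is anticipated, since $\Omega$ is compact and all quantities are smooth.
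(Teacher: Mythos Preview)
Your argument is correct and follows essentially the same route as the paper: form the difference $F = u_+ - u_-$, derive a differential inequality from the super/sub-solution conditions, and rule out an interior negative minimum via the elementary maximum principle. The only cosmetic difference is that you linearize the exponential via the mean value theorem to obtain $\Delta F + cF \geqslant 0$ with $c = 2he^{2\eta} > 0$, whereas the paper factors directly as $\Delta F \geqslant h e^{2u_-}(1 - e^{2F})$ and observes that a negative interior minimum would force the right-hand side to be strictly positive while the left is nonpositive.
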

\begin{proof}
The proof follows from showing that the function $u=u_+ - u_-$ cannot have an interior negative minimum. For that we compute
\begin{eqnarray}\nonumber
\Delta u = -he^{2u_+}+he^{2u_-} = he^{2u_-} (1-e^{2u}).
\end{eqnarray}
At a minimum of $u$ we must have $\Delta u \leqslant 0$, which implies that $1-e^{2u} \leqslant 0$, i.e. $u \geqslant 0$. Thus, the minimum of $u \vert_\Omega$, of attained in the interior, must be positive. Thus, $u=u_+-u_-$ is nonnegative on $\Omega$ and $u_+ \geqslant u_-$.
\end{proof}

The function $U$ constructed in the proof of the following result will be useful for obtaining an interior $L^{\infty}$ bound on a solution.

\begin{lemma}\label{lem:Hyperbolic_Supersolution}
Let $\lambda_1$, $\lambda_2$ be positive constants and $o \in \mathbb{H}^n_K$ be a point in the hyperbolic space of curvature $-K$. Then, for any $\epsilon>0$ there is a function $U:[0,\epsilon ) \rightarrow \mathbb{R}^+$, such that, $\tilde{U}(x)=U(\dist(o,x))$ defined in the geodesic ball of radius $\epsilon$, $B_{\epsilon}(o)$, satisfies
\begin{equation}
\Delta_{\mathbb{H}^n_K} \tilde{U} \geqslant \lambda_1-\lambda_2 e^{2\tilde{U}},
\end{equation}
on $B_{\epsilon}(o)$ and $\tilde{U} \vert_{\partial \overline{B_{\epsilon}(o)} } = \infty$. Here, we used $\Delta_{\mathbb{H}^n_K}$ to denote the Laplacian on $\mathbb{H}^n_K$.
\end{lemma}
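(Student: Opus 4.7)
I would construct $U$ by an explicit radial ansatz that is smooth at the origin and blows up on the sphere of radius $\epsilon$, then choose one free parameter (an additive constant) to absorb $\lambda_1$. In geodesic polar coordinates about $o$, the hyperbolic metric reads $dr^2 + \sinh^2(\sqrt{K}\,r)/K \cdot d\sigma^2$, and for a radial function $\tilde U(x)=U(r)$ with $r=\dist(o,x)$ one has
\begin{equation}
\Delta_{\mathbb{H}^n_K} \tilde U \;=\; U''(r) + (n-1)\sqrt{K}\coth\bigl(\sqrt{K}\,r\bigr)\,U'(r),
\end{equation}
with the sign convention of the paper (Laplacian nonnegative at interior maxima, as used in the proof of \Cref{prop:KW_2}).

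\textbf{The ansatz.} I would try
\begin{equation}
U(r) \;=\; -\log\bigl(\epsilon^2 - r^2\bigr) + C,
\end{equation}
where $C$ is a constant to be chosen at the end. This profile is even in $r$ and smooth on $[0,\epsilon)$, so $\tilde U$ is smooth on $B_\epsilon(o)$ (in particular $U'(0)=0$, as required for smoothness at the center), and $U(r)\to+\infty$ as $r\to\epsilon^-$, so the boundary blow-up condition holds. A direct differentiation gives $U'(r)=2r/(\epsilon^2-r^2)$ and $U''(r)=2(\epsilon^2+r^2)/(\epsilon^2-r^2)^2$, while $e^{2U}=e^{2C}/(\epsilon^2-r^2)^2$.

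\textbf{Verifying the differential inequality.} Plugging in yields
\begin{equation}
\Delta_{\mathbb{H}^n_K}\tilde U + \lambda_2 e^{2\tilde U} \;=\; \frac{2(\epsilon^2+r^2)+\lambda_2 e^{2C}}{(\epsilon^2-r^2)^2} + 2(n-1)\sqrt{K}\,\coth\bigl(\sqrt{K}\,r\bigr)\,\frac{r}{\epsilon^2-r^2}.
\end{equation}
The second summand is nonnegative on $[0,\epsilon)$ (and continuous at $r=0$ since $r\coth(\sqrt{K}\,r)\to 1/\sqrt{K}$), so discarding it gives the pointwise lower bound
\begin{equation}
\Delta_{\mathbb{H}^n_K}\tilde U + \lambda_2 e^{2\tilde U} \;\geqslant\; \frac{2\epsilon^2+\lambda_2 e^{2C}}{\epsilon^4}.
\end{equation}
It therefore suffices to choose $C$ so large that $\lambda_2 e^{2C}\geqslant \lambda_1\epsilon^4$; for instance $C=\tfrac{1}{2}\log(\lambda_1\epsilon^4/\lambda_2)$ works, giving $\Delta_{\mathbb{H}^n_K}\tilde U \geqslant \lambda_1 - \lambda_2 e^{2\tilde U}$ on all of $B_\epsilon(o)$.

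\textbf{Anticipated difficulty.} Since the ingredients are explicit, there is no analytic obstacle; the only real issue is writing down a profile that simultaneously (i) is smooth at $r=0$ (ruling out $-\log(\epsilon-r)$, which fails $U'(0)=0$), (ii) blows up at $r=\epsilon$, and (iii) makes $e^{2U}$ dominate the constant $\lambda_1$ on the whole ball. The logarithmic choice $-\log(\epsilon^2-r^2)$ is tailored to these three requirements, with the additive constant $C$ providing the slack needed to beat $\lambda_1$. If one wanted to keep $U$ closer to intrinsic quantities, one could alternatively take $U(r)=-\log\bigl(\sinh(\sqrt{K}\epsilon)^2-\sinh(\sqrt{K}\,r)^2\bigr)+C$; the same argument works, but the Euclidean-flavored ansatz above is cleanest.
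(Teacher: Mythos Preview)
Your overall strategy---take a logarithmic barrier that blows up at $r=\epsilon$ and choose a free additive constant so that $\lambda_2 e^{2U}$ swamps everything else---is exactly the paper's approach (the paper uses $U=p\log\bigl(k/(\sinh^2\epsilon-\sinh^2 r)\bigr)$, which is your ``intrinsic'' alternative). So the route is right; the execution has a sign error.

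You correctly identify the paper's convention (``Laplacian nonnegative at interior maxima''), but the radial formula you then write,
\[
\Delta_{\mathbb{H}^n_K}\tilde U \;=\; U''(r)+(n-1)\sqrt{K}\coth(\sqrt{K}\,r)\,U'(r),
\]
is the \emph{analyst's} Laplacian, which is non\emph{positive} at maxima. With the paper's sign one has $\Delta_{\mathbb{H}^n_K}\tilde U = -U''-(n-1)\sqrt{K}\coth(\sqrt{K}\,r)\,U'$, and for your $U$ both terms are negative. In particular the $\coth$ term cannot be ``discarded'' as you do; it enters with the wrong sign, and your specific choice $C=\tfrac12\log(\lambda_1\epsilon^4/\lambda_2)$ no longer suffices.

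The fix is immediate and brings you to precisely the paper's argument: since $\sqrt{K}\,r\coth(\sqrt{K}\,r)$ is bounded on $[0,\epsilon]$, one gets
\[
\Delta_{\mathbb{H}^n_K}\tilde U \;\geq\; -\,\frac{c}{(\epsilon^2-r^2)^2}
\]
for some $c=c(n,K,\epsilon)$ (absorbing $\lambda_1$ into $c$ as well, since $\lambda_1\leq \lambda_1\epsilon^4/(\epsilon^2-r^2)^2$). Now choose $C$ large enough that $\lambda_2 e^{2C}\geq c$; then $\lambda_2 e^{2\tilde U}=\lambda_2 e^{2C}/(\epsilon^2-r^2)^2\geq c/(\epsilon^2-r^2)^2$, and the desired inequality $\Delta_{\mathbb{H}^n_K}\tilde U\geq \lambda_1-\lambda_2 e^{2\tilde U}$ follows. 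This is verbatim the structure of the paper's proof. (The larger $C$ also guarantees $U>0$, which the statement asks for.)
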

\begin{proof}
By scaling $\lambda_1, \lambda_2$ it is enough to show the result in the case when $K=1$. Let $k>0$ and $p \in \mathbb{N}$ an integer greater than $0$. We shall show that for sufficiently large $k$, the function
\begin{equation}
U=p \log \left( \frac{k}{\sinh^2(\epsilon) - \sinh^2(r)} \right),
\end{equation}
where $r(x)=\dist(x,o)$, satisfies the conditions in the theorem. To show that we compute, using 
\begin{equation}
\Delta U = 2p \ \frac{(n-1)\cosh^4(r) + \cosh^2(r) - (n+1)\cosh^2(\epsilon) \cosh^2(r) + \cosh^2(\epsilon)}{(\sinh^2(r)-\sinh^2(\epsilon) )^2 }.
\end{equation} 
As the absolute value of the numerator is bounded for $r \in [0, \epsilon]$, we have that
\begin{equation}
\Delta_{\mathbb{H}^n_1} U - \lambda_1 \geqslant - \frac{c}{(\sinh^2(r)-\sinh^2(\epsilon) )^2},
\end{equation}
for a constant $c=c(p,n,\epsilon,\lambda_1)$ depending only on these numbers. Thus, for $k^{2p} \geqslant - \tfrac{c \sinh(\epsilon)^{2p-2}}{\lambda_2}$ we have
\begin{equation}
\Delta_{\mathbb{H}^n_1} U - \lambda_1 \geqslant - \frac{\lambda_2 k^{2p} }{(\sinh^2(r)-\sinh^2(\epsilon) )^{2p}} = - \lambda_2 e^{2U}.
\end{equation}
\end{proof}

We now put these two lemma's together in the proof of the following interior $L^{\infty}$ estimate.

\begin{proposition}\label{prop:A_Priori_Upper_Bound}
Let $\Omega \subset \Sigma$ be a compact set and $f$, $g$ real valued function on $\Omega$, with $h>0$ being strictly positive. Then, there is a positive constant $C>0$, depending only on the metric, $\Omega$, $\sup_{\Omega} g$ and $\inf_{\Omega} h$, with the following significance. Any solution $f$ to the Kazdan--Warner equation
\begin{equation}
\Delta f = g-h e^{2f},
\end{equation}
on $\Omega$ satisfies $f \leqslant C$.
\end{proposition}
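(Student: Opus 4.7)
The plan is to dominate $f$ from above, pointwise on $\Omega$, by a local hyperbolic barrier centered at each point. Set $\lambda_1 := \sup_\Omega g$ and $\lambda_2 := \inf_\Omega h > 0$. Because $g \leq \lambda_1$ and $h \geq \lambda_2$ on $\Omega$, the equation $\Delta f = g - h e^{2f}$ immediately yields $\Delta f \leq \lambda_1 - \lambda_2 e^{2f}$, so $f$ is a sub-solution on $\Omega$ of the model Kazdan--Warner equation $\Delta u = \lambda_1 - \lambda_2 e^{2u}$.

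Compactness of $\Omega$ provides a uniform lower sectional curvature bound $\mathrm{Sec}(g_\Sigma) \geq -K$ and a uniform positive lower bound $\epsilon > 0$ on the injectivity radius of $g_\Sigma$ on a neighborhood of $\Omega$; both constants depend only on $g_\Sigma$ and $\Omega$. Fix $p \in \Omega$ and apply \Cref{lem:Hyperbolic_Supersolution} with this $\epsilon$, $K$, and the constants $\lambda_1, \lambda_2$, obtaining a radial function $U : [0, \epsilon) \to \mathbb{R}^+$ which is increasing in $r$ (as is clear from its explicit form) and satisfies $U(r) \to +\infty$ as $r \to \epsilon^-$, whose radial lift $\tilde{U}_{\mathbb{H}}(x) := U(\dist_{\mathbb{H}^2_K}(o, x))$ is a super-solution of the model equation on the hyperbolic ball of radius $\epsilon$. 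Define the corresponding local barrier on $B_\epsilon(p) \subset \Sigma$ by $\tilde{U}(x) := U(\dist_\Sigma(p, x))$.

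The main technical step --- and the only real difficulty --- is to upgrade $\tilde{U}$ from a super-solution on $\mathbb{H}^2_K$ to one on $(\Sigma, g_\Sigma)$. Since $\epsilon$ is below the injectivity radius, $r = \dist_\Sigma(p, \cdot)$ is smooth on $B_\epsilon(p) \setminus \{p\}$, and the Laplace comparison theorem (in the positive Laplacian convention used throughout the paper) yields $\Delta_\Sigma r \geq \Delta_{\mathbb{H}^2_K} r$ from the curvature bound $\mathrm{Sec}(g_\Sigma) \geq -K$. Combined with the radial chain-rule identity $\Delta_\Sigma \tilde{U} = -U''(r) + U'(r) \Delta_\Sigma r$ and the monotonicity $U' \geq 0$, this gives
\begin{equation}
\Delta_\Sigma \tilde{U} \;\geq\; -U''(r) + U'(r) \Delta_{\mathbb{H}^2_K} r \;=\; \Delta_{\mathbb{H}^2_K} \tilde{U}_{\mathbb{H}} \;\geq\; \lambda_1 - \lambda_2 e^{2\tilde{U}}
\end{equation}
on $B_\epsilon(p)$, where the last inequality is the content of \Cref{lem:Hyperbolic_Supersolution}.

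To conclude, for each $r' < \epsilon$ sufficiently close to $\epsilon$, continuity of $f$ together with the blow-up $\tilde{U}(x) \to +\infty$ as $\dist_\Sigma(p, x) \to \epsilon^-$ forces $f < \tilde{U}$ on $\partial B_{r'}(p)$; applying the comparison principle of \Cref{lem:Comparison_Sub_Super_Solutions} on $B_{r'}(p)$, with super-solution $\tilde{U}$ and sub-solution $f$, yields $f \leq \tilde{U}$ throughout $B_{r'}(p)$. Evaluating at the center gives $f(p) \leq \tilde{U}(p) = U(0)$, and since this constant depends only on $\epsilon, K, \lambda_1, \lambda_2$ --- and hence only on $g_\Sigma$, $\Omega$, $\sup_\Omega g$, and $\inf_\Omega h$ --- and since $p \in \Omega$ was arbitrary, this is the desired uniform upper bound.
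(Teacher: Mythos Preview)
Your proof is correct and follows essentially the same route as the paper: choose $\lambda_1 = \sup_\Omega g$, $\lambda_2 = \inf_\Omega h$, transplant the hyperbolic barrier of \Cref{lem:Hyperbolic_Supersolution} to $B_\epsilon(p) \subset \Sigma$ via Laplacian comparison, and then invoke \Cref{lem:Comparison_Sub_Super_Solutions} to conclude $f(p) \leq U(0)$. You are in fact a bit more careful than the paper on two points --- you make explicit the role of $U' \geq 0$ in the comparison step, and you apply \Cref{lem:Comparison_Sub_Super_Solutions} on $B_{r'}(p)$ with $r' \nearrow \epsilon$ rather than directly at the blow-up boundary --- but the argument is the same.
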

\begin{proof}
Given that $h>0$ on $\Omega$, which is compact, we can find a positive constant $\lambda_2$ such that $h>\lambda_2$. Similarly, there is a positive constant $\lambda_1$ so that $g \leqslant \lambda_1$ on $\Omega$. Now let $p$ be a point in $\Omega$ and $B_{\epsilon}(p)$ be a small convex geodesic ball of radius $\epsilon$ centered at $p$. Using $r(x)=\dist(p,x)$ to denote the distance function to $p$, we shall consider on $B_{\epsilon}(p)$ the function
\begin{equation}
u(x)=U(r(x)),
\end{equation}
where $U$ denotes the function from \Cref{lem:Hyperbolic_Supersolution}. Once again by compactness, we have that there is a constant $K >0$ so that $\Ric_{\Sigma} \geqslant - (n-1)K g_{\Sigma}$ in $\Omega$. Thus, the Laplacian comparison theorem, together with the definitions above, gives
\begin{eqnarray}
\Delta u - g + he^{2u} \geqslant \Delta_{\mathbb{H}^n_K} \tilde{U} - \lambda_1 + \lambda_2 e^{2\tilde{U}} \geqslant 0.
\end{eqnarray}
Hence, $u$ is a supersolution to the Kazdan-Warner equation on $B_{\epsilon}(p)$, which satisfies $u \vert_{\partial \overline{B_{\epsilon}(p)}} = + \infty$. Therefore, by \Cref{lem:Comparison_Sub_Super_Solutions} we have that any solution $f$ to the Kazdan-Warner equation on $\Omega$ must satisfy $f \vert_{B_{\epsilon}(p)} \leqslant u \vert_{B_{\epsilon}(p)}$. In particular $f(p) \leqslant u(p)= U(0)$.
\end{proof}

Before stating the main result of this section we shall introduce the following notion.

\begin{defn}
A real valued function $g$ on $\Sigma$ is said to have positive average outside a compact set if there is a compact $\Omega \subset \Sigma$, such that for any compact $\Omega' \supset \Omega$ we have $\int_{\Omega'} g >0$.
\end{defn}

We finally come to the statement and proof of the existence part of the main result of this section. We state this here as

\begin{thm}\label{thm:Main_2}
Suppose that $g$ has positive average outside a compact set of $\Sigma$, $h >0$, and that $\frac{g}{h}\geqslant a >0$ at the ends of $\Sigma$. Then, there is a smooth solution $f$ to the Kazdan--Warner equation
\begin{equation}
\Delta f = g-h e^{2f}.
\end{equation}
\end{thm}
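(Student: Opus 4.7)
My plan is to obtain $f$ as a limit of solutions to Dirichlet problems on a compact exhaustion of $\Sigma$. Where the argument of \Cref{thm:Intro_KW_Main} relied on a global supersolution built from a positive lower bound of $h/g$ near the ends, the crucial new ingredient here will be the interior upper bound of \Cref{prop:A_Priori_Upper_Bound}, which removes the need for any such supersolution.

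First I would build a global subsolution $u_-\in C^\infty(\Sigma)$. The construction in the proof of \Cref{thm:Intro_KW_Main} goes through in our setting: I would pass to the conformal compactification $\overline{\Sigma}$ and modify $\kappa^{-1}g$, $\kappa^{-1}h$ to smooth functions $\overline{g}$, $\overline{h}$ which agree with them away from small neighborhoods of $\overline{\Sigma}\setminus\Sigma$ and vanish there. The hypothesis that $g$ has positive average outside a compact set guarantees $\int_{\overline{\Sigma}}\overline{g}\,\dvol_{\overline{\Sigma}}>0$ once those neighborhoods are sufficiently small, so \Cref{prop:KW_2} yields a solution of the modified equation whose shift by an appropriate constant is a subsolution $u_-$ of the original Kazdan--Warner equation on $\Sigma$. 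This is exactly the maneuver explained in the footnote after \eqref{eq:KW_Modified_Compact}, and it uses only that $g/h\geqslant a>0$ at the ends.

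Next I would fix a relatively compact exhaustion $\Omega_n\Subset\Omega_{n+1}\Subset\Sigma$ by open sets with smooth boundary and, on each $\Omega_n$, solve the Dirichlet problem
\begin{equation*}
\Delta f_n = g - h\,e^{2f_n}\text{ on }\Omega_n,\qquad f_n|_{\partial\Omega_n}=M_n,
\end{equation*}
where $M_n$ is chosen large enough that both $h\,e^{2M_n}\geqslant g$ pointwise on $\overline{\Omega_n}$ and $M_n\geqslant\sup_{\overline{\Omega_n}}u_-$; this is possible because $h>0$ on the compact set $\overline{\Omega_n}$. The constant $M_n$ then serves as an upper barrier and $u_-$ as a lower barrier, so \Cref{lem:Comparison_Sub_Super_Solutions} combined with the standard monotone iteration scheme produces a smooth $f_n$ with $u_-\leqslant f_n\leqslant M_n$ on $\Omega_n$.

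The key step will then be the passage to the limit. \Cref{prop:A_Priori_Upper_Bound}, applied on any fixed compact set $K\subset\Sigma$ with $K\subset\Omega_n$, bounds $f_n|_K$ from above by a constant depending only on the geometry of $K$, on $\sup_K g$, and on $\inf_K h>0$, but crucially \emph{independent} of $n$ and of the boundary datum $M_n$. Combined with the global lower bound $f_n\geqslant u_-$, this yields uniform $L^\infty$ bounds for $\{f_n\}$ on compact subsets of $\Sigma$; interior Schauder estimates applied to $\Delta f_n=g-he^{2f_n}$ then give uniform $C^{k,\alpha}_{\loc}$ bounds for every $k$, and a diagonal Arzelà--Ascoli argument extracts a subsequence converging in $C^\infty_{\loc}$ to a smooth $f$ solving the Kazdan--Warner equation on $\Sigma$. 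The main obstacle is exactly the lack of a global supersolution; \Cref{prop:A_Priori_Upper_Bound}, modeled on Li's barrier construction for the Yamabe problem, is designed precisely to substitute an interior a priori upper bound for that missing global supersolution, after which the compact-exhaustion and elliptic-compactness part of the argument is routine.
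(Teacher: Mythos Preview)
Your proposal is correct and follows essentially the same architecture as the paper: build a global subsolution via the compactification argument of \Cref{thm:Intro_KW_Main} (using only $g/h\geqslant a>0$ at the ends and positive average of $g$ outside a compact set), produce solutions on a compact exhaustion, replace the missing global supersolution by the interior a priori upper bound of \Cref{prop:A_Priori_Upper_Bound}, and pass to the limit by elliptic bootstrapping and a diagonal argument.

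The one tactical difference is worth recording. To obtain the solutions $f_n$ on $\Omega_n$, the paper extends $g|_{\Omega_n}$, $h|_{\Omega_n}$ to compactly supported $g_n,h_n$ on $\overline{\Sigma}$ and solves the modified Kazdan--Warner equation on the closed surface $\overline{\Sigma}$ via \Cref{prop:KW_2}; its $f_i$ are therefore globally defined on $\overline{\Sigma}$, which is later exploited in the proof of \Cref{prop:Solution_to_KW_in_L1}. Your route---solving a Dirichlet problem on $\Omega_n$ with a large constant boundary value $M_n$ serving as the supersolution---is more elementary and avoids returning to the compactification at this stage, at the cost of producing $f_n$ only on $\overline{\Omega_n}$. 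For the bare existence statement of \Cref{thm:Main_2} this is entirely adequate; the interior estimate of \Cref{prop:A_Priori_Upper_Bound} does not care how $f_n$ was produced.
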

\begin{proof}
The conditions guarantee that a subsolution $f_-$ exist. It can be constructed as in the proof of \Cref{thm:Intro_KW_Main} and we now recall the construction. We consider the compactification $\overline{\Sigma}$ together with the equation
\begin{eqnarray}\label{eq:KW_Modified_Compact_2}
\overline{\Delta} f = \overline{g} - \overline{h} e^{2 f},
\end{eqnarray}
where, recall, $\overline{g}$, $\overline{h}$ are equal to $\kappa^{-1}g$, $\kappa^{-1}h$ in $\overline{\Sigma} \backslash \cup_{i=1}^k B_{r_i}(p_i)$ and vanish in $\cup_{i=1}^k B_{r_i}(p_i)$. As $g$ has positive average outside a compact set of $\Sigma$, for $r_i>0$ sufficiently small so that $\overline{g}$ and $\overline{h}$ are well defined, we have that
\begin{equation}
\int_{\overline{\Sigma}} \overline{g} \dvol_{\overline{\Sigma}} = \int_{\overline{\Sigma} \backslash \cup_{i=1}^k B_{r_i}(p_i)} k^{-1}g \dvol_{\overline{\Sigma}} = \int_{\overline{\Sigma} \backslash \cup_{i=1}^k B_{r_i}(p_i)} g \dvol_{\Sigma} > 0.
\end{equation}
Then \Cref{thm:Intro_KW_Main} and \Cref{prop:KW_2} apply to \cref{eq:KW_Modified_Compact_2}, and as in their proof we construct a subsolution $f_-$ to this equation which depends only on $\overline{\Sigma}$, $\overline{g}$ and $\overline{h}$\footnote{in fact we construct a subsolution for each such combination of $r_i$}. It is also possible to show, as in the proof of \Cref{thm:Intro_KW_Main}, that the condition that $g \geqslant a h >0$ along the end ensures it is possible to subtract to $f_-$ a positive constant so that the resulting $f_-$ is also a subsolution to the Kazdan--Warner equation $\Delta f = g - h e^{2f}$ on $\Sigma$. Moreover, the resulting $f_-$ is bounded.\\
We now consider a compact exhaustion $\lbrace \Omega_i \rbrace_{i \in \mathbb{N}_0}$ of $\Sigma$. In each $\Omega_i$ we shall construct a supper-solution to the Kazdan--Warner equation in the statement, by setting $g_i$ (respectively $h_i$) to be functions on $\overline{\Sigma} \supset \Sigma \supset \Omega_i$ which equal $g$ (respectively $h$) in $\Omega_i$ and vanish outside a small neighbourhood of $\Omega_i$. Then, we consider the equation
\begin{eqnarray}\nonumber
\overline{\Delta} f_i = \kappa^{-1} g_i - \kappa^{-1} h_i e^{2f_i},
\end{eqnarray}
on $\overline{\Sigma}$ for which we can construct a supersolution $(f_i)_+$ as in the proof of \Cref{prop:KW_2} (see also the proof of \Cref{thm:Intro_KW_Main}). By construction, the resulting $(f_i)_+$ is also a supersolution to the Kazdan--Warner equation $\Delta f = g-he^{2f}$ in $\Omega_i$. Moreover, we can change $(f_i)_+$ by adding to it any positive constant, and thus may assume that $(f_i)_+ > f_-$. Hence, there is a solution $f_i$ to the Kazdan--Warner equation
\begin{equation}
\Delta f_i = g - h e^{2f_i}, \ \ \text{in $\Omega_i$,}
\end{equation}
satisfying $f_- \leqslant f_i \leqslant (f_i)_+$. Notice that while the lower bound on $f_i$ is independent of $i \in \mathbb{N}_0$, the upper bound given above does depend on $i$. However, if $\Omega \subset \Omega_i$ is a compact domain, we can use \Cref{prop:A_Priori_Upper_Bound} to conclude that there is a constant $C$, depending only on $\Omega$, $g\vert_{\Omega}$, and $h \vert_{\Omega}$, so that
\begin{equation}
\inf_{x \in \Omega} f_-(x)  \leqslant f_i \leqslant C.
\end{equation}
In other words, we have a bound 
\begin{equation}
\Vert f_i \Vert_{L^{\infty}(\Omega)} \leqslant C,
\end{equation}
for some, possibly different constant $C>0$, independent of $i \in \mathbb{N}_0$. Thus, we also have an $i$-independent bound $\Vert \Delta f_i \Vert_{L^{\infty}(\Omega)} \leqslant C$, which implies similar $L^p(\Omega)$ bounds. The Calderon--Zigmund inequality yields that in a compact $\tilde{\Omega} \subset \Omega$ we have
\begin{equation}
\Vert f_i \Vert_{L^{2,p}(\tilde{\Omega})} \leqslant C,
\end{equation}
and the Sobolev embedding theorem that there is $\alpha \in (0,1)$ such that $\Vert f_i \Vert_{C^{1, \alpha}(\tilde{\Omega})} \leqslant C$. Again, the Kazdan--Warner equation gives $\Vert \Delta f_i \Vert_{C^{1, \alpha}(\tilde{\Omega})} \leqslant C$ and the Schauder estimates that
\begin{equation}
\Vert f_i \Vert_{C^{3, \alpha}(\tilde{\Omega})} \leqslant C,
\end{equation}
by further shrinking $\tilde{\Omega}$. The compactness of the Sobolev embedding $C^{3, \alpha} \hookrightarrow C^{2, \alpha}$ guarantee that there is a subsequence of $\lbrace f_i \rbrace_{i \in \mathbb{N}_0}$ strongly converging in $C^{2, \alpha}(\tilde{\Omega})$ to a limit $f$ which does solves the Kazdan--Warner equation. Given that the $\Omega_i$ exhaust $\Sigma$, for any compact set $\tilde{\Omega}$ there is $I \in \mathbb{N}_0$ such that $\tilde{\Omega} \subset \Omega_i$ for all $i >I$ and the previous argument applies to show that the $f_i$ converge in $C^{2, \alpha}(\tilde{\Omega})$ to a function $f$ in $\tilde{\Omega}$, which solves the Kazdan--Warner equation in the statement. The smoothness of $f$ follows from using the Calderon--Zigmund inequalities to iterate the argument given above.
\end{proof}

Having in mind the proof of the uniqueness in \Cref{prop:KW_2}, as well as the applications to the vortex equation, it is important to know when the solution $f$ to the Kazdan-Warner equation constructed in the previous result is such that $\int_{\Sigma} \Delta f =0$. Having in mind \Cref{lem:Integral_Laplacian_Vanishes}, for that, it would be enough to guarantee that $f \in L^1$. The following result gives conditions under which $f \in L^1$.

\begin{prop}\label{prop:Solution_to_KW_in_L1}
Suppose that all the ends of $(\Sigma,g_{\Sigma})$ are parabolic\footnote{In particular, this implies that $\Vol(\Sigma)<+\infty$}, that $g \in L^1$ has positive average outside a compact set, $h >0$, and $\frac{g}{h}\geqslant a >0$ at the ends of $\Sigma$. Then, the smooth solution $f$ to the Kazdan--Warner equation constructed in \Cref{thm:Main_2} is in $L^1$.
\end{prop}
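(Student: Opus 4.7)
The approach is to bound $f^-$ and $f^+$ separately, using the bounded subsolution $f_-$ from the construction in \Cref{thm:Main_2} together with an integration-by-parts identity derived from the Kazdan--Warner equation.

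The lower bound is immediate: from the proof of \Cref{thm:Main_2}, $f \geq f_-$ where $f_-$ is bounded on $\Sigma$, and since all ends of $\Sigma$ are parabolic we have $\Vol(\Sigma, g_\Sigma) < \infty$. Hence $f^- = \max(-f, 0) \leq \|f_-\|_{L^{\infty}}$ yields $f^- \in L^{\infty}(\Sigma) \subset L^1(\Sigma)$.

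To bound $f^+$, I first establish $h e^{2f} \in L^1(\Sigma)$. Fix $p_0 \in \Sigma$ and take cutoffs $\chi_R$ with $\chi_R \equiv 1$ on $B_R(p_0)$, supported in $B_{2R}(p_0)$, and satisfying $|\Delta\chi_R|\leq cR^{-2}$, as in \Cref{lem:Integral_Laplacian_Vanishes}. Multiplying the Kazdan--Warner equation by $\chi_R$ and integrating by parts gives
\begin{equation}
\int_{\Sigma}\chi_R h e^{2f}\,\dvol_{\Sigma} = \int_{\Sigma}\chi_R g\,\dvol_{\Sigma} - \int_{\Sigma} f\,\Delta\chi_R\,\dvol_{\Sigma}.
\end{equation}
The first term on the right is bounded by $\|g\|_{L^1}$. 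Writing $f = f^+ - f^-$ in the error term, the contribution of $f^-$ is bounded by $\|f_-\|_{L^{\infty}}\cdot cR^{-2}\,\Vol(\Sigma) \to 0$ as $R \to \infty$. The $f^+$ contribution is controlled using the pointwise inequality $f^+\leq \tfrac12 e^{2f}$ (valid because $2x \leq e^{2x}$ for $x\geq 0$, and trivially when $x<0$), and the resulting term is absorbed into the left-hand side using a positive lower bound for $h$ on the annular region $B_{2R}\setminus B_R$ for $R$ large. Passing to the limit via monotone convergence yields $\int_{\Sigma}h e^{2f} \leq C\|g\|_{L^1}$.

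Given $h e^{2f}\in L^1(\Sigma)$, the same pointwise bound $f^+ \leq \tfrac12 e^{2f}$ shows that on any compact $K \subset \Sigma$ with $\inf_K h > 0$ one has $\int_K f^+ \leq (2\inf_K h)^{-1} \int_K h e^{2f} < \infty$. To treat a neighbourhood of each parabolic end, where $h$ may degenerate, I would construct localized upper barriers along the lines of \Cref{prop:A_Priori_Upper_Bound}, using \Cref{lem:Hyperbolic_Supersolution} on the compactification $(\overline{\Sigma}, g_{\overline{\Sigma}})$ and exploiting that the hypotheses $g \in L^1$ and $g/h \geq a > 0$ at the ends imply $h \leq g/a \in L^1$ near each puncture. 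Combining these estimates gives $f^+ \in L^1(\Sigma)$ and hence $f \in L^1(\Sigma)$.

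The main technical obstacle is this last step: \Cref{prop:A_Priori_Upper_Bound} produces a pointwise upper bound whose constant depends on $\inf_\Omega h$, which is not uniformly controlled near the parabolic ends. Salvaging a uniform estimate there requires passing to $(\overline{\Sigma}, g_{\overline{\Sigma}})$ via the conformal factor $\kappa$, where the transformed quantities $\bar{g} = \kappa^{-1}g$ and $\bar{h} = \kappa^{-1}h$ still satisfy $\bar g/\bar h \geq a$, and using this ratio bound in an essential way to produce a barrier in a punctured disk around each point of $\overline{\Sigma}\setminus\Sigma$.
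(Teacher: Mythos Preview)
Your strategy is direct (split $f=f^+-f^-$, bound each piece via barriers and an integrated identity), whereas the paper argues indirectly by contradiction, working not with $f$ but with the approximating sequence $f_i$ from the proof of \Cref{thm:Main_2}. Two genuine gaps keep your argument from closing.

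\textbf{The absorption step fails.} On the annulus $B_{2R}\setminus B_R$ the cutoff $\chi_R$ runs from $0$ to $1$, so the error $\tfrac{c}{2R^2}\int_{B_{2R}\setminus B_R} e^{2f}$ cannot be dominated by $\int_\Sigma \chi_R h e^{2f}$ unless $h$ is bounded below on that annulus by something of order $R^{-2}$. The hypotheses give only $h>0$ pointwise and, near the ends, $h\leqslant g/a$; since $g\in L^1$ and is positive there, $h$ may decay arbitrarily fast and no such lower bound exists. Even granting one, the inequality you obtain has the form $\int_{B_R}h e^{2f}\leqslant C+\tfrac12\int_{B_{2R}}h e^{2f}$, which iterates in the wrong direction and does not bound $\int_\Sigma h e^{2f}$.

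\textbf{The end barrier is not constructed.} You acknowledge this yourself. Passing to $(\overline\Sigma,g_{\overline\Sigma})$ does not rescue \Cref{prop:A_Priori_Upper_Bound}: the transformed equation has $\kappa^{-1}h$ in front of the exponential, and nothing prevents this from vanishing at the punctures. The only structural information available there is the one-sided bound $g/h\geqslant a$, which controls the subsolution side but not a supersolution of the type in \Cref{lem:Hyperbolic_Supersolution}.

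For contrast, the paper never attempts to bound $f$ near infinity. It supposes $m_i=\|f_i\|_{L^1(\Sigma)}\to\infty$, sets $u_i=f_i/m_i$, and uses $\overline\Delta u_i\leqslant \kappa^{-1}g_i/m_i$ together with a sub-mean-value argument on the compact $\overline\Sigma$ (and the Sobolev embedding $L^{2,1}(\overline\Sigma)\hookrightarrow C^0$) to get a uniform $L^\infty$ bound on the $u_i$. Meanwhile the interior estimate of \Cref{prop:A_Priori_Upper_Bound}, applied only on fixed compact $\Omega\subset\Sigma$ where $\inf_\Omega h>0$ is automatic, gives $\|u_i\|_{L^\infty(\Omega)}\leqslant C(\Omega)/m_i\to 0$. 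Finite volume then contradicts $\|u_i\|_{L^1(\Sigma)}=1$. The point is that the contradiction scheme lets one use \Cref{prop:A_Priori_Upper_Bound} exclusively on interior compacta, sidestepping exactly the degeneracy of $h$ at the ends that blocks your direct approach.
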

\begin{proof}
Let $f_i$ be the sequence of solutions to the equation in $\Omega_i$ constructed in the course of the proof. Recall that these are defined in $\overline{\Sigma}$ and there satisfy the equation 
\begin{equation}
\overline{\Delta} f_i = \kappa^{-1}g_i - \kappa^{-1}h_i e^{2f_i},
\end{equation}
where the $g_i,h_i$ are smooth extensions of $g |_{\Omega_i},h |_{\Omega_i}$ which vanish outside a small neighbourhood of $\Omega_{i}$. We shall argue by contradiction and suppose that the sequence $m_i= \| f_i \|_{L^{1}(\Sigma)}$ satisfies $\limsup_{i \rightarrow + \infty} m_i = + \infty$. Then, we consider the functions
\begin{equation}
u_i = \frac{f_i}{m_i}, \ \ \| u_i \|_{L^1}=1.
\end{equation}
Our first step will be to show that these $u_i$ are in fact uniformly bounded in all of $\Sigma$. To do this it is convenient to work in the compactification $\overline{\Sigma}$. As $h >0$ by assumption, the functions $h_i$ can be chosen to be non-negative; further using the hypothesis that $g$ is bounded, the $u_i$ satisfy 
\begin{equation}
\overline{\Delta} u_i = \frac{g_i}{\kappa m_i} - \frac{h_i}{\kappa m_i} e^{2f_i} \leqslant  \frac{g_i}{\kappa m_i}.
\end{equation}
Now, given $p \in \overline{\Sigma}$ and $r>0$ let $B_{r}(p)$ the the radius $r$ ball in $\overline{\Sigma}$ centered around $x$ and denote by $v_i$ the harmonic function in $B_r(p)$ with $v |_{\partial B_r(p)}= u_i$. Then, $w_i=u_i- v_i$ satisfies 
\begin{equation}
\overline{\Delta} w_i \leqslant \frac{g_i}{\kappa m_i} , \ \ w |_{\partial B_r(p)} =0,
\end{equation}
and the Calderon--Zygmund inequalities, together with the Sobolev embedding $L^{2,1}(\overline{\Sigma}) \hookrightarrow C^0(\overline{\Sigma})$, yields  
\begin{equation} u_i (p) \leqslant v_i(p) + \frac{\| \kappa^{-1}g_i \|_{L^1(\overline{\Sigma})}}{m_i} \leqslant  \frac{C}{r^{n-1}}\int_{\partial B_r(p)} u_i + \frac{\| g \|_{L^1(\Sigma)}}{m_i},
\end{equation}
and recall that $g \in L^1$, so that $\| \kappa^{-1}g \|_{L^1(\overline{\Sigma})}= \| g \|_{L^1(\Sigma)} < + \infty$. We may then choose a small $r$ so that
\begin{equation}
u_i (p) \leqslant \frac{C}{r^{n}}\int_{\overline{\Sigma}} |u_i| + \frac{\| g \|_{L^1(\Sigma)}}{m_i} \leqslant C \left( \| k \|_{L^{\infty}} + \frac{\| g \|_{L^1(\Sigma)}}{m_i} \right).
\end{equation}
Given that the point $p \in \overline{\Sigma}$ is arbitrary, the $u_i$ are uniformly bounded in sup-norm in all of $\Sigma$. Furthermore, recall from the proof of \Cref{thm:Main_2} that for any compact set $\Omega \subset \Omega_i$, there is a constant $C(\Omega)$, independent of $i$, such that $\| f_i \|_{L^{\infty}(\Omega)} \leqslant C(\Omega)$. Thus,
\begin{equation}
\| u_i \|_{L^{\infty}(\Omega)} \leqslant  \frac{C(\Omega)}{m_i} \rightarrow 0,
\end{equation}
as $i \rightarrow + \infty$, and so $u_i \rightarrow 0$ uniformly on compact subsets of $\Sigma$.\\
On the other hand, given that $\| u_i \|_{L^1(\Sigma)}=1$ we may consider $L^1(\Sigma)$-weak limit of the $u_i$. This is a function $u_{\infty} \in L^1$ which must vanish identically by the above. As we shall now, this leads to a contradiction with the fact that $u_i \rightharpoonup u_{\infty}$ and $\| u_i \|_{L^1(\Sigma)}=1$. Indeed, integrating $u_i$ against the characteristic function of a subset.\footnote{We can see subsets through their characteristic functions as elements of $L^{\infty} \subset (L^1)^*$ via the integration pairing.}Hence, for any $\Omega \subset \Sigma$
\begin{equation}
\int_{\Omega} | u_i | \rightarrow \int_{\Omega} |u_{\infty}|.
\end{equation}
Given the uniform $L^{\infty}$-bound on the $u_i$ and the finite volume assumption, for sufficiently large $\Omega \subset \Sigma$ we have $\int_{\Sigma \backslash \Omega} |u_i| <1/2$ and so $\int_{\Omega} | u_{i} |>1/2$. Then, taking the limit as $i \rightarrow \infty$ yields
\begin{equation}
\int_{\Omega} |u_{\infty}| = \lim_{i \rightarrow + \infty} \int_{\Omega} |u_{i}| = \lim_{i \rightarrow + \infty} \int_{\Sigma} |u_{i}| - \lim_{i \rightarrow + \infty} \int_{\Sigma \backslash \Omega_i} |u_{i}| > 1 - \frac{1}{2} >0.
\end{equation}
As claimed, this contradicts the fact that $u_i$ converges uniformly to $0$.
\end{proof}

Finally, in the conditions of the previous proposition we can prove the uniqueness of the solutions constructed in \Cref{thm:Main_2} and further improve the $L^1$-bound to an $L^{\infty}$ one. This finalizes the proof of \Cref{thm:Intro_Main_2}.

\begin{cor}\label{cor:Solution_KW_C0}
In the conditions of \Cref{prop:Solution_to_KW_in_L1}, the solution $f$ to the Kazdan-Warner constructed in \Cref{thm:Main_2} continuously extends over $\overline{\Sigma}$, in particular it is bounded. Furthermore, $f$ is the unique solution satisfying $\int_{\Sigma} \Delta f=0$.
\end{cor}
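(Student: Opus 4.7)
The plan is to (i) upgrade the $L^{1}(\Sigma)$ bound from \Cref{prop:Solution_to_KW_in_L1} to a continuous extension of $f$ across $\overline{\Sigma}$, and then (ii) establish uniqueness by adapting the maximum-principle argument of \Cref{prop:KW_2} to the compactification.

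For (i), I pass to the closed surface $\overline{\Sigma}$, on which $f$ satisfies $\overline{\Delta} f = \kappa^{-1}g - \kappa^{-1}h\,e^{2f}$. Since $f\in L^{1}(\Sigma)$, \Cref{lem:Integral_Laplacian_Vanishes} yields $\int_{\Sigma}\Delta f\,\dvol_\Sigma = 0$, and hence the integral identity $\int_\Sigma h\,e^{2f}\,\dvol_\Sigma = \int_\Sigma g\,\dvol_\Sigma<\infty$. Converting volumes via $\dvol_{\overline{\Sigma}}=\kappa\,\dvol_\Sigma$, both $\kappa^{-1}g$ and $\kappa^{-1}h\,e^{2f}$ lie in $L^{1}(\overline{\Sigma})$, so $\overline{\Delta} f \in L^{1}(\overline{\Sigma})$. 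Because in two real dimensions the Green kernel of $\overline{\Delta}$ on the closed surface has only a logarithmic singularity, Young's inequality promotes $f$ to $L^{p}(\overline{\Sigma})$ for every $p<\infty$. To pass from integrability to continuity at each puncture $p_i\in\overline{\Sigma}\setminus\Sigma$, I would combine the local upper bound of \Cref{prop:A_Priori_Upper_Bound} on small annular neighbourhoods of $p_i$ (where $\sup g$ and $\inf h$ are controlled by the smoothness of $g,h$ on $\Sigma$ together with the condition $g/h\geqslant a>0$) with the uniformly bounded subsolution $f_-$ produced in the proof of \Cref{thm:Main_2}, thereby sandwiching $f$ by two bounded functions on all of $\overline{\Sigma}$; standard Schauder estimates applied to the compactified equation then extend $f$ continuously across $p_i$. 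The principal technical obstacle is exactly this last step, since the source $\kappa^{-1}(g-h\,e^{2f})$ is a priori singular at the punctures, and controlling it requires the integral bound $\int_\Sigma h\,e^{2f}\,\dvol_\Sigma<\infty$ to rule out concentration of $e^{2f}$ there.

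For (ii), suppose $f_1,f_2$ are two such solutions, both satisfying $\int_\Sigma \Delta f_j = 0$. By (i), each extends continuously to the compact surface $\overline{\Sigma}$, so $F:=f_1-f_2$ is continuous on $\overline{\Sigma}$. Exactly as in the proof of \Cref{prop:KW_2}, convexity of the exponential gives
$$\Delta F = h\,e^{2f_2}\bigl(1-e^{2F}\bigr) \leqslant -2\,h\,e^{2f_2}\,F,$$
so that $(\Delta + 2\,h\,e^{2f_2})F \leqslant 0$ on $\Sigma$. If $F$ attained a strictly positive maximum at an interior point of $\Sigma$, this inequality would force $\Delta F<0$ in a neighbourhood of that point, contradicting the strong maximum principle; the possibility of the maximum being attained at a parabolic puncture is ruled out by a short barrier argument, for instance using the $g_\Sigma$-harmonic function $\log\vert z\vert$ in a holomorphic coordinate $z$ centred at $p_i$, which tends to $-\infty$ there. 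Hence $F\leqslant 0$ throughout $\overline{\Sigma}$. Integrating $\Delta F = h\,e^{2f_2}(1-e^{2F})$ over $\Sigma$ and using $\int_\Sigma \Delta F = 0$ (which follows from \Cref{lem:Integral_Laplacian_Vanishes}, since $F$ is bounded and $\Vol(\Sigma)<\infty$) gives
$$0 = \int_\Sigma h\,e^{2f_2}\bigl(1-e^{2F}\bigr)\,\dvol_\Sigma;$$
with $F\leqslant 0$ the integrand is pointwise nonnegative, so it vanishes pointwise, forcing $F\equiv 0$ and completing the proof.
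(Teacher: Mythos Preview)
Your uniqueness argument (ii) is essentially the paper's: reduce to the inequality $(\Delta+2he^{2f_2})F\leqslant 0$, rule out a positive maximum, and then integrate using $\int_\Sigma\Delta F=0$. The barrier detail at the punctures is a welcome addition, though note a small circularity: you invoke (i) to say the competing solution $f_2$ extends continuously to $\overline{\Sigma}$, but (i) as you have stated it applies to the \emph{particular} solution produced in \Cref{thm:Main_2}, not to an arbitrary solution with vanishing Laplacian integral.

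Part (i), however, diverges from the paper and carries a genuine gap. The paper obtains the upper bound on $f$ near the punctures by exploiting the sign of the nonlinearity: since $h\geqslant 0$, one has the one--sided differential inequality
\[
\overline{\Delta} f \;=\; \kappa^{-1}g - \kappa^{-1}h\,e^{2f}\;\leqslant\;\kappa^{-1}g,
\]
and $\kappa^{-1}g\in L^{1}(\overline{\Sigma})$ because $g\in L^{1}(\Sigma)$. Comparing with the harmonic function sharing boundary values on a small ball $B_r(p)\subset\overline{\Sigma}$, and using the two--dimensional embedding $L^{2,1}\hookrightarrow C^{0}$, the paper gets $f(p)\leqslant c\bigl(\|f\|_{L^{1}(\Sigma)}+\|g\|_{L^{1}(\Sigma)}\bigr)$ for every $p\in\overline{\Sigma}$. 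The lower bound is already built into the construction via the bounded subsolution $f_-$. The point is that the troublesome term $\kappa^{-1}he^{2f}$ is simply \emph{discarded}, so no control on $h$ near the punctures is needed.

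Your route instead appeals to \Cref{prop:A_Priori_Upper_Bound} for the upper bound, but the constant there depends on $\inf_{\Omega}h$, and under the hypotheses of \Cref{prop:Solution_to_KW_in_L1} one only knows $h>0$ on $\Sigma$ and $g/h\geqslant a$ at the ends; the latter gives an \emph{upper} bound $h\leqslant g/a$, not a positive lower bound, so $\inf h$ may well tend to zero as your annuli approach a puncture and the resulting upper bound can blow up. The subsequent ``standard Schauder estimates'' step is likewise unjustified, since the source $\kappa^{-1}(g-he^{2f})$ is only known to lie in $L^{1}(\overline{\Sigma})$ and need not be H\"older at the punctures. Your own diagnosis (``the principal technical obstacle is exactly this last step'') is correct, but the proposed remedy---the integral bound $\int_\Sigma he^{2f}<\infty$---does not by itself prevent pointwise blow--up of $e^{2f}$; it is precisely the one--sided inequality above that circumvents this.
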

\begin{proof}
The proof that a solution $f$ is bounded is similar to the argument that the $u_i$ in the proof of \Cref{prop:Solution_to_KW_in_L1} are bounded. Indeed, working on the compactification $\overline{\Sigma}$, the function $f$ satisfies
\begin{equation}
\overline{\Delta} f = \kappa^{-1} g - \kappa^{-1} h e^{2f} \leqslant \kappa^{-1} g.  \label[ineq]{ineq:Inequality_Delta_f}
\end{equation}
Then, given $p \in \overline{\Sigma}$ and a small $r>0$ we subtract from $f$ the harmonic function $\tilde{f}$ such that $\tilde{f}|_{\partial B_r(p)}=f|_{\partial B_r(p)}$. Then, there is a sufficiently small $r$ such that 
\begin{equation}
\tilde{f}(p) \leqslant cr^{-(n-1)} \int_{\partial B_r(p)} f \leqslant cr^{-n} \int_{B_{2r}(p)} f \leqslant c r^{-n} \| f \|_{L^1(\overline{\Sigma})} \leqslant c \| \kappa \|_{L^{\infty}} r^{-n} \| f \|_{L^1(\Sigma)}.
\end{equation}
Combining this with \cref{ineq:Inequality_Delta_f}, elliptic inequalities and the Sobolev embedding $L^{2,1}(\overline{\Sigma}) \hookrightarrow C^0(\overline{\Sigma})$, the function $f$ is continuously extends to $\overline{\Sigma}$ and
\begin{equation}
f(p) \leqslant c (\| f\|_{L^1(\Sigma)} + \| \kappa^{-1} g \|_{L^1(\overline{\Sigma})} ) \leqslant c (\| f \|_{L^1(\Sigma)} + \| g \|_{L^1(\Sigma)}).
\end{equation}
As the point $p \in \overline{\Sigma}$ is arbitrary and $\overline{\Sigma}$ is compact we have shown that $\| f\|_{L^{\infty}(\Sigma)} \leqslant c (\| f \|_{L^1(\Sigma)} + \| g \|_{L^1(\Sigma)})$. Given that by \Cref{prop:Solution_to_KW_in_L1} the solutions $f$ are in $L^1$ the proof that $f$ is uniformly bounded is complete.\\
Then, we have $\int_{\Sigma} \Delta f =0$ and the argument in the proof of \Cref{prop:KW_2} shows that $f$ is unique amongst such solutions.
\end{proof}

\subsection{Comparison results for solutions}

Finally, we present below a comparison result for solutions of the Kazdan--Warner equation with different parameters $g$ and $h$.
  
\begin{lem}\label{lem:Difference_F}
Let $K \subset \Sigma$ be a compact set, $g_1, g_2, h_1$, $h_2$ be as in \Cref{thm:Intro_Main_2}, and $f_1, f_2$ be the corresponding solutions to the Kazdan--Warner equation.  Assume, that the $L^\infty$-norm $g_1, g_2, h_1$, and $h_2$ are all bounded by some constant $B$, and that there is an $\epsilon, \delta > 0$ and $p \in \Sigma$, such that $h_i|_{B_p (\epsilon)} > \epsilon$, for $i = 1, 2$.  Then there is a constant $c = c (K, B, p, \epsilon)$, such that the following holds:
\begin{equation}
\| f_1 - f_2 \|_{L_2^2 (K)} \leqslant c ( \|g_1 - g_2 \|_{L^2 (\Sigma)} + \| h_1 - h_2 \|_{L^2 (\Sigma)} ).  \label[ineq]{ineq:Local_L22_Bound}
\end{equation}
\end{lem}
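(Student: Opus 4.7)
The plan is to linearize the equation at $F := f_1 - f_2$, reduce to a linear elliptic equation with non-negative zero-th order term, and then run standard $L^2$ energy estimates followed by interior elliptic regularity. Subtracting the two Kazdan--Warner equations and writing $e^{2f_1} - e^{2f_2} = 2 F V$ with $V := \int_0^1 e^{2(t f_1 + (1-t) f_2)} \, dt$ gives
\[
\Delta F + W F = R, \qquad W := 2 h_1 V, \qquad R := (g_1 - g_2) - (h_1 - h_2) e^{2 f_2}.
\]
The key feature is that $W \geqslant 0$ globally and $W$ is uniformly bounded below by a positive constant on $B_p(\epsilon)$.

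First I would apply the $L^{\infty}$ bound from \Cref{cor:Solution_KW_C0} (which depends only on $B$ and the geometry) to produce $M = M(B, \Sigma)$ with $\|f_i\|_{L^{\infty}(\Sigma)} \leqslant M$. This immediately gives $e^{-4M} \leqslant V \leqslant e^{4M}$, hence $W \geqslant 2\epsilon e^{-4M} =: c_0$ on $B_p(\epsilon)$, $\|W\|_{L^{\infty}(\Sigma)} \leqslant 2 B e^{4M} =: C_1$, and
\[
\|R\|_{L^2(\Sigma)} \leqslant \|g_1 - g_2\|_{L^2(\Sigma)} + e^{2M} \|h_1 - h_2\|_{L^2(\Sigma)}.
\]

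Next I would derive a global $L^2$ bound for $F$. Pairing the linearized equation against $F$ and integrating over $\Sigma$---justified by $F \in L^{\infty} \cap L^2_1(\Sigma)$, which follows from the existence theory of Section~3 and the conformal invariance of the Dirichlet energy in dimension $2$---yields
\[
\|\nabla F\|_{L^2(\Sigma)}^2 + c_0 \|F\|_{L^2(B_p(\epsilon))}^2 \leqslant \int_\Sigma R F \leqslant \|R\|_{L^2(\Sigma)} \|F\|_{L^2(\Sigma)}.
\]
Combined with a Poincaré-type inequality of the form
\[
\|F\|_{L^2(\Sigma)} \leqslant c \bigl( \|\nabla F\|_{L^2(\Sigma)} + \|F\|_{L^2(B_p(\epsilon))} \bigr),
\]
valid on complete, finite-volume Riemann surfaces with parabolic ends (equivalent, after passing to the compactification $\overline{\Sigma}$ via conformal invariance of $\|\nabla\cdot\|_{L^2}$, to the spectral gap of $\overline{\Delta}$ above its zero eigenspace), Young's inequality gives $\|F\|_{L^2(\Sigma)} \leqslant c\|R\|_{L^2(\Sigma)}$.

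Finally, I would enlarge $K$ to a compact $K' \subset \Sigma$ with $K \subset \operatorname{int}(K')$ on which $g_\Sigma$ is uniformly bounded, and apply interior Calderón--Zygmund estimates:
\[
\|F\|_{L^2_2(K)} \leqslant c(K, K') \bigl( \|F\|_{L^2(K')} + \|\Delta F\|_{L^2(K')} \bigr) \leqslant c \bigl( \|F\|_{L^2(\Sigma)} + \|R\|_{L^2(\Sigma)} \bigr),
\]
using $\|\Delta F\|_{L^2(K')} \leqslant \|R\|_{L^2(\Sigma)} + C_1 \|F\|_{L^2(\Sigma)}$. Substituting the $L^2$ estimate yields \cref{ineq:Local_L22_Bound}. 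The main obstacle is the global Poincaré-type inequality on $(\Sigma, g_\Sigma)$: one has to rule out $L^2$-mass escaping into the parabolic cusps unseen by $\nabla F$ or by $F|_{B_p(\epsilon)}$. The most natural route is to exploit the two-dimensionality to transfer the Dirichlet term to $(\overline{\Sigma}, g_{\overline{\Sigma}})$ where compactness is available, and to use the continuous extension of $F$ over $\overline{\Sigma}$ (from \Cref{cor:Solution_KW_C0}) together with the maximum principle applied to $\Delta F + WF = R$ to control $F$ near the cusps.
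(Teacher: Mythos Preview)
Your route differs from the paper's. You linearize exactly via the mean-value identity and try to close a global $L^2$ energy estimate through a Poincar\'e inequality on $(\Sigma,g_\Sigma)$. The paper instead works with the one-sided convexity bound $(\Delta + 2h_1 e^{2f_2})F \leqslant |g_1-g_2| + e^{2f_2}|h_1-h_2|$, compares the Green's operator of $\Delta + 2h_1 e^{2f_2}$ with that of the fixed operator $\Delta + c_2\chi_{B_p(\epsilon)}$ via positivity of the kernels, and reads off directly a pointwise upper bound for $F$ on the intermediate compact $K'$; swapping the roles of $f_1,f_2$ gives the matching lower bound. The final $L^2_2(K)$ step is then the same interior elliptic estimate you use.

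The genuine gap in your argument is exactly the one you flag, and your proposed repair does not work. The inequality $\|F\|_{L^2(\Sigma)} \leqslant c\bigl(\|\nabla F\|_{L^2(\Sigma)} + \|F\|_{L^2(B_p(\epsilon))}\bigr)$ amounts to a spectral gap for $\Delta_\Sigma$ at $0$, and this can fail for complete finite-volume surfaces with parabolic ends: an end modelled on $[1,\infty)\times S^1$ with metric $dr^2 + r^{-4}\,d\theta^2$ is conformally a punctured disk, has finite volume, yet the bump functions $\psi(r/n)$ have Rayleigh quotient of order $n^{-2}$. Conformal invariance in dimension $2$ transfers only the Dirichlet energy $\|\nabla F\|_{L^2}$ to the compactification $\overline\Sigma$; the factor $\|F\|_{L^2(\Sigma)}$ on the right-hand side of your energy identity is not controlled by $\|F\|_{L^2(\overline\Sigma)}$ because $\kappa\to 0$ at the punctures, so the Poincar\'e inequality on $\overline\Sigma$ bounds a strictly weaker norm and the estimate does not absorb. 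The maximum-principle alternative you allude to at the end is essentially what the paper does, and once one commits to it the global energy step becomes superfluous.
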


\begin{proof}
Pick and fix a compact set $K^\prime \subset \Sigma$ such that $K \subset U \subset K^\prime$ for some open set $U$.  First we prove that there is a constant $c^\prime = c^\prime (K^\prime, B, p, \epsilon)$, such that the following holds:
\begin{equation}
\| f_1 - f_2 \|_{L^\infty (K^\prime)} \leqslant c ( \|g_1 - g_2 \|_{L^2 (\Sigma)} + \| h_1 - h_2 \|_{L^2 (\Sigma)} ).  \label[ineq]{ineq:localLinftybound}
\end{equation}
Let $F = f_1 - f_2$.  Then, using the convexity of the exponential function, we get
\begin{equation}
\Delta F = (g_1 - g_2) + e^{2 f_2} (h_2 - h_1 e^{2F}) \leqslant |g_1 - g_2| + e^{2 f_2} | h_2 - h_1 | - 2 h_1 e^{2f_2} F.  \label{eq:Feq}
\end{equation}
Rearranging this we have, for some constant $c_1 = c_1 (B)$, that
\begin{equation}
(\Delta + 2 h_1 e^{2f_2}) F \leqslant |g_1 - g_2| + e^{2 f_2} | h_2 - h_1 | \leqslant c_1 (|g_1 - g_2| +| h_2 - h_1 |).
\end{equation}
Let $G_1$ be the Green's operator of the positive definite elliptic operator $\Delta + h_1 e^{2f_2}$.  Standard elliptic theory tells us that $G_1$ is an integral operator with a positive kernel, thus
\begin{equation}
F \leqslant c_1 G_1 ( |g_1 - g_2| + | h_2 - h_1 |).
\end{equation}
By the hypotheses and \Cref{thm:Intro_Main_2}, we get that there is a constant $c_2 = c_2 (\epsilon, p, B)$, such that $2 h_1 e^{2f_2} \geqslant c_2$ on $B_p (\epsilon)$.  Let $G$ now be the Green's operator of the positive definite elliptic operator $\Delta + c_2 \chi_{B_p (\epsilon)}$.  Since $c_2 \chi_{B_p (\epsilon)} \leqslant 2 h_1 e^{2f_2}$ everywhere on $\Sigma$, we get that $G \geqslant G_1$, thus
\begin{equation}
F \leqslant c_1 G ( |g_1 - g_2| + | h_2 - h_1 |).  \label{eq:Fbound}
\end{equation}
Now $G$ (composed with the restriction to $K$) is a continuous linear operator from $L^2 (\Sigma)$ to $L_2^2 (K^\prime)$.  The continuous embedding $L_2^2 (K^\prime) \hookrightarrow L^\infty (K^\prime)$, gives us a constant $c_3 = c_3 (\epsilon, p, B, K)$ such that
\begin{equation}
F|_K \leqslant c_3 ( \| g_1 - g_2 \|_{L^2 (\Sigma)} + \| h_2 - h_1 \|_{L^2 (\Sigma)})).
\end{equation}
Reversing the roles of $f_1$ and $f_2$ we get a similar lower bound, and the existence of $c^\prime$.

Using \cref{ineq:localLinftybound}, we now prove \cref{ineq:Local_L22_Bound}.  By the hypotheses and \cref{eq:Feq}, we have that there is a constant $c_4 = c_4 (\epsilon, p, B)$, such that
\begin{equation}
|\Delta F| \leqslant c_4 (|g_1 - g_2| + e^{2 f_2} | h_2 - h_1 | + |F|).  \label[ineq]{ineq:deltaFbound}
\end{equation}
Thus, by elliptic regularity, we get that there is a constant $c_5 = c_5 (\epsilon, p, B, K, K^\prime)$, such that
\begin{equation}
\| F \|_{L_2^2 (K)} \leqslant c_5 (\| \Delta F \|_{L^2 (K^\prime)} + \| F \|_{L^2 (K^\prime)}).  \label[ineq]{ineq:ellipticregularity}
\end{equation}
Combining \cref{ineq:ellipticregularity,ineq:deltaFbound,ineq:localLinftybound}, we get \cref{ineq:Local_L22_Bound}.
\end{proof}

\bigskip

\section{Vortices on complete, non-compact, finite volume Riemann surfaces}  \label{sec:vortexpart2}

Before proving the main results, we give a short detour to recall some facts about Abelian vortices on oriented, finite volume, complete, but non-compact Riemannian surfaces.  We emphasize here that we do not require (lower) bounds on injectivity radius.  This setup when we use the third bullet of \Cref{thm:vor} to construct instantons on the ES manifold.

\smallskip

The first result is an application of \Cref{lem:Integral_Laplacian_Vanishes} and is a simple extension of the results in \cite{B90}, for complete finite volume Riemannian surfaces. The only complication, comparing to the result of that reference, comes from the boundary terms in the integration by parts, which can be dealt using \Cref{lem:Integral_Laplacian_Vanishes}.

\begin{prop}\label{prop:Energy}
Let $(\Sigma, g_\Sigma)$ be a complete, finite volume Riemannian surface, with a Hermitian line bundle $(\cN, H)$.  Let $(\nabla, \Phi)$ be a pair of a Hermitian connection on $\cN$ and a smooth section of $\cN$, with $E_{\YMH}(\nabla, \Phi) < \infty$. Then
\begin{equation}
2 \pi E_{\YMH}(\nabla,\Phi) = \Vert i \Lambda F_\nabla - \frac{1}{2} (\tau - \vert \Phi \vert^2)  \Vert^2_{L^2} + 2 \Vert \overline{\partial_\nabla} \Phi \Vert^2_{L^2} + \tau \int\limits_\Sigma i \Lambda F_\nabla \dvol_\Sigma.
\end{equation}
In particular, if $(\nabla ,\Phi)$ is a finite energy $\tau$-vortex we have
\begin{equation}\label{eq:Energy_Bound}
E_{\YMH}(\nabla ,\Phi) = \tau \deg(\nabla) \leqslant \frac{\tau^2}{4 \pi} \Vol(\Sigma, g_\Sigma) ,
\end{equation}
with equality if and only if $ \Phi = 0$.
\end{prop}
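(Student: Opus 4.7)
The identity is the standard Bogomolny--Bradlow decomposition, for which the compact-case argument of \cite{B90} reduces to pointwise algebra plus an integration by parts; the only thing to add here is a justification of that integration by parts in the complete, finite-volume setting. First, since $\dim_{\rl}\Sigma = 2$, two-forms are pointwise one-dimensional, so $|F_\nabla|^2 = (i\Lambda F_\nabla)^2$, and completing the square yields
$$
|F_\nabla|^2 + \tfrac{1}{4}(\tau - |\Phi|^2)^2 = \bigl|i\Lambda F_\nabla - \tfrac{1}{2}(\tau - |\Phi|^2)\bigr|^2 + \tau\,i\Lambda F_\nabla - (i\Lambda F_\nabla)|\Phi|^2.
$$
The cross term is exactly absorbed by the Kähler/Nakano identity for a Hermitian line bundle on a Kähler surface, $\nabla^*\nabla = 2\,\overline{\partial}_\nabla^*\,\overline{\partial}_\nabla + i\Lambda F_\nabla$, which, after pairing with $\Phi$ and integrating by parts on a compact exhaustion $\{B_{R_n}(p)\}$ of $\Sigma$, produces
$$
\int_\Sigma |\nabla \Phi|^2 \dvol_\Sigma = 2\int_\Sigma |\overline{\partial}_\nabla\Phi|^2 \dvol_\Sigma + \int_\Sigma (i\Lambda F_\nabla)|\Phi|^2 \dvol_\Sigma,
$$
provided all boundary contributions vanish in the limit.

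The main technical point is thus to show that these boundary terms do vanish, and two related approaches are available. The first is to invoke \Cref{lem:Integral_Laplacian_Vanishes}: the discrepancy in the identity above on a compact subset is, up to elementary rearrangement, proportional to $\int \Delta|\Phi|^2$, and the finite-energy hypothesis forces $|\Phi|^2 \in L^1(\Sigma)$ (indeed, $(\tau - |\Phi|^2) \in L^2$ combined with $\Vol(\Sigma, g_\Sigma) < \infty$ gives $|\Phi|^2 \in L^2 \subset L^1$), so \Cref{lem:Integral_Laplacian_Vanishes} kills the offending term. The second, dual approach uses the coarea formula: since
$$
\int_0^\infty \biggl(\oint_{\partial B_r(p)} |\overline{\partial}_\nabla\Phi|\,|\Phi|\,\mathrm{d}s\biggr) \mathrm{d}r = \int_\Sigma |\overline{\partial}_\nabla\Phi|\,|\Phi|\,\dvol_\Sigma \leqslant \|\overline{\partial}_\nabla\Phi\|_{L^2}\|\Phi\|_{L^2} < \infty,
$$
a subsequence $R_n \to \infty$ can be chosen with the boundary integrals $\oint_{\partial B_{R_n}(p)} |\overline{\partial}_\nabla\Phi|\,|\Phi|\,\mathrm{d}s \to 0$ directly. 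Either way, substituting the Kähler identity into the completed-square expression and canceling the two occurrences of $(i\Lambda F_\nabla)|\Phi|^2$ gives the claimed formula. This extension of the integration by parts to the non-compact setting is the only genuine departure from Bradlow's argument.

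For the vortex conclusions: a $\tau$-vortex satisfies $i\Lambda F_\nabla = \tfrac{1}{2}(\tau - |\Phi|^2)$ and $\overline{\partial}_\nabla\Phi = 0$, so the two nonnegative terms in the identity vanish, leaving $2\pi E_{\YMH} = \tau\int_\Sigma i\Lambda F_\nabla \,\dvol_\Sigma$. Since $\dim_{\rl}\Sigma = 2$ gives $F_\nabla = -i(i\Lambda F_\nabla)\dvol_\Sigma$, one reads $\deg(\nabla) = \tfrac{1}{2\pi}\int_\Sigma i\Lambda F_\nabla \,\dvol_\Sigma$, hence $E_{\YMH} = \tau \deg(\nabla)$. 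The upper bound follows from the pointwise inequality $i\Lambda F_\nabla = \tfrac{1}{2}(\tau - |\Phi|^2) \leqslant \tfrac{\tau}{2}$, integrated over the finite-volume surface, yielding $\deg(\nabla) \leqslant \tfrac{\tau}{4\pi}\Vol(\Sigma, g_\Sigma)$; equality forces $|\Phi|^2 \equiv 0$, i.e.\ $\Phi \equiv 0$.
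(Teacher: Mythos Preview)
Your argument is correct and follows essentially the same route as the paper: the Bogomolny/Bradlow completion of the square together with the K\"ahler identity, reducing the non-compact issue to the vanishing of $\int_\Sigma \Delta|\Phi|^2$. The only difference is in how that boundary term is disposed of: the paper invokes Gaffney's lemma \cite{G91}*{Lemma~1.1.A}, whereas you use \Cref{lem:Integral_Laplacian_Vanishes} (whose proof indeed only needs $|\Phi|^2\in L^1$, though the stated hypothesis is $L^{2,1}$) or, more cleanly, the coarea argument---both valid alternatives.
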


\begin{proof}
The first step is to use the K\"ahler identities to compute
\begin{align}
\int\limits_\Sigma i \Lambda F_\nabla \vert \Phi \vert^2 \dvol_\Sigma &= \int\limits_\Sigma H (\Phi, i \Lambda (\delbar_\nabla \del_\nabla \Phi + \del_\nabla \delbar_\nabla \Phi) \dvol_\Sigma  \\
&= \int\limits_\Sigma H (\Phi, \del_\nabla^* \del_\nabla \Phi - \delbar_\nabla^* \delbar_\nabla \Phi) \dvol_\Sigma  \\ 
&= \Vert \del_\nabla \Phi \Vert^2_{L^2} - \Vert \delbar_\nabla \Phi \Vert^2_{L^2},  \label{eq:Intermediate_Step_Energy_Computation}
\end{align} 
with the last step conditional on the boundary terms in the integration vanishing. This boundary term is of the form
\begin{equation}
\lim\limits_{r \rightarrow \infty } \int\limits_{\del \overline{B_r(p)}} H (\Phi, \ast \nabla \Phi) = \frac{1}{2} \int\limits_\Sigma \Delta \vert \Phi \vert^2 \dvol_\Sigma,
\end{equation}
which vanishes by combining the finite energy assumption and \cite{G91}*{Lemma~1.1.A.}. Now notice that $\vert F_\nabla \vert = \vert i \Lambda F_\nabla \vert$ and so
\begin{align}
\Vert i \Lambda F_\nabla - \tfrac{1}{2} (\tau - \vert \Phi \vert^2)  \Vert^2_{L^2} &= \Vert F_\nabla \Vert^2_{L^2} + \frac{1}{4} \Vert\tau - \vert \Phi \vert^2 \Vert^2_{L^2} - \langle i \Lambda F_\nabla , (\tau - \vert \Phi \vert^2) \rangle_{L^2}  \\
&= \Vert F_\nabla \Vert^2_{L^2} + \frac{1}{4} \Vert\tau - \vert \Phi \vert^2 \Vert^2_{L^2} - \int\limits_\Sigma i \Lambda F_\nabla (\tau -  \vert \Phi \vert^2) \dvol_\Sigma  \\
&= \Vert F_\nabla \Vert^2_{L^2} + \frac{1}{4} \Vert\tau - \vert \Phi \vert^2 \Vert^2_{L^2} + \Vert \nabla \Phi \Vert^2_{L^2} - 2 \pi \tau \deg (\nabla) - 2 \Vert \delbar_\nabla \Phi \Vert^2_{L^2},
\end{align}
where in the last equality we have used \cref{eq:Intermediate_Step_Energy_Computation}. Rearranging gives the formula in the statement.

In the case when $(\nabla, \Phi)$ is a $\tau$-vortex of finite energy, it is proven in \cite{T80}*{Lemma~3.1} that $\vert \Phi \vert^2 \leqslant \tau$ and so the assumption that $\vert \Phi \vert$ is bounded holds immediately and the first part of the result applies. In that case we further have that the two first terms in the equation for the energy vanish and we are left with the term involving the degree.

The fact that the energy is uniformly bounded from above by $\frac{\tau^2}{2} \Vol(\Sigma, g_\Sigma)$ is a consequence of the $\tau$-vortex equation as follows
\begin{equation}
E_{\YMH} (D) = \tau \deg (\nabla) = \tau \frac{i}{2 \pi} \int\limits_\Sigma F_\nabla = \tau \frac{i}{2 \pi} \int\limits_\Sigma \frac{1}{2i} (\tau - \vert \Phi \vert^2) \dvol_\Sigma \leqslant \frac{\tau^2}{4 \pi} \Vol(\Sigma, g_\Sigma).
\end{equation}
\end{proof}

\smallskip

We give now the main consequence of \Cref{thm:Intro_Main_2}.

\begin{cor}\label{cor:KW_Main}
Let $(\Sigma, g_\Sigma)$ be a complete, finite volume, Riemannian surface satisfying the assumptions of \Cref{thm:Intro_Main_2} and $\cN$ complex line bundle over $\Sigma$. Fix an Hermitian structure $H$ on $\cN$ and let $\nabla^H$ be an $H$-Hermitian connection and $\Phi_0$ be a $\overline{\partial}_{\nabla^H}$-holomorphic section of $\cN$.

If $i \Lambda F_{\nabla^H}$ and $\vert \Phi_0 \vert_H^2$ are bounded, then there is a unique, smooth, and bounded $f$ so that
\begin{equation}
(\nabla, \Phi) = (\nabla^H + \del f - \delbar f, e^f \Phi_0)  \label{eq:complexgaugetr}
\end{equation}
is a $\tau$-vortex if and only if Bradlow's condition holds (cf. \cite{B90}*{Equation~4.10}):
\begin{equation}\label{eq:tau_Lower_Bound}
\tau > \frac{1}{\Vol(\Sigma, g_\Sigma)} \int\limits_\Sigma 2 i \Lambda F_{\nabla^H} \dvol_\Sigma = 4 \pi \frac{\deg(\nabla^H)}{\Vol(\Sigma, g_\Sigma)}.
\end{equation}
Moreover, the degree of the resulting vortex is the same as the degree of $\nabla^H$, that is
\begin{equation}
\deg(\nabla) = \deg(\nabla^H).  \label{eq:degreeinv}
\end{equation}
\end{cor}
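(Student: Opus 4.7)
The plan is to reduce both vortex equations to a single scalar Kazdan--Warner equation for $f$ and then invoke \Cref{thm:Intro_Main_2}. Because complex gauge transformations preserve the holomorphic structure and $\Phi_0$ is $\delbar_{\nabla^H}$-holomorphic, the section $\Phi = e^f \Phi_0$ is automatically $\delbar_\nabla$-holomorphic; hence the only remaining condition to impose on $f$ is the moment-map equation $2 i \Lambda F_\nabla = \tau - |\Phi|^2$. The K\"ahler identities on the Riemann surface give
\begin{equation}
F_\nabla - F_{\nabla^H} \;=\; -2\del\delbar f \;=\; -i (\Delta f)\, \dvol_\Sigma,
\end{equation}
where $\Delta$ is the non-negative Hodge Laplacian, and consequently $i \Lambda F_\nabla = i \Lambda F_{\nabla^H} + \Delta f$. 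Substituting rearranges the moment-map equation into the Kazdan--Warner form
\begin{equation}
\Delta f = g - h\, e^{2f}, \qquad g := \tfrac{\tau}{2} - i \Lambda F_{\nabla^H}, \qquad h := \tfrac{1}{2} |\Phi_0|^2_H.
\end{equation}

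Next I would verify the hypotheses of \Cref{thm:Intro_Main_2}. Boundedness of $g$ and $h$ follows immediately from the boundedness of $i\Lambda F_{\nabla^H}$ and $|\Phi_0|^2_H$, and a one-line rewriting identifies Bradlow's condition \cref{eq:tau_Lower_Bound} with the strict positivity of $\int_\Sigma g\, \dvol_\Sigma$, which on a finite-volume surface with $g$ bounded is equivalent to $g$ having positive average outside a compact set. Granted the remaining hypothesis---a positive lower bound for $g/h$ at the ends of $\Sigma$---that theorem produces a unique smooth bounded $f$ solving the Kazdan--Warner equation with $\int_\Sigma \Delta f\, \dvol_\Sigma = 0$, and unwinding \cref{eq:complexgaugetr} yields the claimed $\tau$-vortex. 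For the converse direction, if such a vortex exists then integrating the moment-map equation over $\Sigma$, combined with the degree-invariance proved below and the fact that $\Phi = e^f \Phi_0$ is not identically zero, forces strict inequality in Bradlow's condition.

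Finally, the degree invariance \cref{eq:degreeinv} falls out of the same curvature identity: from $F_\nabla - F_{\nabla^H} = -i (\Delta f)\,\dvol_\Sigma$ and $\int_\Sigma \Delta f\, \dvol_\Sigma = 0$---read off directly from \Cref{thm:Intro_Main_2}, or alternatively from \Cref{lem:Integral_Laplacian_Vanishes} applied to the bounded solution $f$ (whose regularity is granted by elliptic bootstrapping off the Kazdan--Warner equation)---one computes
\begin{equation}
\deg(\nabla) - \deg(\nabla^H) \;=\; \frac{i}{2\pi} \int\limits_\Sigma (F_\nabla - F_{\nabla^H}) \;=\; \frac{1}{2\pi} \int\limits_\Sigma \Delta f\, \dvol_\Sigma \;=\; 0.
\end{equation}
The principal technical subtlety I anticipate is not the reduction itself, nor the integration identity, but rather the verification of the end-behaviour hypothesis $g/h \geqslant a > 0$ of \Cref{thm:Intro_Main_2} from the global boundedness assumptions of the corollary; this looks like it will require additional information on the asymptotics of $\Phi_0$ and $F_{\nabla^H}$ near the ends of $\Sigma$, presumably to be supplied by the specific geometric setting in which the corollary gets applied.
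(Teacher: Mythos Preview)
Your proposal is correct and follows essentially the same route as the paper: reduce the vortex equations to the Kazdan--Warner equation with $g = \tfrac{\tau}{2} - i\Lambda F_{\nabla^H}$ and $h = \tfrac{1}{2}|\Phi_0|^2_H$, invoke \Cref{thm:Intro_Main_2}, and obtain degree invariance from \Cref{lem:Integral_Laplacian_Vanishes}. You are in fact more careful than the paper's own proof in flagging that the end-hypothesis $g/h \geqslant a > 0$ of \Cref{thm:Intro_Main_2} is not a consequence of the stated global boundedness assumptions---the paper glosses over this point, and indeed it is only verified later in the concrete application to the ES manifold.
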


\begin{proof}
We start by recalling the connection between vortices and the solutions of the Kazdan--Warner equation (cf. \cites{B90,WZ08}). There is a vortex field configuration $(\nabla, \Phi)$, in the complex gauge orbit of $(\delbar_{\nabla^H}, \Phi_0)$, if and only if $(\nabla, \Phi)$ has the form \eqref{eq:complexgaugetr} with an $f$ that satisfies
\begin{equation}
2 i \Lambda F_\nabla = 2 i \Lambda F_{\nabla^H} + 2 \Delta f = \tau - |\Phi_0|^2_H e^{2f}.  \label{eq:KW2}
\end{equation}
Setting $g = \tfrac{\tau}{2} - i \Lambda F_{\nabla^H}$ and $h = \tfrac{1}{2} |\Phi_0|^2_H$, \cref{eq:KW2} is equivalent to the Kazdan--Warner \cref{eq:KW_3}. According to \Cref{thm:Intro_Main_2}, one can uniquely solve \cref{eq:KW2} if $i \Lambda F_{\nabla^H}$ and $|\Phi_0|^2_H$ are bounded, and $\int_\Sigma g > 0$, which is equivalent to \cref{eq:tau_Lower_Bound}. The resulting solution $f$ is smooth and uniformly bounded, hence using the K\"ahler identities and \Cref{lem:Integral_Laplacian_Vanishes} proves \cref{eq:degreeinv}.
\end{proof}

\begin{rem}\label{rem:Up_To_Constant}
Notice that $(\nabla^H , \widehat{\Phi}_0=k\Phi_0)$, for some constant $k$, solves the another Kazdan--Warner equation
\begin{equation}
2 i \Lambda F_{\nabla^H} + 2 \Delta \widehat{f} = \tau - k^2|\Phi_0|^2_H e^{2\widehat{f}},
\end{equation}
for $\widehat{f}$. The resulting (unique bounded) solution is given by $\widehat{f}=f-\log(k^2)$, where $f$ denotes the solution of the original Kazdan--Warner equation. Therefore, the resulting vortices obtained from applying the previous corollary to $(\nabla^H , \widehat{\Phi}_0)$ and $(\nabla^H , \Phi_0)$ are the same.
\end{rem}

\bigskip

\section{Application to the Euclidean Schwarzschild manifold}\label{sec:ES}

We start this section by introducing, without proofs, the ES manifold.  The base manifold is $X = \rl^2 \times S^2$.  Let $(t, \rho) \in (0, 2 \pi) \times (0, \infty)$ be the usual polar coordinates on $\rl^2$, and $g_{S^2}$ the round metric of volume $2 \pi$ on $S^2$, as in \Cref{sec:vorinstdual}. Then, for each {\em mass} $m \in \rl_+$, the Riemannian metric on $X$ on the dense chart on which $t$ and $\rho$ are defined is
\begin{equation}
g_X = 8 m^2 \left( \frac{2 \rho}{\rho + 1} dt^2 + \frac{\rho + 1}{2 \rho} d\rho^2 + (\rho + 1)^2 g_{S^2} \right).  \label{eq:schmetric}
\end{equation}
While the metric \eqref{eq:schmetric} seems singular on the {\em bolt} at $\rho = 0$ (the {\em ``Euclidean event horizon''}), the change of coordinate $R = \tfrac{\sqrt{\rho}}{4m}$ results in an expression which, as $R \approx 0^+$, looks like
\begin{equation}
g_X \approx R^2 dt^2 + dR^2 + g_{S^2} = g_{\rl^2 \times S^2},
\end{equation}
and thus extends smoothly over the whole manifold as a complete and Ricci-flat Riemannian metric for all $m > 0$.  Traditionally --- especially in theoretical physics --- $g_X$ is given in the coordinates $\tau = 4 m t$ and $r = 2 m (\rho + 1)$.  On this chart we have
\begin{equation}
g_X = \left( 1 - \tfrac{2 m}{r} \right) d\tau^2 + \frac{1}{1 - \tfrac{2 m}{r}} dr^2 + 2r^2 g_{S^2}.
\end{equation}

The full isometry group of $(X, g_X)$ is $\O (2) \times \O (3)$, where $\O (2)$ acts on $\rl^2$ and $\O (3)$ acts on $S^2$, both the usual ways.  The (sub)group of orientation preserving isometries (which preserve the ASD equation) is $\Isom_+ (X, g_X) \cong \mathrm{S} (\O (2) \times \O (3))$, and the identity component is $\Isom_0 (X, g_X) \cong \SO (2) \times \SO (3)$.

Hence $\SO (3)$ has a canonical action on the ES manifold that is orientation preserving and isometric, but since the metric is a wrapped product, the results of \Cref{sec:vortexpart2} cannot (directly) be used for $g_X$.  However, using the smooth, spherically symmetric conformal factor $\tfrac{1}{8 m^2 (\rho + 1)^2}$ we get a non-wrapped metric $\widetilde{g}_X$ to which \Cref{thm:vor} applies.  Since the ASD equation is a conformally invariant, instantons for $g_X$ and $\widetilde{g}_X$ are the same, and the Yang--Mills energy is invariant.  Also, it is noteworthy that $\widetilde{g}_X$ is now independent of $m$, thus (pure, classical) Yang-Mills theory is equivalent for each choice of the mass on the ES manifold.

\smallskip

Let $\Sigma = \rl^2$ and $g_\Sigma$ be
\begin{equation}
g_\Sigma = \frac{2 \rho}{(\rho + 1)^3} dt^2 + \frac{1}{2 \rho (\rho + 1)} d\rho^2.  \label{eq:gsigma}
\end{equation}
Now $\widetilde{g}_X = g_\Sigma + g_{S^2}$ (where we omitted the obvious pullbacks).  Straightforward computations show that $g_\Sigma$ is complete, has volume $2 \pi$.  Its Gau{\ss} curvature is $- 4 + \tfrac{12}{\rho + 1}$, thus $\Sigma$ parabolic end (cf. \Cref{sec:Kazdan--Warner}.  There is technical issue with $\Sigma$: its injectivity radius is not uniformly bounded below, thus one cannot use global Sobolev embeddings.  The scalar Laplacian of $\Sigma$ takes the form of
\begin{equation}
\Delta_\Sigma = - \frac{(\rho + 1)^3}{2 \rho} \partial_\chi^2 - 2 \rho (\rho + 1) \partial_\rho^2 - 2 \partial_\rho.  \label{eq:laplace}
\end{equation}
A dense chart of $\Sigma$ is given by the coordinate $z$ such that
\begin{equation}
z = \sqrt{\rho} \exp (\tfrac{\rho}{2} - i t) \in \cx,  \label{eq:holomap}
\end{equation}
which extends to a global holomorphic chart $\Sigma \rightarrow \cx$.

\smallskip

Given its relevance for the proof of our main results we shall separately state the following observation.

\begin{lem}
The Riemannian surface $(\Sigma, g_\Sigma)$ is complete, of finite volume and satisfies the hypothesis of \Cref{thm:Intro_Main_2}. In particular, \Cref{cor:KW_Main} applies to $(\Sigma,g_\Sigma)$.
\end{lem}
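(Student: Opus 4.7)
The plan is to check, in sequence, that $(\Sigma, g_\Sigma)$ is complete, has finite volume, has finite topological type, and that its unique end is parabolic; these are the properties of the underlying surface needed for \Cref{thm:Intro_Main_2} (and hence \Cref{cor:KW_Main}) to apply.

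First I would verify finite volume and completeness. The volume follows from reading off $\dvol_\Sigma = (\rho+1)^{-2}\, dt \wedge d\rho$ from \cref{eq:gsigma} and integrating: $\Vol(\Sigma,g_\Sigma) = 2\pi \int_0^\infty (\rho+1)^{-2}\, d\rho = 2\pi$. Completeness reduces to two observations: the apparent boundary at $\rho = 0$ is in fact a smooth interior point (a substitution such as $R = \sqrt{\rho}$ shows that the metric is asymptotic to $2(dR^2 + R^2\, dt^2)$ there, so $\Sigma \cong \rl^2$ smoothly), and the radial arclength $\int^\rho (2\sigma(\sigma+1))^{-1/2}\, d\sigma$ diverges logarithmically as $\rho \to \infty$, so the end lies at infinite distance. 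In particular $\Sigma$ has exactly one end.

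The main step is to show that this end is parabolic in the sense of \Cref{sec:Kazdan--Warner}. The holomorphic coordinate $z = \sqrt{\rho}\exp(\rho/2 - it)$ from \cref{eq:holomap} identifies $\Sigma$ biholomorphically with $\cx$, so $\overline{\Sigma} \cong \cx \cup \{\infty\} \cong S^2$ with the unique end corresponding to $z = \infty$. I would then rewrite $g_\Sigma$ in conformal form: a direct chain-rule calculation gives
\begin{equation}
g_\Sigma = \tfrac{2}{(\rho+1)^3 e^\rho}\, |dz|^2,
\end{equation}
which, in the coordinate $w = 1/z$ around infinity and using $|z|^2 = \rho e^\rho$, becomes $g_\Sigma = \tfrac{2\rho^2 e^\rho}{(\rho+1)^3}\, |dw|^2$. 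Fixing any smooth reference metric $g_{\overline\Sigma} = A(w,\bar w)\, |dw|^2$ near $w = 0$ with $A$ smooth and positive, the conformal factor $\kappa$ in $g_{\overline\Sigma} = \kappa\, g_\Sigma$ satisfies $\kappa \sim (\rho+1)^3 / (\rho^2 e^\rho) \to 0$ as $\rho \to \infty$, so it extends continuously by zero across the point at infinity. This is exactly the parabolicity criterion laid out at the opening of \Cref{sec:Kazdan--Warner}.

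The only mildly subtle point is the coordinate bookkeeping for $\kappa$; every other check is a direct substitution into \cref{eq:gsigma}. With all four properties verified, $(\Sigma, g_\Sigma)$ fits the setting of \Cref{thm:Intro_Main_2}, and since the unique end is parabolic and the volume is finite the uniqueness and $L^\infty$-bound portions of that theorem (and hence all of \Cref{cor:KW_Main}) are immediately available.
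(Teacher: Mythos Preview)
Your proposal is correct and follows essentially the same route as the paper: verify completeness and finite volume directly from \cref{eq:gsigma}, identify $\Sigma \cong \cx$ via the coordinate \cref{eq:holomap}, write the metric in the isothermal form $g_\Sigma = \tfrac{2}{(\rho+1)^3 e^\rho}\,|dz|^2$, and read off that the single end is parabolic. The only difference is presentational: the paper defers the first two checks to the discussion preceding the lemma and, for parabolicity, invokes \cite{S88}*{Proposition~2.4} rather than computing $\kappa$ near $w=0$ by hand as you do; your explicit computation is correct and makes the argument self-contained.
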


\begin{proof}
The fact that $g_\Sigma$ is complete of finite volume is proven in the discussion in the beginning of \Cref{sec:ES}. We now address the question of whether $g_\Sigma$ satisfies the hypothesis of \Cref{thm:Intro_Main_2}. First recall that $\Sigma \cong \cx$ is a biholomorphism. Moreover, in terms of the function $z$ in \cref{eq:holomap}, the metric can be written as
\begin{equation}
g_\Sigma= \frac{2}{(1+\rho)^3 e^\rho} \vert dz \vert^2,  \label{eq:isothermal}
\end{equation}
we are in the conditions of \cite{S88}*{Proposition~2.4}, which proves the assumptions of \Cref{thm:Intro_Main_2} hold.  For future reference we also note that the function $\rho$ satisfies $\Delta_\Sigma \rho = - 2.$
\end{proof}

\smallskip

Before beginning the proof of our main result, we note that, using \cref{eq:holomap,eq:isothermal}, $(\Sigma, g_\Sigma)$ can be conformally compactified to the round sphere $(S^2 = \Sigma \cup \{ \infty \}, g_{S^2})$, where $g_{S^2}|_\Sigma = \kappa g_\Sigma$ with
\begin{equation}
\kappa = \frac{(\rho + 1)^3 e^\rho}{(1 + |z|^2)^2} = \frac{(\rho + 1)^3 e^\rho}{(1 + \rho e^\rho)^2}.  \label{eq:conformalfactor}
\end{equation}

\medskip

\subsection{Proof of the main results}

Since $H^4 (X; \Z)$ is trivial, there is a unique $\SU (2)$ principal bundle on $X$, and since $\pi_1 (X)$ is trivial, there is a unique flat $\SU (2)$ connection on that bundle.  Etesi and Hausel proved in \cite{EH01} that for each positive integer $n$, there is a unique reducible spherically symmetric $\SU (2)$ instanton, with holonomy equal to $\mathrm{S} (\U (1) \oplus \U (1)) \cong \U (1)$ and energy equal to $2 n^2$ on $X$.

\smallskip

Using \Cref{thm:vor,cor:KW_Main} we get a classification of the irreducible, spherically symmetric $\SU (2)$ instantons on the ES manifold.

\begin{cor}[Energy restriction]
\label{cor:Energy_Bound_Schwarzschild}
Any finite energy, irreducible, spherically symmetric $\SU (2)$ instanton $D$, on the ES manifold must satisfy $E_{\YM}(D) \in (0, 2)$.
\end{cor}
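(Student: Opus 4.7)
The plan is to combine the instanton--vortex correspondence of \Cref{thm:vor} with the universal energy bound for $\tau$-vortices on complete finite-volume surfaces established in \Cref{prop:Energy}, and then substitute the specific geometric data of the ES manifold.

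First I would invoke \Cref{thm:vor}: irreducibility plus spherical symmetry of $D$ rules out the first two bullets, forcing $n = 1$ and producing an associated $\tau = 4$ vortex pair $(\nabla, \Phi)$ on $(\Sigma, g_\Sigma)$. The Higgs field $\Phi$ must moreover be not identically zero, since otherwise the second bullet of \Cref{thm:vor} would make $D$ reducible to $\mathrm{S}(\U(1) \oplus \U(1))$, contradicting the hypothesis. The identity \eqref{eq:Energies_Equal} then converts finite Yang--Mills energy of $D$ into finite Yang--Mills--Higgs energy of $(\nabla, \Phi)$, together with
\begin{equation}
E_{\YM}(D) = \deg(\nabla).
\end{equation}

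Next, I would apply \eqref{eq:Energy_Bound} of \Cref{prop:Energy}. The opening discussion of \Cref{sec:ES} records that $\Vol(\Sigma, g_\Sigma) = 2\pi$, so substituting $\tau = 4$ yields
\begin{equation}
4 \deg(\nabla) = E_{\YMH}(\nabla, \Phi) \leqslant \frac{\tau^2}{4\pi} \Vol(\Sigma, g_\Sigma) = 8,
\end{equation}
with equality if and only if $\Phi \equiv 0$. Since $\Phi \not\equiv 0$, this inequality is strict: $\deg(\nabla) < 2$, hence $E_{\YM}(D) < 2$. For the lower bound, $E_{\YM}(D) = 0$ would force $\cF_D \equiv 0$ and thus $D$ flat; but the first bullet of \Cref{thm:vor} identifies flatness with $n = 0$, contradicting $n = 1$. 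Therefore $E_{\YM}(D) > 0$.

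No substantive obstacle is anticipated: the heavy lifting has already been carried out in assembling \Cref{thm:vor} and \Cref{prop:Energy}, the latter relying on the delicate boundary-term analysis at the ends of $\Sigma$ via \Cref{lem:Integral_Laplacian_Vanishes}. The corollary then reduces to the arithmetic observation that the specific numerics $\tau = 4$ and $\Vol(\Sigma, g_\Sigma) = 2\pi$ dictated by the conformally rescaled ES geometry give a maximal possible vortex degree of precisely $\tau \Vol(\Sigma, g_\Sigma)/(4\pi) = 2$, saturated only in the reducible case and hence excluded under the hypotheses of the corollary.
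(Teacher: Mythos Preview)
Your proposal is correct and follows essentially the same route as the paper: reduce to the $\tau=4$ vortex on $\Sigma$ via \Cref{thm:vor}, then use the bound $\deg(\nabla) < \tfrac{\tau}{4\pi}\Vol(\Sigma) = 2$ (strict because $\Phi \not\equiv 0$), together with positivity from irreducibility. The only cosmetic difference is that you cite \Cref{prop:Energy} for the upper bound, whereas the paper inlines the one-line integration of the vortex equation directly.
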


\begin{proof}
By \Cref{lem:invconnection}, any spherically symmetric $\SU (2)$ connection, $D$, of the form \eqref{eq:invconnection} is irreducible if and only if $n = 1$ and $\Phi$ does not vanish identically. In this case, $\Phi$ is also non-zero, and by \cref{thm:vor}, $D$ is an instanton if and only if the \cref{eq:vor1B,eq:vor2B}, hold on $\rl^2$, with the metric given by \cref{eq:gsigma}. Then $(\nabla, \Phi)$ is a 4-vortex on $\Sigma$ with $\Phi \neq 0$, and it follows from \cref{eq:Energies_Equal} that the (necessarily positive) energy of $D$ satisfies
\begin{equation}\label{eq:degbound}
0 < E_{\YM} (D) = \deg (\nabla) = \frac{1}{2 \pi} \int\limits_\Sigma i \Lambda F_\nabla \dvol_\Sigma < \frac{1}{2 \pi} \frac{4}{2} \Vol (\Sigma) = 2.
\end{equation}
\end{proof}

\smallskip

It will be useful to have the following lemma stated separately.

\begin{lem}\label{lem:Energy_Density_Instanton_In_Terms_Of_Vortices}
Let $D$ be an spherically symmetric $\SU (2)$ connection on the ES manifold, and $(\nabla, \Phi)$ be the corresponding field configuration on $\Sigma$ as in \Cref{thm:vor}.  Then $\vert \cF_D \vert^2_{g_{X}}$ descends to $\Sigma$ as a smooth function, and moreover the following equation holds
\begin{equation}
\vert \cF_D \vert^2_{g_{X}} = \frac{| F_\nabla |^2_{g_\Sigma}  + | \nabla \Phi |^2_{g_\Sigma} + \frac{1}{4} (4 - | \Phi |^2_{H})^2}{16 m^4 (\rho+1)^4}   \label{eq:Energy_Density_Instanton_In_Terms_Of_Vortices}
\end{equation}
\end{lem}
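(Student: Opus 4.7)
The proof has two natural stages: a conformal reduction to the product metric $\widetilde g_X = g_\Sigma \oplus g_{S^2}$, and an explicit pointwise calculation of the curvature on that product. Since $g_X = 8m^2(\rho+1)^2\,\widetilde g_X$ and squared norms of $2$-forms on a $4$-manifold scale as $|\omega|^2_{\lambda^2 g} = \lambda^{-4}|\omega|^2_g$, one has
\[
|\cF_D|^2_{g_X} \;=\; \frac{1}{64\,m^4(\rho+1)^4}\,|\cF_D|^2_{\widetilde g_X}.
\]
It thus suffices to compute $|\cF_D|^2_{\widetilde g_X}$ pointwise and to verify that it equals $4\big(|F_\nabla|^2_{g_\Sigma} + |\nabla\Phi|^2_{g_\Sigma} + \tfrac14(4-|\Phi|^2)^2\big)$, since the factor of $4$ cancels against the $64$ to produce the denominator $16\,m^4(\rho+1)^4$ of \eqref{eq:Energy_Density_Instanton_In_Terms_Of_Vortices}.

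For the pointwise calculation, I put $D$ in the normal form \eqref{eq:invconnection} with $n=1$ supplied by \Cref{lem:invconnection}, and expand $\cF_D = dA + A\wedge A$ where $A$ is the corresponding $\mathfrak{su}(2)$-valued $1$-form. Under the tensor decomposition $\Lambda^2_X = \Lambda^2_\Sigma \oplus (\Lambda^1_\Sigma \otimes \Lambda^1_{S^2}) \oplus \Lambda^2_{S^2}$, the curvature splits into three pointwise-orthogonal pieces. The purely $\Sigma$-piece is diagonal and equal to $\diag(F_{\nabla^\cL},-F_{\nabla^\cL})$. The purely $S^2$-piece combines $F_{\nabla^{\cO(\mp 1)}} = \mp i\,\dvol_{S^2}$ with the quadratic contribution $\pm\tfrac14 (\Phi\otimes\alpha_1)\wedge(\Phi\otimes\alpha_1)^*$ coming from $A\wedge A$, and collapses, via the unit-norm normalization of $\alpha_1$ imposed in \Cref{rem:invconnection}, to $\pm i(2-\tfrac12|\Phi|^2)\dvol_{S^2}$ on the diagonal. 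Finally, the mixed $\Lambda^1_\Sigma\otimes\Lambda^1_{S^2}$-piece is off-diagonal and built from $\nabla\Phi\otimes\alpha_1$, with only the $(1,0)$-component of $\nabla\Phi$ surviving by the holomorphicity $\delbar_\nabla\Phi=0$ that is part of the hypotheses of \Cref{lem:invconnection}. The first two pieces are precisely the diagonal entries recorded in \eqref{eq:FDn=1}; the third is the off-diagonal contribution that \eqref{eq:FDn=1} suppresses once the instanton equation is invoked, but here it must be retained because we are computing the full squared norm rather than the self-dual projection.

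Summing the three orthogonal contributions with the trace $\mathfrak{su}(2)$-norm convention—under which a diagonal matrix $\diag(iA,-iA)$ has squared norm $2|A|^2$—and using $F_\nabla = 2\,F_{\nabla^\cL}$ on $\cL^2$, I obtain exactly $|\cF_D|^2_{\widetilde g_X} = 4\big(|F_\nabla|^2_{g_\Sigma} + |\nabla\Phi|^2_{g_\Sigma} + \tfrac14(4-|\Phi|^2)^2\big)$, which combined with the conformal identity of the first paragraph yields \eqref{eq:Energy_Density_Instanton_In_Terms_Of_Vortices}. The quantity descends to a smooth function on $\Sigma$ because each summand is manifestly $\SO(3)$-invariant and the $S^2$-norms of $\dvol_{S^2}$ and $\alpha_1$ are constant. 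The only real obstacle is the bookkeeping of the various factors of two—the $\mathfrak{su}(2)$-trace normalization, the unit-norm condition on $\alpha_1$, and the rescaling $F_\nabla = 2F_{\nabla^\cL}$—but no new idea is required, as the identity is essentially the pointwise version of the integrated energy calculation already performed in the proof of \Cref{thm:vor}.
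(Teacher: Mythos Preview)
Your proposal is correct and follows essentially the same approach as the paper, which simply states that the lemma ``follows from a simple computation using the formula for $\cF_D$ given in the proof of \Cref{thm:vor} and the formula for the metric on the ES manifold given in \cref{eq:gsigma}.'' You have faithfully unpacked that computation---the conformal rescaling $g_X = 8m^2(\rho+1)^2\widetilde g_X$, the orthogonal decomposition of $\cF_D$ into $\Sigma$-, mixed-, and $S^2$-pieces, and the factor bookkeeping---so there is nothing to add beyond noting that your observation about the off-diagonal $\nabla\Phi\otimes\alpha_1$ term being needed for the full norm (even though it drops from the self-duality analysis in \eqref{eq:FDn=1}) is exactly the pointwise refinement of the integrated identity \eqref{eq:energiesequal}.
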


\begin{proof}
This follows from a simple computation using the formula for $\cF_D$ given in the proof of \Cref{thm:vor} and the formula for the metric on the ES manifold given in \cref{eq:gsigma}.
\end{proof}

\begin{rem}
Since the 4-vortex fields $(\nabla, \Phi)$, that we construct in this paper are all bounded, \cref{eq:Energy_Density_Instanton_In_Terms_Of_Vortices} gives at least quadratic curvature decay for all spherically symmetric $\SU (2)$ instantons on the ES manifold.
\end{rem}

Continuing to use \Cref{thm:vor}, our goal now is to fix a complex line bundle equipped with the Hermitian structure $H$, then use \Cref{cor:KW_Main} to produce 4-vortices on $\Sigma$. The next result identifies these and gives a restriction on $\Phi$ of any possible 4-vortex $(\nabla, \Phi)$.

\begin{lem}\label{lem:Degree}
Let $\cN$ be a holomorphic Hermitian vector bundle and $(\nabla, \Phi)$ a 4-vortex on $\cN \rightarrow \Sigma$. Then $\cN$ is isomorphic to the trivial holomorphic line bundle $\underline{\cx}$ equipped with the Hermitian structure $H (\Phi, \Psi) = \overline{\Phi} \Psi e^{-\alpha(z, \overline{z})}$, for a real valued $\alpha(z, \overline{z})$ satisfying
\begin{equation}
\deg (\nabla^H) = \frac{i}{2 \pi} \int\limits_\Sigma \del \delbar \alpha < 2.
\end{equation}
Under such an isomorphism $\Phi$ is a polynomial in $z$ of degree at most 1.
\end{lem}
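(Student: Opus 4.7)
The plan has three ingredients: holomorphic triviality of $\cN$, Bradlow's degree bound, and a potential-theoretic asymptotic for the Hermitian weight $\alpha$ in a distinguished gauge, combined with Liouville's theorem.

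First I would handle the structural claims. Since the chart $z$ of \cref{eq:holomap} identifies $\Sigma$ with $\cx$ as Riemann surfaces and $\Pic(\cx) = H^1(\cx,\cO^\times) = 0$, the bundle $\cN$ is holomorphically trivial. A nonvanishing global holomorphic section identifies $\cN \cong \underline{\cx}$, and any compatible Hermitian metric takes the form $H(\Phi,\Psi) = \bar\Phi\Psi e^{-\alpha}$ for a smooth real $\alpha$; the Chern connection reads $\nabla^H = d - \del\alpha$ with curvature $F_{\nabla^H} = \del\delbar\alpha$, yielding the stated formula for $\deg(\nabla^H)$. Applying \Cref{prop:Energy} with $\tau = 4$ and $\Vol(\Sigma,g_\Sigma) = 2\pi$ rearranges to $\deg(\nabla^H) = 2 - \tfrac{1}{4\pi}\|\Phi\|^2_{L^2} \leqslant 2$, with strict inequality whenever $\Phi \not\equiv 0$; the excluded case $\Phi \equiv 0$ corresponds to the reducible abelian solutions already covered by the second bullet of \Cref{thm:vor}. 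The trivializing section is further normalized using the gauge freedom $s \mapsto e^h s$ for entire $h$, which shifts $\alpha$ by $-2\Re h$: since every real harmonic function on $\cx$ is the real part of an entire function (Cauchy--Riemann), this freedom can be used to eliminate any harmonic contribution to $\alpha$.

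Next I would pin down the pointwise asymptotic of $\alpha$ via potential theory and close with Liouville. In the isothermal representation $g_\Sigma = e^{-\phi}|dz|^2$ from \cref{eq:isothermal}, the vortex identity $2i\Lambda F_{\nabla^H} = 4 - |\Phi|^2_H$ becomes the Euclidean equation $\Delta_{\rl^2}\alpha = e^{-\phi}(4 - |\Phi|^2_H) =: f$. The source $f$ is nonnegative by Taubes' bound $|\Phi|^2_H \leqslant 4$, smooth, and decays faster than $|z|^{-2}$ at infinity (since $\phi \sim 2\log|z|$ as $|z|\to\infty$), so $f$ has all finite moments, with total mass $\int_{\rl^2} f\,dxdy = \int_\Sigma (4 - |\Phi|^2_H)\dvol_\Sigma = 4\pi\deg(\nabla^H)$. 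Fixing the trivialization so that $\alpha$ equals the Newtonian potential $N(z) = \tfrac{1}{2\pi}\int \log|z-w|\,f(w)\,dw$ of $f$, the expansion $\log|z-w| = \log|z| + O(|w|/|z|)$ yields the pointwise asymptotic $\alpha(z) = 2\deg(\nabla^H)\log|z| + O(|z|^{-1})$ as $|z| \to \infty$. In this gauge, $\Phi$ is entire on $\cx$ (from $\delbar_\nabla\Phi = 0$) and Taubes delivers $|\Phi(z)|^2 \leqslant 4 e^{\alpha(z)} \leqslant C(1 + |z|)^{2\deg(\nabla^H)}$; since $\deg(\nabla^H) < 2$, Cauchy's estimates force $\Phi$ to be a polynomial of degree at most $\lfloor \deg(\nabla^H) \rfloor \leqslant 1$.

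The step I expect to be the main obstacle is verifying the sharp Newtonian pointwise asymptotic for $\alpha$: one needs the explicit polynomial decay of $f$ at the parabolic end of $\Sigma$ (which follows from the form of $\phi$ in \cref{eq:isothermal}) together with standard dominated-convergence estimates controlling the remainder in the $\log|z-w|$ expansion, and one also needs to justify that the gauge-normalization $\alpha = N$ is indeed attainable globally on $\cx$. Once the pointwise logarithmic growth of $\alpha$ is in hand, the Cauchy estimates close the argument immediately.
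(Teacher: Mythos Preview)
Your overall architecture---holomorphic triviality, the Bradlow/energy bound from \Cref{prop:Energy}, Taubes' bound $|\Phi|^2_H\leqslant 4$, then a growth estimate feeding into Liouville---is the same as the paper's. The structural paragraph is fine.

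There is, however, a concrete error in the potential-theoretic paragraph. From \cref{eq:isothermal} one has $e^{-\phi}=\tfrac{2}{(1+\rho)^3 e^\rho}$, and since $|z|^2=\rho e^\rho$ this gives $e^{-\phi}\sim \tfrac{1}{2|z|^2(\log|z|)^2}$ as $|z|\to\infty$. So $f=e^{-\phi}(4-|\Phi|^2_H)$ is \emph{only logarithmically} better than $|z|^{-2}$, and in $\rl^2$ this means $\int_{\rl^2}|z|^k f\,dxdy$ diverges for every $k>0$; even $\int\log|w|\,f(w)\,dw=+\infty$. Hence the Newtonian potential $N(z)=\tfrac{1}{2\pi}\int\log|z-w|f(w)\,dw$ does not converge, the gauge choice ``$\alpha=N$'' is unavailable, and the asserted remainder $O(|z|^{-1})$ cannot hold. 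Your own flagged ``main obstacle'' is in fact an actual obstruction.

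The repair is to bypass the pointwise estimate altogether and work with circular means, which only needs $f\in L^1$. Writing $M(r)=\tfrac{1}{2\pi}\int_0^{2\pi}\alpha(re^{i\theta})\,d\theta$, the divergence theorem gives $rM'(r)=\tfrac{1}{2\pi}\int_{|z|<r}f\leqslant \tfrac{1}{2\pi}\|f\|_{L^1}=2\deg(\nabla)$, hence $M(r)\leqslant C+2\deg(\nabla)\log r$. Combining with $\log|\Phi|\leqslant \tfrac{1}{2}\alpha+\log 2$ and Jensen's formula yields, for every $r_0$,
\[
n(r_0)\,\log(r/r_0)\ \leqslant\ \int_{r_0}^r \frac{n(t)}{t}\,dt\ \leqslant\ \deg(\nabla)\log r + C,
\]
and letting $r\to\infty$ gives $n(r_0)\leqslant \deg(\nabla)<2$. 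Thus $\Phi$ has at most one zero; Hadamard factorisation writes $\Phi=c(z-z_0)^k e^{g}$ with $k\leqslant 1$ and $g$ entire, and the holomorphic regauging $s\mapsto e^{g}s$ (which shifts $\alpha$ by a harmonic function, leaving $\partial\delbar\alpha$ and hence the degree unchanged) turns $\Phi$ into a polynomial of degree $\leqslant 1$. This is precisely what the paper abbreviates as ``standard complex analytic arguments''; your route and the paper's coincide once the Newtonian-potential shortcut is replaced by this Jensen/circular-mean argument.
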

\begin{proof}
Note that any bundle over $\Sigma$ has to be topologically trivial. Moreover, since $H^1(\Sigma, \mathcal{O}_\Sigma)$ is trivial, any holomorphic structure is equivalent to the standard one which we now fix and denote by $\delbar$. Holomorphic sections, then simply correspond to entire functions in the coordinate $z \in \cx$ from \cref{eq:holomap}. Now, set $H_0$ to be the Hermitian structure whose associated Chern connection $\nabla^{H_0}$ is the trivial flat connection, that is given two complex valued functions $\Phi, \Psi$ we have $H_0 (\Phi, \Psi) = \overline{\Phi} \Psi$. Any other Hermitian structure can be written as $H = e^{- \alpha (z, \overline{z})} H_0$, for a real valued function $\alpha$. The associated Chern connection is $\nabla^H = \nabla^{H_0} - \del \alpha$. Note that by construction $\nabla^{H_0}$ and $\nabla^H$ induce the same holomorphic structure, and
\begin{equation}
\deg (\nabla^H) = \frac{i}{2 \pi} \int\limits_\Sigma \del \delbar \alpha.
\end{equation}
The first thing to notice is that, from \cite{T80}*{Lemma~3.1}, any finite energy $\tau$-vortex field, $(\nabla, \Phi)$, satisfies $\vert \Phi \vert_{H}^2 \leqslant \tau$ (recall from \Cref{thm:vor} that in in our case $\tau=4$). Hence, its growth rate must be (at most) $| \Phi (z)|^2 = O (e^{\alpha (z, \overline{z})})$. Standard complex analytic arguments show that when $\vert \Phi (z) \vert$ is not $O (|z|^m)$, then the degree $\deg(\nabla^H)$ is greater than $m$. Thus, by \cref{eq:degbound}, we have that $\vert \Phi (z) \vert$ must grow slower than $|z|^2$. We now apply a complex gauge transformation to $(\nabla, \Phi)$ so that the resulting connection is compatible with $\delbar$. This has the consequence that $\Phi$ changes to $\Phi_0$, which is a holomorphic function differing from $\Phi$ by multiplication with a non-zero uniformly bounded function. Hence $|\Phi_0(z)|$ must also grow strictly slower than $z^2$, thus is either constant, or a first order polynomial.
\end{proof}

\smallskip

We now put all the previous work together to finalize the proof the \hyperlink{main:sch}{Main Theorem}.

\begin{proof}[The proof of the \hyperlink{main:sch}{Main Theorem}]
First we give a converse to \Cref{lem:Degree} using \Cref{cor:KW_Main} in the case of $\cN = \cL^2$. For that we shall start with a polynomial $\Phi_0$, as in \Cref{lem:Degree}, and an $H$-Hermitian connection $\nabla^H$ on the trivial bundle over $\Sigma$. Then we must show, using \Cref{cor:KW_Main}, that in a complex gauge orbit of $(\nabla^H, \Phi_0)$ there is a unique 4-vortex. We split the proof in two cases according to whether $\Phi_0$ is a constant or a degree 1 polynomial in $z$. In fact, having in mind \Cref{rem:Up_To_Constant} we may pick this polynomial to be monic.

Suppose that $\Phi_0 \equiv 1$. Then in the notation of \Cref{lem:Degree}, and with no loss of generality, we may let $\alpha = E \rho$, with $E \geqslant 0$. Then $i \Lambda \del \delbar \alpha$ and $\vert \Phi_0 \vert_H^2$ are both bounded and
\begin{equation}
\deg (\nabla^H) = E.
\end{equation}
Hence, \Cref{cor:KW_Main} applies and gives a 4-vortex $(\nabla^E, \Phi^E)$ provided that 
\begin{equation}
E < \frac{ 4 \Vol(\Sigma, g_\Sigma)}{4 \pi} = 2.
\end{equation}
This in turn gives rise, via the third bullet of \Cref{thm:vor}, to an instanton $D$ on the ES manifold whose energy is given by $E_{\YM}(D) = E$.

When $E = 0$, the corresponding instanton is the unique flat connection, hence reducible. To analyse the case when $E>0$, recall that $\Phi^E = e^f \Phi_0$ for a uniformly bounded function $f$. Hence, when $E > 0$, the corresponding vortex $(\nabla^E, \Phi^E)$, is such that
\begin{equation}
|\Phi^E|^2_H = O (e^{- E \rho}).
\end{equation} 
This means, by the 4-vortex \cref{eq:vor1B}, that is $2i\Lambda F_{\nabla^E} = 4 - |\Phi^E|^2_H$, we have that $i \Lambda F_{\nabla^E} \rightarrow 2 \neq 0$, as $\rho \rightarrow \infty$. Thus, further using \cref{eq:Energy_Density_Instanton_In_Terms_Of_Vortices} in \Cref{lem:Energy_Density_Instanton_In_Terms_Of_Vortices} we conclude that the corresponding instantons have quadratic curvature decay.

We now turn to the second case, that is when $\Phi_0 (z)$ is a degree 1-polynomial, say $\Phi_0(z) = z - z_0$, for some $z_0 \in \cx$. Again, with no loss of generality, we set $\alpha = E \rho + \ln (\rho + 1)$, then we have
\begin{align}
i \Lambda F_{\nabla^H} &= 2 (E - 1) + \frac{4}{\rho + 1},  \\
\vert \Phi_0 \vert^2_H &= O (e^{- (E - 1) \rho}),  \\
\deg (\nabla^H) &= E.
\end{align}
Thus, as long as $E \in [1,2)$, the conditions of \Cref{cor:KW_Main} hold true.  Moreover, using \cref{eq:holomap} and the fact that $f$ is bounded, we get that the corresponding vortex $(\nabla^E, \Phi^E)$ satisfies
\begin{equation}
|\Phi^E|^2_H = O (e^{- (E - 1) \rho}).
\end{equation}
Hence, as in the previous case, \cref{eq:vor1B} implies that $i \Lambda F_{\nabla^E} \rightarrow 2 \neq 0$, as $\rho \rightarrow \infty$, if $E > 1$. Once again, this shows that for $E \in (1,2)$ the corresponding instantons have exactly quadratic curvature decay.  When $E = 1$, then the corresponding vortex $(\nabla, \Phi)$, is such that 
\begin{equation}
|\Phi|^2_H = \frac{|z - z_0|^2}{(\rho + 1) e^{\rho}} e^{2f},  \label{eq:phinorm}
\end{equation}
where $f$ is the smooth and bounded solution of the Kazdan--Warner \cref{eq:KW2}, with $g = 2 - i \Lambda F_{\nabla^H}$ and $h = \tfrac{|z - z_0|^2}{(\rho + 1) e^{\rho}}$. By the results of \Cref{sec:Kazdan--Warner}, $f$ extends smoothly over $S^2 = \Sigma \cup \{ \infty \}$. Hence, $f (\infty) = \lim_{\rho \rightarrow \infty} f (\rho)$ exists, and in the same limit we have $|\Phi|^2_H \rightarrow e^{2 f (\infty)}$. Now recall that the metric $\kappa g_\Sigma$, where $\kappa$ is given by \cref{eq:conformalfactor} also extends over $S^2$ continuously, moreover, since in fact $g$ and $h$ extend smoothly, so does $f$. Thus since $\kappa = O ((\rho + 1) e ^{- \rho})$, so is $|df|_{g_\Sigma}$.  In particular,  $f$ converges exponentially fast on the end of $\Sigma$.  As every term in \cref{eq:KW2} converges uniformly in the limit $\rho \rightarrow \infty$, so does $\Delta f$.  Denote the limits by $l$.  For each $\epsilon > 0$, pick $\rho_\epsilon$, such that $\rho > \rho_\epsilon>$ implies $l - \epsilon \leqslant \Delta f$.  Let $\Sigma_\epsilon = \{ \rho > \rho_\epsilon \}$.  Then, using \Cref{lem:Integral_Laplacian_Vanishes} and the bound on $df$, we get
\begin{equation}
(l - \epsilon) \frac{2 \pi}{\rho_\epsilon + 1} \leqslant \int\limits_{\Sigma_\epsilon} (l - \epsilon) \dvol_\Sigma \leqslant \int\limits \Delta f \dvol_\Sigma \leqslant \int\limits_{\partial \Sigma_\epsilon} \ast df = O \left(\frac{e ^{- \rho_\epsilon}}{\rho_\epsilon + 1} \right).
\end{equation}
As $\epsilon \rightarrow 0^+$, we can assume that $\rho_\epsilon \rightarrow \infty$, thus the limit is non-positive.  Similar argument shows that it is also non-negative, hence $l = 0$. Using this, \cref{eq:KW2,eq:phinorm}, and that $f$ converges exponentially fast to $f (\infty)$ as $\rho \rightarrow \infty$, we get that $|\Delta f|$ also has exponential decay.  By \cref{eq:KW2,eq:FDn=1} this gives cubic curvature decay for the corresponding instanton.

\smallskip

Next we address the claim about the Uhlenbeck compactness of $\cM_{X,E}^{\SO (3)}$.  Without loss of generality, it is enough to investigate the convergence properties of sequences, $\{ [D_n] \}_{n \in \N}$, of instantons in $\cM_{X,E}^{\SO (3)} \backslash \{ \infty \}$.  Call $(\nabla_n, \Phi_n)$ the vortex fields corresponding to $D_n$.  Using the action of the isometry group, $\SO (2)$, of $\Sigma$, and the compactness of the circle, we can also assume, that the divisor of $(\nabla_n, \Phi_n)$ has coordinates $t = 0$, and $\rho = \rho_n \in [0, \infty)$. To prove that $\cM_{X,E}^{\SO (3)}$ is connected and compact in the weak $L_1^2$-topology, and the claim about the canonical map, it is enough to prove that if $\rho_n \rightarrow \rho_\infty \in [0, \infty]$, then there is a sequence of gauge transformations, $\gamma_n$, such that the sequence, $\{ \gamma_n (D_n) \}_{n \in \N}$, converges weakly in $L_1^2$ to the spherically symmetric $\SU (2)$ instanton, whose corresponding vortex field has divisor at $t = 0$ and $\rho = \rho_\infty$, where $\rho_n = \infty$ means the divisor ``at infinity'' in $S^2$.  Since $\{ [D_n] \}_{n \in \N}$ is bounded in $L_1^2$, it is enough to show pointwise convergence of $\cF_{D_n}$ on compact sets, and by construction, it is enough to prove the analogous claim on the level of vortices.  Let $H$ and $\nabla^H$ be as above, and for each $n \in \N$ write $\nabla_n = \nabla^H + \del f_n - \delbar f_n$ and $\Phi_n = e^{f_n} \tfrac{z - \rho_n}{\sqrt{1 + \rho_n^2}}$.  The corresponding $g_n$ and $h_n$ are
\begin{align}
g_n &= 4 - 2E - \frac{4}{\rho + 1},  \\
h_n &= \frac{|z - \rho_n|^2}{(1 + \rho_n^2)(\rho + 1) e^{E \rho}}.
\end{align}
Note that $g_n$ is independent of $n$.  Since $\{ \rho_n \}_{n \in \N}$ is either convergent or goes to infinity, we can pick a point $p \in \Sigma$ and $\epsilon$ as in \Cref{lem:Difference_F}. Let
\begin{equation}
h_\infty = \left\{ \begin{array}{cc}
\frac{|z - \rho_\infty|^2}{(1 + \rho_\infty^2)(\rho + 1) e^{E \rho}}, & \textnormal{when } \rho_\infty \neq \infty,  \\
\frac{1}{(\rho + 1) e^{E \rho}}, & \textnormal{when } \rho_\infty = \infty.
\end{array} \right.
\end{equation}
Since $\{ h_n \}_{n \in \N}$ is convergent to $h_\infty$ in $L^2$ of any compact sets, we get local $L_2^2$ convergence of $\{ f_n \}_{n \in \N}$ to $f_\infty$, which in turn implies the weak $L_1^2$ convergence of $\{ D_n \}_{n \in \N}$ to $D_\infty$.

\smallskip

What we have shown so far is that the map given by our construction above is a continuous surjection from $S^2$ to $\cM_{X,E}^{\SO (3)}$.  In the final step of this proof, we show that it is also injective, that is the instantons corresponding to different points (degree 1 effective divisors) are not gauge equivalent.

Let $(\nabla, \Phi)$ be the 4-vortex fields corresponding to $z_0 \in S^2$.  By the spherical symmetry of $D$, the Yang--Mills energy density $\tfrac{1}{8 \pi^2} |\cF_D|^2$ descends to $\Sigma$.  Moreover, using \cref{eq:vor1B,eq:vor2B,eq:Energy_Density_Instanton_In_Terms_Of_Vortices}, we have the following equation on $\Sigma$
\begin{equation}
(\Delta_\Sigma + 4) |\Phi|^2 = 16 - 16 m^4 (\rho + 1)^4 |\cF_D|^2
\end{equation}
Let $G$ be the Green's function of the non-degenerate, positive, elliptic operator $\Delta_\Sigma + 4$.  Now, using $\int_\Sigma G(-,x) \dvol_\Sigma (x) = \tfrac{1}{4}$, we have
\begin{equation}
|\Phi|^2 = 4 - 16 m^4 \int\limits_\Sigma G (-, x) (\rho (x) + 1)^4 |\cF_D (x)|^2 \dvol_\Sigma (x).  \label{eq:density}
\end{equation}
Since the right hand side of \cref{eq:density} is gauge invariant, so is the left hand side.  The equation $\Phi = 0$ holds exactly at $z_0$, by the construction, thus the instanton corresponding to different points in $S^2$ are not gauge equivalent.  This concludes the proof.
\end{proof}

\smallskip

\begin{rem} \label{rem:end}
We end this section with a few important remarks:

\begin{enumerate}
\item The Charap--Duff instanton appears in this picture as the origin of $\cx  \subset \cM_{X,1}^{\SO (3)}$.

\item All the instantons corresponding to the points $\lbrace 0 , \infty \rbrace \in  \cx \cup \lbrace \infty \rbrace \cong \cM_{X,E}^{\SO (3)}$ are also static, that is they are invariant under the whole isometry group $\mathrm{S} (\O (2) \times \O (3))$.

\item When $D \notin \lbrace 0, \infty \rbrace$, the left hand side of \cref{eq:density} is not static, so neither is $\mathbb{E}_{\YM} (D)$.  Thus all instanton in $\cM_{X,E}^{\SO (3)} \backslash \{ 0, \infty \}$ are {\em not static}.  This disproves a conjecture of Tekin in \cite{T02}.

\item In this picture, and for $E \in [1,2)$ the canonical map
\begin{equation}
\cM_{X,E}^{SO(3)} \rightarrow \cx \cup \lbrace \infty \rbrace = S^2,
\end{equation}
is nothing but the map which to an spherically symmetric instanton $D$ assigns the divisor of the associated vortex $(\nabla, \Phi)$. 
\end{enumerate}
\end{rem}

\bigskip

\section{Directions for further work}

In this final section we mention a few research directions that our work opens up. 

\smallskip

\subsubsection*{Does $\cM_{X,1}$ have other connected components?}

The (connected component of the) isometry group, $\SO (2) \times \SO (3)$, acts on the trivial $\SU (2)$-bundle over the ES manifold. It induces an action on the space of connections, which preserves the ASD equations. Thus, inducing an action on $\cM_{X,1}$. We shall explain below how, by studying this action, one may be able to conclude that the universal cover $\widetilde{\cM}_{X,1}$ of the Uhlenbeck compactified moduli space is homeomorphic to a disjoint union of 2-spheres $S^2$'s
\begin{equation}
\widetilde{\cM}_{X,1} \cong S^2 \sqcup \ldots \sqcup S^2,
\end{equation}
and determine the number of connected components.

From the results of Etesi-Jardim in \cite{EJ08}, any connected component of the Uhlenbeck compactified moduli space must be 2-dimensional. Our results state that the instantons in $\cM_{X,1}$ fixed by $\lbrace 1 \rbrace \times \SO (3)$ form a connected component homeomorphic to $S^2$, hence $\lbrace 1 \rbrace \times \SO (3)$ must act without fixed points on any other component of $\cM_{X,1}$. Supposing this action to be continuous, we conclude that any such component (if non-empty) must be homeomorphic to (a quotient of) $S^2$, with $\lbrace 1 \rbrace \times \SO (3)$ acting transitively.

\smallskip

\subsubsection*{The $L^2$-metric on the moduli space}

It follows from the proof of the \hyperlink{main:sch}{Main Theorem}, that the spherically symmetric part of the unit energy Etesi--Jardim moduli space is connected (albeit non-compact) in the strong $L_1^2$-topology. Then, simple computation which we omit here, shows that the canonical map from $\Sigma$ is in fact a biholomorphism with respect to the complex structure \eqref{eq:holomap} and the $L^2$-complex structure on the moduli space.

A further question, which seems to be more difficult, is that of computing the $L^2$-metric in the moduli space. One can immediately see, that this metric is compatible with the $L^2$-complex structure, thus it differs from $g_\Sigma$ by a conformal transformation.  Since we established a connection with $\cM_{X,1}^{\SO (3)}$ and a vortex moduli space on $\Sigma$, the methods of \cite{N17a} could potentially be used to understand (at least) the asymptotic behavior of the $L^2$-metric. This computation could shed light on the $L^2$-cohomology of the moduli space, and thus serve as an example for the several Sen-type conjectures.

\smallskip

\subsubsection*{Application to other geometries}

It is plausible that similar techniques, exploring the correspondence between instantons and vortices, may be successfully applied to study instantons on other geometries.  Obvious candidates come from physics:  the Euclidean Reissner--Nordstr\"om--de Sitter manifold describes a charged black hole of mass $m\in \rl_+$, electric charge $Q \in \rl$, and cosmological constant $\Lambda \in \rl$.  The base manifold is still $\rl^2 \times S^2$, with the Riemannian metric
\begin{equation}
g_{RNdS} = \left( 1 - \tfrac{2 m}{r} + \tfrac{Q^2}{r^2} - \tfrac{\Lambda r^2}{3} \right) d \tau^2 + \frac{1}{1 - \tfrac{2 m}{r} + \tfrac{Q^2}{r^2} - \tfrac{\Lambda r^2}{3}} dr^2 + 2 r^2 g_{S^2}.
\end{equation}
The metric $g_{NRdS}$ is an {\em electro-vacuum}, that is it solves the Euclidean Einstein--Maxwell equations with cosmological constant equal to $\Lambda$.  The (identity component of the) isometry group of $g_{RNdS}$ is also $\SO (2) \times \SO (3)$, and when $Q = \Lambda = 0$, then $(\rl^2 \times S^2, g_{RNdS})$ reduces to the ES manifold.  Note that when charge is non-zero, then $g_{NRdS}$ is not Einstein, and when the cosmological constant is non-zero, then $g_{NRdS}$ is not ALF.

However, our method cannot be used on the Euclidean Kerr manifold, which corresponds to a ``spinning'' black hole with non-zero angular momentum, and it is a non-compact, complete, Ricci flat ALF manifold with $\SO (2) \times \SO (2)$ isometry only.

\smallskip

\subsubsection*{Relation to BPS monopoles and Hitchin's equation}

In connection with the first research direction mentioned above is the question of studying static, that is $\SO (2) \times \lbrace 1 \rbrace$ invariant instantons. This could not only lead to the discovery of new instantons, but also serve as a way to determine the number of connected components of $\cM_{X,1}$, as mentioned before. 

Straightforward computations show that these static instantons can be interpreted as BPS monopoles on a ``cylinder" $\rl \times S^2$ equipped with a metric which is asymptotically conical at one end and asymptotically hyperbolic in the other.

One could also impose $\SO (2) \times \SO (2) \subset \mathrm{S} (\O (2) \times \O (3)) \cong \Isom_+ (X, g_X)$ invariance. In this case, the instantons correspond to solutions of an equation similar to Hitchin's equations.

Understanding monopoles and Hitchin systems (in the sense described above) has the potential to be applicable to the Euclidean Kerr manifold, unlike the methods of this paper.

\smallskip

\subsubsection*{Non-minimal Yang--Mills and Ginzburg--Landau fields}

As in \cite{T80}*{Section~V.}, the correspondence between instantons and vortices extends to a pairing between spherically symmetric solutions of the 4-dimensional (pure) $\SU (2)$ Yang--Mills equations and the 2-dimensional (critically coupled) Abelian Ginzburg--Landau equations, and this pairing preserves energy, in the sense of \cref{eq:energiesequal}.  Thus one can get information about non-minimal Yang--Mills fields on the ES manifold via understanding the Ginzburg--Landau equations on $\Sigma$, or vice versa.

\bibliography{refs}

\end{document}